\newtheorem{thm}{Theorem}[section]
\newtheorem{lemma}[thm]{Lemma}
\newtheorem{cor}[thm]{Corollary}
\newtheorem{prop}[thm]{Proposition}
\theoremstyle{definition}
\newtheorem{remark}[thm]{Remark}
\def\Xint#1{\mathchoice
      {\XXint\displaystyle\textstyle{#1}}%
      {\XXint\textstyle\scriptstyle{#1}}%
      {\XXint\scriptstyle\scriptscriptstyle{#1}}%
      {\XXint\scriptscriptstyle\scriptscriptstyle{#1}}%
                   \!\int}
\def\XXint#1#2#3{{\setbox0=\hbox{$#1{#2#3}{\int}$}
         \vcenter{\hbox{$#2#3$}}\kern-.5\wd0}}
\def\dashint{\Xint-}
\def\R{\mathbb{R}}
\def\e{\varepsilon}
\numberwithin{equation}{section}
\begin{document}

\title{Asymptotic Expansions of Fundamental Solutions \\  in Parabolic Homogenization}

\author{Jun Geng\thanks{Supported in part  by the NNSF of China (no. 11571152) and Fundamental Research Funds for the Central
Universities (lzujbky-2017-161).} \qquad
Zhongwei Shen\thanks{Supported in part by NSF grant DMS-1600520.}}
\date{}
\maketitle

\begin{abstract}

For a family of second-order parabolic systems with rapidly oscillating and time-dependent periodic coefficients,
 we investigate the asymptotic behavior of fundamental solutions and establish sharp estimates for the remainders.

\end{abstract}

\section{\bf Introduction}\label{section-1}

In this paper we study the asymptotic behavior of fundamental solutions
$\Gamma_\e (x, t; y, s)$ for a family of second-order parabolic operators $\partial_t+\mathcal{L}_\varepsilon$ with
rapidly oscillating and time-dependent periodic coefficients.
Specifically, we consider
\begin{equation}\label{elliptic operator}
\mathcal{L}_\varepsilon=-\text{div}\left(A\big({x}/{\varepsilon},{t}/{\varepsilon^2}\big)\nabla
\right)
\end{equation}
in $\R^d \times \R$, where $\e>0$ and $A(y,s)= \big(a_{ij}^{\alpha\beta} (y,s)\big)$
with $1\le i, j\le d$ and $1\le \alpha, \beta \le m$.
Throughout the paper we will assume that the coefficient matrix
$A=A(y,s)$ is real, bounded measurable  and satisfies the ellipticity condition,
\begin{equation}\label{ellipticity}
\| A\|_\infty  \le \mu^{-1} \quad \text{ and }\quad
\mu |\xi|^2  \le  a^{\alpha\beta}_{ij}(y,s)\xi_i^\alpha
\xi^\beta_j
\end{equation}
 for any 
$\xi=(\xi_i^\alpha ) \in \mathbb{R}^{m\times d} \text{ and  a.e. }  (y,s)\in \mathbb{R}^{d+1}$,
where $\mu >0$.
We also assume that $A$ is 1-periodic; i.e.,  
\begin{equation}\label{periodicity}
A(y+z,s+t)=A(y,s)~~~\text{ for }(z,t)\in \mathbb{Z}^{d+1}\text{ and a.e. }(y,s)\in \mathbb{R}^{d+1}.
\end{equation}
Under these assumptions it is known that as $\e\to 0$, the operator 
 $\partial_t +\mathcal{L}_\e$ 
G-converges to a parabolic operator $\partial_t +\mathcal{L}_0$ with 
constant coefficients \cite{BLP-2011}.

In the scalar case $m=1$,
it follows from a celebrated  theorem of John Nash \cite{Nash} that local solutions of
$(\partial_t +\mathcal{L}_\e)u_\e=0$ are H\"older continuous.
More precisely, if $(\partial_t +\mathcal{L}_\e)u_\e=0$ in $Q_{2r}=Q_{2r}(x_0, t_0)$ for some
$(x_0, t_0)\in \R^{d+1}$ and $0<r<\infty$,
where 
\begin{equation}\label{Q-r}
Q_r(x_0, t_0)=B(x_0, r)\times (t_0-r^2, t_0),
\end{equation}
then there exists some $\sigma\in (0,1)$, depending only on $d$ and $\mu$, such that
\begin{equation}\label{H-0}
\| u_\e\|_{C^{\sigma, \sigma/2}(Q_r)}
\le C r^{-\sigma} \left(\frac{1}{|Q_{2r}|} \int_{Q_{2r}} |u_\e|^2 \right)^{1/2},
\end{equation}
where  $C>0$ depends only on $d$ and $\mu$.
In particular, $C$ and $\sigma$ are independent of $\e>0$.
The periodicity assumption (\ref{periodicity}) is not needed here.
It follows that the fundamental solution $\Gamma_\e (x, t; y,s)$ for $\partial_t+\mathcal{L}_\e$
exists and satisfies the Gaussian estimate
\begin{equation}\label{f-0-1}
| \Gamma_\e (x, t; y, s)|\le 
 \frac{C}{ (t-s)^{\frac{d}{2}}} \exp\left\{ -\frac{\kappa |x-y|^2}{t-s}\right\}
\end{equation}
for any $x, y\in \R^d$ and $-\infty<s<t<\infty$, where $\kappa>0$ depends only on $\mu$ and
$C>0$ depends on $d$ and $\mu$ (also see \cite{Aronson, Fabes-1986} for lower bounds).

If $m\ge 2$, the global H\"older estimate (\ref{H-0}) for $1<r<\infty$ was established
recently in \cite{Geng-Shen-2015} for any $\sigma \in (0,1)$
under the assumptions that
$A$ is elliptic, periodic, and $A\in \text{VMO}_x$
(see (\ref{VMO-x-0}) for the definition of $\text{VMO}_x$).
We mention that the local H\"older estimate for $0<r<\e$ without the periodicity condition 
was obtained earlier in \cite{Byun-2007, Krylov-2007}.
Consequently,  by \cite{Hofmann-2004, Dong-2008}, 
the matrix of fundamental solutions
$\Gamma_\e (x, t; y, s)=\big(\Gamma_\e^{\alpha\beta} (x, t; y, s)\big)$,
with $1\le \alpha, \beta\le m$, exists and satisfies the estimate (\ref{f-0-1}),
where $\kappa>0$ depends only on $\mu$.
The constant $C>0$ in (\ref{f-0-1}) depends on $d$, $m$, $\mu$ and
the function $A^\# (r)$ in (\ref{VMO-x}), but not on $\e>0$.

The primary purpose of this paper is to study the asymptotic behavior, as $\e\to 0$,  of
$\Gamma_\e (x, t; y, s)$, 
$\nabla_x\Gamma_\e (x, t; y,s)$,
$\nabla_y \Gamma_\e (x, t; y, s)$, 
and $\nabla_x\nabla_y \Gamma_\e (x, t; y, s)$.
Our main results extend the analogous estimates for elliptic operators
$-\text{\rm div}\big(A(x/\e)\nabla \big)$ in \cite{AL-1991, KLS-2014} to the parabolic setting.
As demonstrated in the elliptic case \cite{KS-2011}, 
the estimates in this paper open the doors for the use of layer potentials in solving initial-boundary value problems
for the parabolic operator $\partial_t +\mathcal{L}_\e$ with 
sharp estimates that are uniform in $\e>0$.

Let $\Gamma_0 (x,t; y, s)$ denote the matrix of fundamental solutions for the
homogenized operator $\partial_t +\mathcal{L}_0$.
Our first result provides the sharp estimate for 
$\Gamma_\e -\Gamma_0$.

\begin{thm}\label{main-theorem-1}
Suppose that the coefficient matrix $A$ satisfies conditions (\ref{ellipticity}) and (\ref{periodicity}).
If $m\ge 2$, we also assume that $A\in \text{\rm VMO}_x$.
Then
\begin{equation}\label{f-0-3}
|\Gamma_\e (x, t; y, s)-\Gamma_0 (x, t; y, s)|
\le \frac{C\e }{(t-s)^{\frac{d+1}{2}}}
\exp\left\{-\frac{\kappa |x-y|^2}{ t-s}\right\}
\end{equation}
for any $x, y\in \R^d$ and $-\infty<s<t<\infty$,
where $\kappa>0$ depends only on $\mu$.
The constant $C$ depends on $d$, $m$, $\mu$, and  $A^\#$ (if $m\ge 2$). 
 \end{thm}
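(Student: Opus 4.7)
The strategy is a duality argument that reduces the pointwise bound on $\Gamma_\e-\Gamma_0$ to a quantitative $L^2$ convergence-rate estimate in parabolic homogenization. Fix $(x_0,t_0)$ and $(y_0,s_0)$ with $T:=t_0-s_0>0$, and set $r=\sqrt{T}/8$. Choose nonnegative smooth bumps $\phi,\psi$ of integral $1$ supported in the parabolic cylinders $Q_r(x_0,t_0)$ and $Q_r(y_0,s_0)$ (the latter shifted slightly in time so that the two time supports do not overlap). By the uniform interior H\"older estimate (\ref{H-0}) applied to $\Gamma_\e-\Gamma_0$ in each of its argument pairs (valid in the scalar case by Nash and in the vector case via \cite{Geng-Shen-2015} under the $\text{\rm VMO}_x$ assumption), the quantity $|\Gamma_\e(x_0,t_0;y_0,s_0)-\Gamma_0(x_0,t_0;y_0,s_0)|$ is controlled by the smeared average
\[
|I_\e|:=\Big|\iiiint \phi(x,t)\bigl(\Gamma_\e-\Gamma_0\bigr)(x,t;y,s)\,\psi(y,s)\,dx\,dt\,dy\,ds\Big|
\]
plus a H\"older oscillation error of size $O(\e\,T^{-(d+1)/2})$ times the target Gaussian factor. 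Writing $u_\e,u_0$ for the solutions of $(\partial_t+\mathcal{L}_\e)u_\e=\psi$ and $(\partial_t+\mathcal{L}_0)u_0=\psi$ with zero data at $-\infty$, one has $I_\e=\iint\phi\,(u_\e-u_0)$, and the problem reduces to bounding $u_\e-u_0$.

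The second step uses the parabolic two-scale expansion. Introduce
\[
w_\e(x,t):=u_\e(x,t)-u_0(x,t)-\e\,\chi^\beta(x/\e,t/\e^2)\,S_\e(\partial_{x_\beta}u_0)(x,t),
\]
where $\chi^\beta$ are the parabolic correctors (periodic solutions of the cell problems for $\partial_s+\mathcal{L}$) and $S_\e$ is a Steklov smoothing at scale $\e$ used to tame the singularity of $\nabla u_0$ near $\operatorname{supp}(\psi)$. Using the flux corrector, i.e.\ the mean-zero periodic antiderivative of $A(I+\nabla\chi)-\widehat A$, the equation for $w_\e$ takes divergence form $(\partial_t+\mathcal{L}_\e)w_\e=\operatorname{div}G_\e$ with $\|G_\e\|_{L^2}\lesssim \e\,\|\nabla^2 u_0\|_{L^2}$ on the relevant domain. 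Parabolic energy estimates then yield $\|w_\e\|_{L^2}\lesssim \e\,\|\nabla u_0\|_{L^2}$. Because $\mathcal{L}_0$ has constant coefficients, $\Gamma_0$ enjoys Gaussian bounds on all derivatives, $|\nabla_x^k\nabla_y^l\Gamma_0(x,t;y,s)|\le C_{k,l}(t-s)^{-(d+k+l)/2}e^{-\kappa|x-y|^2/(t-s)}$. Converting the $L^2$ bound on $w_\e$ to pointwise form via interior regularity for $\mathcal{L}_0$, and noting that the explicit corrector term $\e\chi\,S_\e(\nabla u_0)$ has size $\lesssim \e\,T^{-(d+1)/2}$ times the Gaussian (since $\chi$ is bounded), produces $|I_\e|\lesssim \e\,T^{-(d+1)/2}e^{-\kappa|x_0-y_0|^2/T}$.

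The principal obstacle is producing the exponential factor $\exp(-\kappa|x_0-y_0|^2/T)$ in the off-diagonal regime $|x_0-y_0|^2\gg T$, since a plain energy estimate supplies no such decay. My plan is to exploit that $u_0=\Gamma_0\ast\psi$ itself decays like $T^{-d/2}e^{-\kappa|x-y_0|^2/T}$ pointwise away from $\operatorname{supp}(\psi)$, and to transport this decay into the Duhamel representation of $w_\e$ against $\Gamma_\e$, using the Gaussian bound (\ref{f-0-1}) on $\Gamma_\e$ itself -- precisely what requires $\text{\rm VMO}_x$ in the vector case. In the complementary regime $|x_0-y_0|^2\lesssim T$ no exponential factor is needed, so the plain energy estimate suffices; and if $\e\gtrsim\sqrt{T}$ the bound (\ref{f-0-3}) follows directly from the Gaussian bounds on $\Gamma_\e$ and $\Gamma_0$ separately, since $\e\,T^{-(d+1)/2}\gtrsim T^{-d/2}$ there. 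Thus only the regime $\e\ll\sqrt{T}$ with large $|x_0-y_0|$ requires the full homogenization machinery above; the smoothing $S_\e$ and auxiliary cutoffs needed to handle the singularity of $\nabla u_0$ at $\operatorname{supp}(\psi)$ add routine but nontrivial bookkeeping.
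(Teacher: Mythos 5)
Your proposal correctly identifies the broad strategy---a two-scale expansion with parabolic correctors and dual (flux) correctors, Steklov smoothing, and duality against test bumps---and this is indeed the skeleton of the paper's argument. But you also correctly identify the ``principal obstacle'' as producing the Gaussian factor $\exp(-\kappa|x-y|^2/(t-s))$ in the off-diagonal regime, and your proposed resolution of that obstacle has a genuine gap.

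Your plan is to ``transport the decay [of $u_0$] into the Duhamel representation of $w_\e$ against $\Gamma_\e$, using the Gaussian bound (\ref{f-0-1}) on $\Gamma_\e$ itself.'' Since $(\partial_t+\mathcal{L}_\e)w_\e=\mathrm{div}\,G_\e$, the Duhamel representation is $w_\e(x_0,t_0)=-\int\nabla_y\Gamma_\e(x_0,t_0;y,s)\cdot G_\e(y,s)\,dy\,ds$, which requires pointwise Gaussian bounds on $\nabla_y\Gamma_\e$. But under the hypotheses of Theorem \ref{main-theorem-1} (ellipticity, periodicity, $\mathrm{VMO}_x$ only) no such bounds are available; Theorem \ref{D-bound} requires H\"older continuity of $A$. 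Integrating by parts to put the gradient on $G_\e$ does not help either, since $G_\e$ already contains $\nabla^2 u_0$ and you would then need pointwise control of $\nabla^3 u_0$ weighted against the raw $\Gamma_\e$, which is one derivative worse and still does not obviously localize. The vague phrase ``transport this decay'' thus conceals the step that actually makes the theorem hard, and as stated I do not see how it closes.

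The paper's resolution is the Davies/Aronson exponential-weight technique, which you did not invoke: every estimate---the Gr\"onwall-type energy bound for $w_\e$ (Lemma \ref{w-1-1}), the smoothing-operator bounds (Lemmas \ref{lemma-S-3} and \ref{lemma-S-2}), the constant-coefficient second-order estimate for $u_0$ (Theorem \ref{w-0-2}), and the interior $L^\infty$ estimate (Theorem \ref{w-1-9})---is carried out with the weight $e^{\psi}$ for an arbitrary bounded Lipschitz $\psi$, with all constants growing like $e^{c(t-s)\|\nabla\psi\|_\infty^2}$. Plugging in $\psi(y)=\gamma\min(|y-y_0|,|x_0-y_0|)$ and optimizing over $\gamma\sim|x_0-y_0|/(t_0-s_0)$ then yields the Gaussian factor in one stroke, without ever needing gradient bounds on $\Gamma_\e$. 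Crucially the $L^\infty$ step uses only the uniform interior estimate of Theorem \ref{L-theorem}, which holds under $\mathrm{VMO}_x$; a second application of the weighted $L^\infty$ estimate in the $(y,s)$ variables to the adjoint operator converts the $L^2_{y,s}$ duality bound into a pointwise one. One secondary remark: the paper's expansion (\ref{w}) keeps the extra term $\e^2\phi^\e_{(d+1)ij}\partial_i\partial_j u_0$ coming from the \emph{time component} of the dual corrector, which is what cancels the $\partial_s\chi$ contribution and makes (\ref{e-3}) a clean divergence. Your description of the flux corrector as ``the antiderivative of $A(I+\nabla\chi)-\widehat A$'' reads as the purely spatial (elliptic) version and risks omitting this parabolic piece; without it $(\partial_t+\mathcal{L}_\e)w_\e$ is not of the form $\e\,\mathrm{div}(F_\e)$.
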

 
 Let $\chi(y, s)=(\chi_j^{\alpha\beta} (y, s))$, where $1\le j \le d$ and $1\le \alpha, \beta\le m$,
 denote the matrix of correctors for $\partial_t +\mathcal{L}_\e$ (see Section 2 for its definition).
 The next theorem gives an asymptotic expansion for $\nabla_x \Gamma_\e(x, t; y, s)$.
 
 \begin{thm}\label{main-theorem-2}
 Suppose that the coefficient matrix $A$ satisfies conditions (\ref{ellipticity}) and (\ref{periodicity}).
Also assume that $A$ is H\"older continuous, 
\begin{equation}\label{H}
|A(x,t)-A(y, s)|\le \tau \big(|x-y|+|t-s|^{1/2}\big)^\lambda
\end{equation}
for any $(x, t), (y, s)\in \R^{d+1}$, where $\tau\ge 0$ and $\lambda \in (0, 1)$.
Then
\begin{equation}\label{f-0-4}
\aligned
& \big| \nabla_x \Gamma_\e (x, t; y, s)
-\big(1+\nabla \chi(x/\e, t/\e^2)\big)\nabla_x \Gamma_0 (x, t; y, s) \big|\\
&\qquad \le \frac{C\e}{(t-s)^{\frac{d+2}{2}}}
\log (2+\e^{-1} |t-s|^{1/2})
\exp\left\{-\frac{\kappa |x-y|^2}{ t-s}\right\}
\endaligned
\end{equation}
for any $x, y\in \R^d$ and $-\infty<s<t<\infty$,
where $\kappa>0$ depends only on $\mu$.
The constant $C$ depends  on $d$, $m$, $\mu$, and $(\lambda, \tau)$ in (\ref{H}).
 \end{thm}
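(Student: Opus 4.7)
The plan is to prove the pointwise gradient expansion by combining the size estimate of Theorem~\ref{main-theorem-1} with a two-scale interior calculation on a parabolic cylinder of radius $\sqrt{t-s}$. Fix $(y,s)\in\R^{d+1}$ and a unit vector $e\in\R^m$, and set $u_\e(z,\tau)=\Gamma_\e(z,\tau;y,s)e$, $u_0(z,\tau)=\Gamma_0(z,\tau;y,s)e$. Under the H\"older assumption (\ref{H}), the cell-problem solution $\chi$ lies in $C^{1,\lambda}$ uniformly in $\e$, and the parabolic Lipschitz/Schauder estimate available for $\partial_t+\mathcal{L}_\e$ in this setting yields the uniform Gaussian gradient bound
\[
|\nabla_x u_\e(x,t)| \le \frac{C}{(t-s)^{(d+1)/2}}\exp\!\left\{-\frac{\kappa|x-y|^2}{t-s}\right\},
\]
together with the same bound for $\nabla u_0$. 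Consequently, when $\sqrt{t-s}\le C_0\e$ the desired inequality (\ref{f-0-4}) follows at once from the triangle inequality and the boundedness of $\nabla\chi$; so I may restrict to the large-scale regime $r:=\tfrac14\sqrt{t-s}\ge\e$ and work on the parabolic cylinder $Q_r(x,t)$, on which $u_0$ is smooth with the classical Gaussian bounds on its derivatives up to order two (and $\partial_\tau\nabla u_0$).

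On $Q_r(x,t)$ I would introduce the smoothed first-order two-scale approximation
\[
w_\e(z,\tau) := u_0(z,\tau) + \e\,\chi_k^{\alpha\beta}(z/\e,\tau/\e^2)\,\big(S_\e\,\partial_{z_k} u_0^\beta\big)(z,\tau)\,e_\alpha,
\]
where $S_\e$ denotes convolution with a standard parabolic mollifier at scale $\e$; this smoothing is essential to prevent the singularity of $\nabla^2 u_0$ at $(y,s)$ from polluting the calculation. Using the corrector equation for $\chi$ together with parabolic flux correctors $B$ satisfying $\widehat{A}-A(I+\nabla\chi)=\partial_\tau B^\tau+\text{div}_z B^z$ (as introduced in Section~2), a direct computation gives
\[
(\partial_\tau+\mathcal{L}_\e)(u_\e-w_\e) = \text{div}_z F_\e + G_\e \quad\text{in } Q_r(x,t),
\]
with $|F_\e|\le C\e(|\nabla^2 u_0|+|(S_\e-I)\nabla u_0|)$ and $|G_\e|\le C\e|\partial_\tau\nabla u_0|$, both retaining the Gaussian weight. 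By Theorem~\ref{main-theorem-1} and the uniform boundedness of $\chi$, the parabolic-boundary discrepancy obeys $|u_\e-w_\e|\le C\e\, r^{-d-1}\exp(-\kappa|x-y|^2/(t-s))$.

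To conclude, I would represent $\nabla_z(u_\e-w_\e)(x,t)$ via a Duhamel / Green's-function formula against the adjoint fundamental solution $\Gamma_\e^\ast$, whose gradient satisfies $|\nabla_z\Gamma_\e^\ast(z,\tau;x,t)|\le C(t-\tau)^{-(d+1)/2}\exp(-\kappa|z-x|^2/(t-\tau))$ by the same Schauder estimate applied to the adjoint operator. The dominant term is the borderline integral
\[
\e\int_{Q_r(x,t)}|\nabla_z\Gamma_\e^\ast(z,\tau;x,t)|\cdot|\nabla^2 u_0(z,\tau)|\,dz\,d\tau,
\]
which an annular dyadic decomposition over shells $|z-x|+\sqrt{t-\tau}\sim 2^j\e$, $1\le 2^j\le r/\e$, bounds by $O(\e)$ per shell and, after summing, produces exactly the factor $\log(2+\e^{-1}\sqrt{t-s})$ in (\ref{f-0-4}). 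Finally, replacing $w_\e$ by $u_0(x,t)+\e\chi(x/\e,t/\e^2)\nabla u_0(x,t)$ at the reference point contributes an additional $O(\e|\nabla^2 u_0(x,t)|)$ coming from $(S_\e-I)\nabla u_0$ and from freezing $\chi$ at a single point, which is absorbed into the same logarithm.

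The principal obstacle I expect is the sharp tracking of this single logarithmic factor: the smoothing scale, the annular decomposition, the Duhamel representation, and the flux-corrector identity must all be balanced to avoid generating a second $\log$ or losing the correct $\e/(t-s)^{(d+2)/2}$ scaling. A secondary but delicate point is that near the singularity $(y,s)$ the smoothing error $(S_\e-I)\nabla u_0$ must be controlled so that it does not yield an uncompensated $\e/(t-s)^{(d+2)/2}$ contribution; this requires the uniform $L^\infty$ bound on $\chi$ and $\nabla\chi$ supplied by the Schauder theory for the corrector cell problem under (\ref{H}).
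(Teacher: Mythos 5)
Your overall strategy is the same as the paper's (Section \ref{section-6}): reduce to the regime $\e\le r\simeq\sqrt{t-s}$ via the uniform Gaussian gradient bounds, form a corrector expansion of $\Gamma_\e(\cdot,\cdot;y,s)$ around $\Gamma_0$, control the size of the error by Theorem \ref{main-theorem-1}, and recover the gradient through a representation against $\Gamma_\e$ with a cutoff, the logarithm coming from dyadic parabolic shells between the scales $\e$ and $r$. The paper packages this as a general interior estimate (Theorem \ref{w-Lip}) and then applies it to $u_\e=\Gamma_\e(\cdot,\cdot;y_0,s_0)$, $u_0=\Gamma_0(\cdot,\cdot;y_0,s_0)$; your retention of the smoothing $S_\e$ in the local step is harmless but unnecessary there, since $u_0$ is smooth away from the pole.

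The decisive step, however, is not carried out correctly. Because the error solves an equation with divergence-form source $\mathrm{div}(F_\e)$, representing its \emph{gradient} at $(x,t)$ forces the kernel $\nabla_x\nabla_y\Gamma_\e$, which by (\ref{D-2}) scales like $(|x-z|+\sqrt{t-\tau})^{-(d+2)}$, not the single-gradient kernel $(t-\tau)^{-(d+1)/2}e^{-\kappa|x-z|^2/(t-\tau)}$ you quote; with one gradient your ``$O(\e)$ per shell'' count is off by a factor $2^{j}$ per shell, and the geometric sum yields $C\e(t-s)^{-(d+1)/2}$ rather than the desired $C\e\log(2+\e^{-1}\sqrt{t-s})\,(t-s)^{-(d+2)/2}$. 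With the correct two-gradient kernel, the shells $\e\le 2^j\e\le r$ do give the logarithm, but then the near-diagonal region $|z-x|+\sqrt{t-\tau}\le\e$ is genuinely (logarithmically) divergent and cannot be handled by an $L^\infty$ bound on $F_\e$; smoothing $\nabla u_0$ does not help, since the obstruction is the $\e$-oscillation of the coefficient factors ($A^\e$, $\chi^\e$, $(\nabla\phi)^\e$) inside $F_\e$. The paper's proof of Theorem \ref{w-Lip} resolves exactly this point: using $\int_{\R^d}\nabla_y\{\Gamma_\e(x,t;y,s)\varphi(y,s)\}\,dy=0$ one subtracts $F_\e(x,s)$ and exploits the spatial H\"older continuity of $F_\e$ (i.e.\ of $\nabla\chi$ and $\nabla\phi$, Remark \ref{re-2.0}, which is where hypothesis (\ref{H}) enters beyond supplying Lipschitz bounds), producing the term $C\e^{1+\lambda}\|\cdot\|_{C^{\lambda,0}}$ in (\ref{w-Lip-1}) and rendering the singular integral convergent. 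Your proposal contains no such cancellation/H\"older mechanism --- you invoke only $L^\infty$ bounds on $\chi$ and $\nabla\chi$ --- so as written the argument does not close.
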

 
With the summation convention this means that for $1\le i\le d$ and $1\le \alpha, \beta\le m$,
\begin{equation}\label{f-0-5}
\Big|\frac{\partial \Gamma_\e^{\alpha\beta}}{\partial x_i}
(x, t; y, s) 
-\frac{\partial \Gamma_0^{\alpha\beta}} {\partial x_i}
(x, t; y, s) 
-\frac{\partial \chi_j^{\alpha\gamma}}{\partial x_i} (x/\e, t/\e^2)
\frac{\partial \Gamma_0^{\gamma \beta}}{\partial x_j} (x, t; y, s) \Big|
\end{equation}
is bounded by the RHS of (\ref{f-0-4}).
Let $\widetilde{A}(y, s) =\big(\widetilde{a}_{ij}^{\alpha\beta} (y, s)\big)$,
where $\widetilde{a}_{ij}^{\alpha\beta} (y, s)=a_{ji}^{\beta\alpha}(y, -s)$.
Let $\widetilde{\Gamma}_\e (x, t; y, s)=\big(\widetilde{\Gamma}_\e^{\alpha\beta} (x, t; y, s)\big)$ denote 
the matrix of fundamental solutions for the operator $\partial_t +\widetilde{\mathcal{L}}_\e$,
where $\widetilde{\mathcal{L}}_\e= -\text{\rm div}\big(\widetilde{A} (x/\e, t/\e^2)\nabla \big)$.
Then
\begin{equation}\label{f-0-6}
\Gamma_\e^{\alpha\beta} (x, t; y, s)=\widetilde{\Gamma}_\e^{\beta\alpha} (y, -s; x, -t).
\end{equation}
Since $\widetilde{A}$ satisfies the same conditions as $A$, 
it follows from (\ref{f-0-4}), (\ref{f-0-5}) and (\ref{f-0-6}) that
\begin{equation}\label{f-0-7}
\Big|\frac{\partial \Gamma_\e^{\beta\alpha}}{\partial y_i}
(x, t; y, s) 
-\frac{\partial \Gamma_0^{\beta\alpha}} {\partial y_i}
(x, t; y, s) 
-\frac{\partial \widetilde{\chi}_j^{\alpha\gamma}}{\partial y_i} (y/\e, -s/\e^2)
\frac{\partial \Gamma_0^{ \beta\gamma}}{\partial y_j} (x, t; y, s) \Big|
\end{equation}
is bounded by the RHS of (\ref{f-0-4}),
where $\widetilde{\chi} (y, s)=\big(\widetilde{\chi}_j^{\alpha\beta} (y, s)\big)$ denotes the correctors for
 $\partial_t +\widetilde{\mathcal{L}}_\e$. That is
\begin{equation}\label{f-0-8}
\aligned
& \big| \nabla_y \Gamma^T_\e (x, t; y, s)
-\big(I+\nabla \widetilde{\chi}(y/\e, -s/\e^2)\big)\nabla_y \Gamma^T_0 (x, t; y, s) \big|\\
&\qquad \le \frac{C\e}{(t-s)^{\frac{d+2}{2}}}
\log (2+\e^{-1} |t-s|^{1/2})
\exp\left\{-\frac{\kappa |x-y|^2}{ t-s}\right\},
\endaligned
\end{equation}
where $\Gamma_\e^T$ denotes the transpose of the matrix $\Gamma_\e$.

We also obtain an asymptotic expansion for $\nabla_x\nabla_y \Gamma_\e (x, t; y, s)$.

\begin{thm}\label{main-theorem-3}
Under the same assumptions on $A$ as in Theorem \ref{main-theorem-2},
the estimate
\begin{equation}\label{f-0-9}
\aligned
&\Big|\frac{\partial}{\partial x_i \partial y_j} \Big\{ \Gamma^{\alpha\beta}_\e (x, t; y, s)\Big\}- \\
& \frac{\partial }{\partial x_i}
\Big\{ \delta^{\alpha\gamma} x_k + \e \chi_k^{\alpha\gamma} (x/\e, t/\e^2) \Big\}
\frac{\partial^2}{\partial x_k \partial y_\ell}
\Big\{ \Gamma_0^{\gamma \sigma} (x, t; y, s) \Big\}
\frac{\partial}{\partial y_j}
\Big\{ \delta^{\beta\sigma} y_\ell +\e \widetilde{\chi}_\ell^{\beta\sigma} (y/\e, -s/\e^2)\Big\} \Big|\\
& \qquad \le \frac{C\e}{(t-s)^{\frac{d+3}{2}}}
\log (2+\e^{-1}  |t-s|^{1/2})
\exp\left\{-\frac{\kappa |x-y|^2}{ t-s}\right\}
\endaligned
\end{equation}
holds for $x, y\in \R^d$ and $-\infty<s<t<\infty$, where $\kappa$ depends only on $\mu$.
The constant $C$ depends  on $d$, $m$, $\mu$, and $(\lambda, \tau)$ in (\ref{H}).
\end{thm}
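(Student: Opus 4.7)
The plan is to reduce the two-variable gradient expansion to the single-variable expansions of Theorems~\ref{main-theorem-1}--\ref{main-theorem-2} via the semigroup (reproducing) identity for $\Gamma_\varepsilon$. Fix $(x,t)$ and $(y,s)$ with $t>s$, choose the midpoint $\tau:=(t+s)/2$ so that $t-\tau=\tau-s=(t-s)/2$, and start from
\[
\partial_{x_i}\partial_{y_j}\Gamma_\varepsilon^{\alpha\beta}(x,t;y,s)
=\int_{\R^d}\partial_{x_i}\Gamma_\varepsilon^{\alpha\eta}(x,t;z,\tau)\,\partial_{y_j}\Gamma_\varepsilon^{\eta\beta}(z,\tau;y,s)\,dz,
\]
in which differentiation under the integral is justified by the Gaussian bounds on $\nabla_x\Gamma_\varepsilon$ and $\nabla_y\Gamma_\varepsilon$ that follow from Theorem~\ref{main-theorem-2} and (\ref{f-0-8}).

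I would then insert the gradient expansions into the two factors. Writing $M^{\alpha\gamma}_{ik}(y,s):=\delta^{\alpha\gamma}\delta_{ik}+\partial_{y_i}\chi_k^{\alpha\gamma}(y,s)$ and $\widetilde N^{\beta\sigma}_{j\ell}(y,s):=\delta^{\beta\sigma}\delta_{j\ell}+\partial_{y_j}\widetilde\chi_\ell^{\beta\sigma}(y,s)$, (\ref{f-0-5}) gives
\[
\partial_{x_i}\Gamma_\varepsilon^{\alpha\eta}(x,t;z,\tau)
=M^{\alpha\gamma}_{ik}(x/\varepsilon,t/\varepsilon^2)\,\partial_{x_k}\Gamma_0^{\gamma\eta}(x,t;z,\tau)+r_1(z),
\]
and (\ref{f-0-7}) gives
\[
\partial_{y_j}\Gamma_\varepsilon^{\eta\beta}(z,\tau;y,s)
=\widetilde N^{\beta\sigma}_{j\ell}(y/\varepsilon,-s/\varepsilon^2)\,\partial_{y_\ell}\Gamma_0^{\eta\sigma}(z,\tau;y,s)+r_2(z),
\]
where $|r_1(z)|\le C\varepsilon(t-\tau)^{-(d+2)/2}\log(2+\varepsilon^{-1}(t-\tau)^{1/2})\exp(-\kappa|x-z|^2/(t-\tau))$ and $|r_2(z)|$ obeys the analogous bound in $(\tau-s,|z-y|)$. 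Expanding the product gives one main term and three remainders. The reproducing formula for $\Gamma_0$, differentiated in $x$ and $y$,
\[
\int_{\R^d}\partial_{x_k}\Gamma_0^{\gamma\eta}(x,t;z,\tau)\,\partial_{y_\ell}\Gamma_0^{\eta\sigma}(z,\tau;y,s)\,dz
=\partial_{x_k}\partial_{y_\ell}\Gamma_0^{\gamma\sigma}(x,t;y,s),
\]
collapses the main term to $M^{\alpha\gamma}_{ik}(x/\varepsilon,t/\varepsilon^2)\,\partial_{x_k}\partial_{y_\ell}\Gamma_0^{\gamma\sigma}(x,t;y,s)\,\widetilde N^{\beta\sigma}_{j\ell}(y/\varepsilon,-s/\varepsilon^2)$, which is exactly the quantity being subtracted on the left side of (\ref{f-0-9}).

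The three remainder terms are controlled by a Gaussian convolution argument. For the typical cross-term, using the Gaussian bound $|\partial_{y_j}\Gamma_\varepsilon^{\eta\beta}(z,\tau;y,s)|\le C(\tau-s)^{-(d+1)/2}\exp(-\kappa|z-y|^2/(\tau-s))$ yields
\[
\Big|\int_{\R^d}r_1(z)\,\partial_{y_j}\Gamma_\varepsilon^{\eta\beta}(z,\tau;y,s)\,dz\Big|
\le \frac{C\varepsilon\log(2+\varepsilon^{-1}(t-\tau)^{1/2})}{(t-\tau)^{(d+2)/2}(\tau-s)^{(d+1)/2}}\int_{\R^d}e^{-\kappa|x-z|^2/(t-\tau)}e^{-\kappa|z-y|^2/(\tau-s)}\,dz.
\]
The inner Gaussian convolution evaluates to a universal constant times $((t-\tau)(\tau-s)/(t-s))^{d/2}\exp(-\kappa'|x-y|^2/(t-s))$ for some $\kappa'>0$ depending only on $\mu$, and the choice $t-\tau=\tau-s=(t-s)/2$ produces the required upper bound $C\varepsilon(t-s)^{-(d+3)/2}\log(2+\varepsilon^{-1}(t-s)^{1/2})\exp(-\kappa'|x-y|^2/(t-s))$. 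The symmetric cross-term involving $r_2$ and the doubly-small term $\int r_1 r_2\,dz$ admit the same treatment.

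The main obstacle is bookkeeping rather than conceptual: one must verify that the reproducing formula for $\Gamma_\varepsilon$ can be differentiated pointwise under the integral (which reduces to the sharp Gaussian bounds on $\nabla_x\Gamma_\varepsilon$ and $\nabla_y\Gamma_\varepsilon$ supplied by Theorem~\ref{main-theorem-2} and (\ref{f-0-8})), and that the Gaussian convolution step quantitatively preserves a positive exponential constant $\kappa'$ depending only on $\mu$.
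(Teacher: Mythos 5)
Your argument is correct in outline but takes a genuinely different route from the paper. The paper proves Theorem~\ref{main-theorem-3} by applying the Lipschitz estimate of Theorem~\ref{w-Lip} one more time, this time to
\[
u_\e^\alpha(x,t)=\partial_{y_j}\Gamma_\e^{\alpha\beta}(x,t;y_0,s_0),
\qquad
u_0^\alpha(x,t)=\partial_{y_\ell}\Gamma_0^{\alpha\sigma}(x,t;y_0,s_0)\bigl(\delta^{\beta\sigma}\delta_{j\ell}+\partial_{y_j}\widetilde\chi_\ell^{\beta\sigma}(y_0/\e,-s_0/\e^2)\bigr),
\]
both of which solve the respective homogeneous equations in $Q_{4r}(x_0,t_0)$ with $r\sim\sqrt{t_0-s_0}$; the first term on the right side of (\ref{w-Lip-1}) is then controlled by (\ref{f-0-8}) and the rest follow from (\ref{f-const}). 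You instead factorize $\nabla_x\nabla_y\Gamma_\e$ via the Chapman--Kolmogorov identity at the midpoint $\tau=(t+s)/2$, insert the one-variable expansions from Theorem~\ref{main-theorem-2} and (\ref{f-0-8}) into each factor, and close with a Gaussian convolution. This reduction is conceptually clean: it derives Theorem~\ref{main-theorem-3} directly from Theorem~\ref{main-theorem-2} without a third pass through the two-scale machinery of Theorem~\ref{w-Lip}. The price is an extra ``doubly-small'' remainder $\int r_1 r_2\,dz$ that the paper's proof never encounters.

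Two details need repair. First, the Gaussian bounds on $\nabla_x\Gamma_\e$ and $\nabla_y\Gamma_\e$ that justify differentiating the reproducing identity under the integral (and that you use for the second factor in the cross-terms) are those of Theorem~\ref{D-bound}; attributing them to Theorem~\ref{main-theorem-2} and (\ref{f-0-8}) would be circular, since (\ref{D-1}) is itself invoked in the proof of (\ref{f-0-4}). Second, the term $\int r_1 r_2\,dz$ does \emph{not} ``admit the same treatment'' as the cross-terms: carrying out your own convolution estimate gives the bound $C\e^2\log^2(2+\e^{-1}(t-s)^{1/2})\,(t-s)^{-(d+4)/2}\exp\{-\kappa'|x-y|^2/(t-s)\}$, which carries an extra factor $\e\log(2+\e^{-1}(t-s)^{1/2})/(t-s)^{1/2}$ relative to the claimed right side. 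That factor is bounded only on the range $\e\le c\,(t-s)^{1/2}$; the complementary regime $\e>c\,(t-s)^{1/2}$ must be dispatched directly from (\ref{D-2}), exactly as the paper does at the start of Section~\ref{section-7}. With these two adjustments, your argument closes.
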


In the scale case $m=1$, the estimate (\ref{f-0-3}),
{\it without} the exponential factor, is known under the conditions that
$A$ is elliptic, periodic, symmetric, and time-independent (see \cite[p.77]{JKO-1994} and its references).
This was proved by using the Floquet-Bloch decomposition of the fundamental solutions 
and by studying the spectral properties of elliptic operators \newline
$
-(\nabla +ik )\cdot A (\nabla +ik)
$
 in a periodic cell, where $i=\sqrt{-1}$ and  $k\in \R^d$.
Such an approach is not available when the coefficient matrix $A$ is time-dependent.
To the best of authors' knowledge,  the Gaussian bound in Theorem \ref{main-theorem-1} as well
our estimates in Theorems \ref{main-theorem-2} and \ref{main-theorem-3}
is new even in the case that $m=1$ and $A$ is time-independent.

As a corollary of Theorems \ref{main-theorem-1} and \ref{main-theorem-2},
 we establish an interesting result on equistabilization for time-dependent coefficients (cf. \cite[p.77]{JKO-1994}).

\begin{cor}\label{cor-0-1}
Assume that $A$ satisfies the same conditions as in Theorem \ref{main-theorem-1}.
Let $f\in L^\infty(\R^d)$ and $u_\e$ be the bounded solution of the Cauchy problem,
\begin{equation}\label{C-P}
\left\{
\aligned
(\partial_t +\mathcal{L}_\e)u_\e & = 0& \quad  &\text{ in } \R^d \times (0, \infty),\\
u_\e & =f & \quad  & \text{ on } \R^d \times \{ t=0\},
\endaligned
\right.
\end{equation}
with $\e=1$ or $0$.  Then for any $x\in \R^d$ and $t\ge 1$, 
\begin{equation}\label{C-P-1}
|u_1 (x, t) -u_0(x, t)| \le C t^{-1/2} \| f\|_\infty.
\end{equation}
Furthermore, if $A$ is H\"older continuous, 
\begin{equation}\label{C-P-2}
\Big|\nabla u^\alpha_1 (x, t)-\nabla u^\alpha_0(x, t) 
-\nabla \chi_j^{\alpha\beta} (x, t) \frac{\partial u_0^\beta}{\partial x_j} (x, t) \Big|\le
C t^{-1} \log (2+ t) \| f\|_\infty
\end{equation}
for any $x\in \R^d$ and $t\ge  1$.
\end{cor}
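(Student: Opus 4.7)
The plan is to express both $u_0$ and $u_1$ as convolutions of the initial data $f$ with the corresponding fundamental solutions, and then feed the pointwise estimates of Theorems \ref{main-theorem-1} and \ref{main-theorem-2} into a Gaussian integration in $y$.

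The first step is the representation
\[
u_\e(x,t) = \int_{\R^d} \Gamma_\e(x,t;y,0)\, f(y)\, dy, \qquad \e \in \{0, 1\}.
\]
By the Gaussian bound (\ref{f-0-1}) the right-hand side is a bounded function of $(x,t)$, and by the local H\"older estimate (\ref{H-0}) (invoking the VMO$_x$ hypothesis when $m \ge 2$) it solves $(\partial_t + \mathcal{L}_\e)u_\e = 0$ with initial datum $f$. Uniqueness of bounded solutions to the Cauchy problem then identifies this convolution with the given $u_\e$.

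Once the representation is in hand, estimate (\ref{C-P-1}) follows from Theorem \ref{main-theorem-1} at $\e = 1$ combined with the Gaussian computation
\[
\int_{\R^d} t^{-(d+1)/2}\, e^{-\kappa|x-y|^2/t}\, dy \le C\, t^{-1/2}.
\]
For (\ref{C-P-2}) I would differentiate under the integral sign (justified by standard Caccioppoli-type gradient estimates for $\Gamma_\e$ together with its Gaussian decay) and use that $\nabla\chi(x,t)$ is independent of $y$ to obtain
\[
\nabla u_1 - \bigl(I + \nabla\chi(x,t)\bigr)\nabla u_0 = \int_{\R^d} \bigl[ \nabla_x \Gamma_1 - (I + \nabla\chi(x,t))\nabla_x \Gamma_0 \bigr](x,t;y,0)\, f(y)\, dy,
\]
where, under the summation convention of (\ref{C-P-2}), the left-hand side is exactly $\nabla u_1^\alpha - \nabla u_0^\alpha - \nabla\chi_j^{\alpha\beta}(x,t)\,\partial u_0^\beta/\partial x_j$. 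Plugging in Theorem \ref{main-theorem-2} at $\e = 1$, integrating the Gaussian with one additional power of $t$ in the denominator, and noting $\log(2 + t^{1/2}) \le \log(2+t)$ for $t \ge 1$ gives (\ref{C-P-2}).

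The hard part is the first step: justifying $u_\e = \Gamma_\e * f$ for merely bounded data $f \in L^\infty(\R^d)$ and showing that this is the \emph{unique} bounded solution of (\ref{C-P}). For $m = 1$ this is classical parabolic theory; for $m \ge 2$ it relies on the interior H\"older regularity of $\Gamma_\e$ and $\nabla_x\Gamma_\e$ supplied by the VMO$_x$ framework, together with a bounded-solution uniqueness result for $\partial_t + \mathcal{L}_\e$ that follows from the Gaussian bound (\ref{f-0-1}). After this foundational step the remainder of the corollary reduces to the two explicit Gaussian integrations indicated above.
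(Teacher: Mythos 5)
Your proposal is correct and takes what is essentially the only natural route given the statements of Theorems \ref{main-theorem-1} and \ref{main-theorem-2}; the paper in fact states Corollary \ref{cor-0-1} as an immediate consequence of those theorems without providing a separate proof. The two Gaussian integrations you indicate are exactly the right computations: with $\e=1$ and $s=0$, (\ref{f-0-3}) integrated against $\|f\|_\infty$ over $y\in\R^d$ produces the $t^{-1/2}$ decay, and (\ref{f-0-4}) similarly produces $t^{-1}\log(2+t^{1/2})\le t^{-1}\log(2+t)$ for $t\ge 1$. You are also right that the algebraic reduction works because $\nabla\chi(x,t)$ does not depend on the integration variable $y$, so that under the summation convention
\[
\nabla u_1^\alpha-\nabla u_0^\alpha-\nabla\chi_j^{\alpha\beta}(x,t)\,\frac{\partial u_0^\beta}{\partial x_j}
=\int_{\R^d}\Bigl[\nabla_x\Gamma_1^{\alpha\gamma}-\nabla_x\Gamma_0^{\alpha\gamma}-\nabla\chi_j^{\alpha\beta}(x,t)\,\frac{\partial\Gamma_0^{\beta\gamma}}{\partial x_j}\Bigr](x,t;y,0)\,f^\gamma(y)\,dy,
\]
to which (\ref{f-0-4}) applies pointwise. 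The only place where you are a bit terse is the justification that the convolution $\Gamma_\e(\cdot,\cdot\,;\cdot,0)*f$ is \emph{the} bounded solution for $L^\infty$ data and that one may differentiate under the integral; but these follow, as you note, from the Gaussian bounds (\ref{f-0-1}), the gradient bound (\ref{D-1}), and the interior H\"older/Lipschitz regularity furnished by Theorems \ref{L-theorem} and \ref{Lip-theorem}, together with a standard maximum-principle or Gaussian-bound argument for uniqueness of bounded solutions. This is consistent with the framework the paper assumes when it speaks of ``the bounded solution'' of the Cauchy problem.
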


We now describe some of the key ideas in the proof of Theorems \ref{main-theorem-1}, \ref{main-theorem-2} and
\ref{main-theorem-3}.
As indicated earlier, our main results
extend the analogous results in \cite{AL-1991, KLS-2014} for the elliptic operators $-\text{\rm div} \big(A(x/\e)\nabla \big)$, where
$A=A(y)$ is elliptic and periodic.
Our general approach  is inspired by the work  in \cite{KLS-2014}, which uses a two-scale expansion and 
relies on regularity estimates that are uniform in
$\e>0$.
 Following  \cite{Geng-Shen-2017},  we consider the two-scale expansion
 \begin{equation}\label{w-1}
 w_\e =u_\e (x, t)-u_0 (x, t)
 -\e \chi (x/\e, t/\e^2)  S_\e (\nabla u_0) 
 -\e^2  \phi (x/\e, t/\e^2) \nabla S_\e (\nabla u_0),
 \end{equation}
 where $\chi (y, s)$ and $\phi(y,s)$ are correctors and dual correctors respectively for
 $\partial_t +\mathcal{L}_\e$ (see Section 2 for their definitions).
 In (\ref{w-1}) the operator $S_\e$  is a parabolic smoothing
 operator at scale $\e$.
 In comparison with the elliptic case, an extra term is added in the RHS of (\ref{w-1}).
 This modification allows us to show if $(\partial_t +\mathcal{L}_\e)u_\e=(\partial_t+\mathcal{L}_0) u_0$, then
 \begin{equation}\label{w-2}
  (\partial_t +\mathcal{L}_\e) w_\e=\e\,  \text{\rm div}(F_\e)
  \end{equation}
  for some function $F_\e$, which depends only on $u_0$.
  As a consequence, we may apply the uniform interior $L^\infty$ estimates established in \cite{Geng-Shen-2015} to the function $w_\e$.
  To fully utilize the ideas above, we will consider the functions
  \begin{equation}\label{w-3}
  \aligned
  u_\e (x, t) & =\int_{-\infty}^t \int_{\R^d} \Gamma_\e (x, t; y, s) f(y, s) e^{-\psi (y)}\, dy ds,\\
   u_0 (x, t) & =\int_{-\infty}^t \int_{\R^d} \Gamma_0 (x, t; y, s) f(y, s) e^{-\psi (y)}\, dy ds,
   \endaligned
   \end{equation}
  where $\psi$ is a Lipchitz function in $\R^d$ and $f\in C_0^\infty(Q_r(y_0, s_0); \R^m)$.
  The main technical step in proving Theorem \ref{main-theorem-1} involves bounding the $L^\infty$ norm
  \begin{equation}\label{w-4}
  \|e^\psi (u_\e -u_0)\|_{L^\infty( Q_r (x_0, t_0))}
  \end{equation}
  by $\|f\|_{L^2(Q_r (y_0, s_0))}$, where $0<\e< r=c\sqrt{t_0-s_0}$.
  We remark that the use of weighted inequalities with weight $e^\psi$ to generate the exponential factor
  in the Gaussian bound is more or less well known. 
  Our approach may be regarded as a variation of the standard one found in \cite{Hofmann-2004, Dong-2008}
  (also see earlier work in \cite{Fabes-1986, Davies-1987, Davies-1987a}).
  
  The proof of Theorem \ref{main-theorem-2} uses the estimate in Theorem \ref{main-theorem-1}.
  The stronger assumption that $A$ is H\"older continuos allows us to 
  apply the uniform interior Lipschitz estimate obtained in \cite{Geng-Shen-2015} to the function $w_\e$ in (\ref{w-1}).
  To see Theorem \ref{main-theorem-3}, one uses the fact that as a function of $(x, t)$,
  $\nabla_y \Gamma_\e (x, t; y, s)$ is a solution of 
  $(\partial_t +\mathcal{L}_\e ) u_\e=0$, away from the pole $(y, s)$.  
  
We end this section with some notations that will be used throughout the paper.
A function $h=h(y,s)$ in $\R^{d+1}$
 is said to be $1$-periodic if $h$ is periodic with respect to $\mathbb{Z}^{d+1}$. 
 We will use the notations
 $$
\fint_E f =\frac{1}{|E|} \int_E f \quad \text{ and } \quad  h^\e (x,t)= h (x/\e, t/\e^2)
 $$
 for $\e>0$, as well as the summation convention that the repeated indices are summed.
 Finally, we shall use $\kappa$ to denote positive constants that depend only on $\mu$, 
 and $C$ constants that depend at most on $d$, $m$, $\mu$
 and the smoothness of $A$, but never on $\e$.

%%%%%%%%%%%%%%%%%%%%%%%%%%%%%%%%%%%%%%%%%%%%%%%%%%

%%%%%%%%%%%%%%%%%%%%%%%%%%%%%%

\section{\bf Preliminaries}\label{section-2}

 Let $\mathcal{L}_\varepsilon=-\text{div}\left(A^\e(x,t) \nabla\right)$, where $A^\e (x,t)=A(x/\e, t/\e^2)$.
 Assume that $A(y, s)$ is 1-periodic in $(y, s)$ and satisfies   the ellipticity condition (\ref{ellipticity}).
 For $1\leq j\leq d$ and $1\le \beta\le m$, 
 the corrector $\chi_j^\beta=\chi_j^\beta (y,s)=(\chi_{j}^{\alpha\beta} (y, s))$ 
  is defined as the weak solution of the following cell problem:
\begin{equation}\label{corrector}
\begin{cases}
\big(\partial_s +\mathcal{L}_1\big) (\chi_j^\beta)
=-\mathcal{L}_1(P_j^\beta ) ~~~\text{in}~~Y, \\
\chi_j^\beta =\chi^\beta_j(y,s)~~ \text{is } \text{1-periodic in } (y,s),\\
\int_{Y} \chi_j^\beta = 0,
\end{cases}
\end{equation}
where $Y=[0,1)^{d+1}$, $P_j^\beta  (y)=y_j e^\beta$, and
$e^\beta=(0, \dots, 1, \dots, 0 )$ with $1$ in the $\beta^{th}$ position. 
Note that 
\begin{equation}\label{c-00}
(\partial_s+\mathcal{L}_1)(\chi_j^\beta +P_j^\beta )=0~~~\text{in}~~\mathbb{R}^{d+1}.
\end{equation}
By the rescaling property of $\partial_t +\mathcal{L}_\e$, 
one obtains that
\begin{equation}
(\partial_t+\mathcal{L}_\varepsilon)\left\{\varepsilon\chi_j^\beta(x/\varepsilon,t/\varepsilon^2)+P^\beta_j(x)\right\}
=0~~~\text{in}~~\mathbb{R}^{d+1}.
\end{equation}

 We say $A\in \text{VMO}_x$ if
\begin{equation}\label{VMO-x-0}
\lim_{ r\to 0} A^{\#}(r)=0,
\end{equation}
 where
\begin{equation}\label{VMO-x}
A^{\#} (r)
=\sup_{\substack{0< \rho<r\\ (x, t)\in \R^{d+1}}}
\fint_{t-\rho^2}^{t}
\fint_{y\in B(x, \rho)}\fint_{z\in B(x, \rho)}
|A(y, s)-A(z, s)|\, dz dy ds.
\end{equation}
Observe that if $A(y, s)$ is continuous in the variable $y$, uniformly in $(y, s)$, then $A\in \text{VMO}_x$.

\begin{lemma}\label{p-1}
Assume that $A(y, s)$ is 1-periodic in $(y, s)$ and satisfies  (\ref{ellipticity}).
If $m\ge 2$, we also assume $A\in \text{\rm VMO}_x$.
Then $\chi_j^\beta\in L^\infty (Y; \R^m)$.
\end{lemma}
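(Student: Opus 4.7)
The strategy is to realize $\chi_j^\beta$ as (the periodic part of) an entire weak solution on $\R^{d+1}$, and then apply a local $L^\infty$ estimate. First, testing the cell problem (\ref{corrector}) against $\chi_j^\beta$ itself and using the ellipticity condition (\ref{ellipticity}) gives the standard energy bound $\|\nabla \chi_j^\beta\|_{L^2(Y)} \le C$. Combined with the zero-mean normalization $\int_Y \chi_j^\beta = 0$ and the periodic Poincar\'e inequality, this upgrades to $\|\chi_j^\beta\|_{L^2(Y)} \le C$.

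Next I extend $\chi_j^\beta$ periodically to all of $\R^{d+1}$. By (\ref{c-00}), the function $u:=\chi_j^\beta+P_j^\beta$ is then a global weak solution of $(\partial_s+\mathcal{L}_1)u=0$ in $\R^{d+1}$. Applying a local $L^\infty$ bound of the form
$$\|u\|_{L^\infty(Q_r)}\le C\left(\fint_{Q_{2r}}|u|^2\right)^{1/2}$$
and choosing a radius $r_0$ large enough that $Y\subset Q_{r_0}(x_0,t_0)$ for some base point, the periodicity of $\chi_j^\beta$ yields $\|\chi_j^\beta\|_{L^2(Q_{2r_0})}\le C(r_0)\|\chi_j^\beta\|_{L^2(Y)}\le C$, while $P_j^\beta$ is obviously bounded on $Q_{2r_0}$. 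This gives $\|u\|_{L^\infty(Q_{r_0})}\le C$, and hence $\|\chi_j^\beta\|_{L^\infty(Y)}\le \|u\|_{L^\infty(Q_{r_0})}+\sup_Y|P_j^\beta|\le C$.

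The only substantive issue is the validity of the local $L^\infty$ estimate quoted above. For $m=1$ it is the classical De Giorgi--Nash--Moser bound (implicit in (\ref{H-0})), which requires only (\ref{ellipticity}). For $m\ge 2$ I would invoke the interior H\"older theory for parabolic systems with $\mathrm{VMO}_x$ coefficients developed in \cite{Byun-2007,Krylov-2007} (cited in the excerpt at scale $0<r<\varepsilon$, no periodicity needed) applied at $\varepsilon=1$; iterating this, or combining it with the global estimate (\ref{H-0}) of \cite{Geng-Shen-2015} at $\varepsilon=1$, supplies the displayed local $L^\infty$ bound on $Q_{r_0}$.

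\textbf{Main obstacle.} There is no deep technical obstacle, but the crucial point is the $m\ge 2$ case: De Giorgi-type counterexamples show that weak solutions of parabolic systems with merely bounded measurable coefficients can fail to be locally bounded, so some spatial regularity of $A$ is unavoidable. The $\mathrm{VMO}_x$ assumption is exactly what is needed to import the interior H\"older estimate of \cite{Byun-2007,Krylov-2007}, which in turn yields the local $L^\infty$ bound driving the argument.
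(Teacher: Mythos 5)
Your proposal is correct and follows essentially the paper's route: for $m=1$ the boundedness comes from Nash's classical estimate applied via (\ref{c-00}), and for $m\ge 2$ it rests on the interior $W^{1,p}$ (hence H\"older/$L^\infty$) theory of \cite{Byun-2007, Krylov-2007} for parabolic systems with $\text{VMO}_x$ coefficients at the fixed scale $\e=1$, exactly as in the paper; the only cosmetic difference is that you apply the local bound to the homogeneous solution $\chi_j^\beta+P_j^\beta$ while the paper applies the $W^{1,p}$ estimate directly to $\chi_j^\beta$ with divergence-form data $-\mathcal{L}_1(P_j^\beta)$, which is immaterial. One small gloss: the Poincar\'e step is in the $(y,s)$-variables where you only control $\nabla_y\chi$, so one should either note that the spatial average of $\chi_j^\beta$ is independent of $s$ (integrate the cell equation in $y$) or simply use that $\chi_j^\beta\in H^1(Y)$ by construction, since the lemma is qualitative.
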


\begin{proof}
In the scalar case $m=1$, this follows from (\ref{c-00}) by  Nash's classical estimate.
Moreover, the estimate 
\begin{equation}\label{p-2}
\left(\fint_{Q_r (x, t)} |\nabla \chi_j^\beta |^2\right)^{1/2}
\le C r^{\sigma-1}
\end{equation}
holds for any $0<r<1$ and $(x, t)\in \R^{d+1}$,
where  $Q_r (x,t)=B(x, r)\times (t-r^2, t)$, 
and $C>0$ and $\sigma\in (0,1)$ depend on $d$ and $\mu$.
If $m\ge 2$ and $A\in \text{VMO}_x$,
the boundedness of $\chi_j^\beta$  follows from the interior $W^{1, p}$ estimates for local solutions of
$(\partial_t +\mathcal{L}_1)(u)=\text{div}(f)$ \cite{Byun-2007, Krylov-2007}.
In this case the estimate (\ref{p-2}) holds for any $\sigma \in (0,1)$.
\end{proof}

Let $\widehat{A}=(\widehat{a}^{\alpha\beta}_{ij})$, where $1\leq i,j\leq d$, $1\le \alpha, \beta\le m$,
and
\begin{align}\label{A}
\widehat{a}^{\alpha\beta}_{ij}=\dashint_{Y}\left[a^{\alpha\beta}_{ij}+a^{\alpha\gamma}
_{i k}\frac{\partial}{\partial y_k}\chi^{\gamma\beta}_{j}\right];
\end{align}
that is
$$
\widehat{A}=\dashint_Y \Big\{ A +A\nabla \chi \Big\}.
$$
It is known that the constant matrix  $\widehat{A}$ satisfies the ellipticity condition,
\begin{equation}\label{A-ellipticity}
\mu |\xi|^2 \le \widehat{a}_{ij}^{\alpha\beta} \xi_i^\alpha \xi_j^\beta
 \qquad \text{ for any } \xi=(\xi_j^\beta) \in \R^{m\times d},
\end{equation}
and $|\widehat{a}_{ij}^{\alpha\beta}|\le \mu_1$,
where $\mu_1>0$ depends only on $d$, $m$ and $\mu$ \cite{BLP-2011}.
Denote 
$$
\mathcal{L}_0=-\text{div}(\widehat{A}\nabla).
$$
 Then $\partial_t+\mathcal{L}_0$ is the homogenized operator 
for the family of parabolic operators  $\partial_t+\mathcal{L}_\varepsilon$, $\e>0$.

To introduce the dual correctors, we consider the 1-periodic matrix-valued function
\begin{equation}\label{B}
B= A + A\nabla \chi -\widehat{A}.
\end{equation}
More precisely,  $B=B(y, s)= \big( b_{ij}^{\alpha\beta}\big)$, where
$1\le i, j\le d$, $1\le \alpha, \beta\le m$, and
\begin{equation}\label{b}
b_{ij}^{\alpha\beta}=a_{ij}^{\alpha\beta}+a_{ik}^{\alpha\gamma}
\frac{\partial \chi^{\gamma\beta}_j}{\partial y_k}-\widehat{a}^{\alpha\beta}_{ij}.
\end{equation}

\begin{lemma}\label{1.15}
Let $1\leq j\leq d$ and $1\le \alpha, \beta\le m$.
Then there exist 1-periodic functions 
$\phi_{kij}^{\alpha\beta}(y,s)$ in $\R^{d+1}$ such that 
$\phi_{kij}^{\alpha\beta}\in H^1(Y)$,
\begin{equation}\label{1.10}
b_{ij}^{\alpha\beta}=\frac{\partial}{\partial y_k}(\phi^{\alpha\beta}_{kij})~~\text{ and }~~\phi^{\alpha\beta}_{kij}
=-\phi_{ikj}^{\alpha\beta},
\end{equation}
where $1\le k, i\le d+1$, $b_{ij}^{\alpha\beta}$ is defined by
(\ref{b}) for $1\le i\le d$,  $b_{(d+1)j}^{\alpha\beta}=-\chi_j^{\alpha\beta}$,
and we have used the notation $y_{d+1}=s$.
\end{lemma}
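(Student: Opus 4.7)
The plan is to view $\bigl(b_{ij}^{\alpha\beta}\bigr)_{1\le i\le d+1}$, for each fixed $(j,\alpha,\beta)$, as a mean-zero, divergence-free vector field on the $(d{+}1)$-dimensional torus (with $y_{d+1}=s$ and $\partial_{y_{d+1}}=\partial_s$), and then to realize it as the divergence of an antisymmetric matrix via the standard Hodge-type trick: solve a Poisson equation componentwise and take a curl. This is the direct parabolic analogue of the dual-corrector construction in the time-independent elliptic setting.

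First I would verify the two required properties of the extended array. Expanding the cell equation (\ref{c-00}) in coordinates gives
$$\partial_s \chi_j^{\alpha\beta}-\partial_{y_i}\bigl(a_{ij}^{\alpha\beta}+a_{ik}^{\alpha\gamma}\partial_{y_k}\chi_j^{\gamma\beta}\bigr)=0\quad\text{in }\mathcal{D}'(\mathbb{R}^{d+1}).$$
Since $\widehat{a}_{ij}^{\alpha\beta}$ is constant, this rewrites as $\sum_{i=1}^{d}\partial_{y_i} b_{ij}^{\alpha\beta}=\partial_s \chi_j^{\alpha\beta}$, so with $b_{(d+1)j}^{\alpha\beta}:=-\chi_j^{\alpha\beta}$ we obtain $\sum_{i=1}^{d+1}\partial_{y_i} b_{ij}^{\alpha\beta}=0$. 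The mean-zero property is equally immediate: $\int_Y b_{ij}^{\alpha\beta}=0$ for $1\le i\le d$ is built into the definition (\ref{A}) of $\widehat{A}$, while $\int_Y b_{(d+1)j}^{\alpha\beta}=-\int_Y\chi_j^{\alpha\beta}=0$ by the corrector normalization. For regularity we note $b_{ij}^{\alpha\beta}\in L^2(Y)$ for $1\le i\le d$ (from $\nabla_y\chi_j\in L^2(Y)$) and $b_{(d+1)j}^{\alpha\beta}\in L^\infty(Y)$ by Lemma \ref{p-1}.

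Next, fixing $(j,\alpha,\beta)$, I would solve on the torus $\mathbb{T}^{d+1}$ the Poisson equations
$$\Delta_y f_i=b_{ij}^{\alpha\beta},\qquad \int_Y f_i=0,\qquad 1\le i\le d+1,$$
obtaining $f_i\in H^2_{\rm per}(Y)$ by standard elliptic regularity, and then set
$$\phi_{kij}^{\alpha\beta}:=\partial_{y_k} f_i-\partial_{y_i}f_k,$$
which is antisymmetric in $(k,i)$ and lies in $H^1(Y)$. Using the summation convention in $k$,
$$\partial_{y_k}\phi_{kij}^{\alpha\beta}=\Delta_y f_i-\partial_{y_i}\Bigl(\sum_{k=1}^{d+1}\partial_{y_k} f_k\Bigr)=b_{ij}^{\alpha\beta}-\partial_{y_i} g,$$
where $g:=\sum_k\partial_{y_k} f_k$ is periodic and harmonic on $\mathbb{T}^{d+1}$ because $\Delta_y g=\sum_k\partial_{y_k} b_{kj}^{\alpha\beta}=0$ by the divergence-free identity. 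Hence $g$ is constant on the torus, and the constant vanishes since $\int_Y \partial_{y_k} f_k=0$ by periodicity. This yields the desired $b_{ij}^{\alpha\beta}=\partial_{y_k}\phi_{kij}^{\alpha\beta}$.

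The only substantive obstacle is the first step: correctly identifying $-\chi_j^{\alpha\beta}$ as the time-component flux that closes the space-time conservation law, and interpreting $\partial_s\chi_j^{\alpha\beta}$ distributionally (since a priori $\chi_j$ need not belong to $H^1(Y)$ in all $d{+}1$ variables). Once this ``$(d{+}1)$-dimensional divergence-free'' viewpoint is in place, the rest of the argument is a routine adaptation of the elliptic Jikov--Kozlov--Oleinik dual-corrector construction.
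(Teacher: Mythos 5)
Your proof is correct and follows the same Hodge-type construction as the paper's: extend $b$ to a mean-zero, divergence-free vector field on the $(d{+}1)$-torus via $b_{(d+1)j}^{\alpha\beta}=-\chi_j^{\alpha\beta}$, solve componentwise Poisson equations, take the antisymmetric gradient combination $\phi_{kij}^{\alpha\beta}=\partial_{y_k}f_i-\partial_{y_i}f_k$, and use periodicity plus harmonicity to dispose of the gradient of $\sum_k\partial_{y_k}f_k$. One minor remark: the invocation of Lemma~\ref{p-1} for $b_{(d+1)j}^{\alpha\beta}\in L^\infty(Y)$ is unnecessary (and, for $m\ge 2$, would import the $\mathrm{VMO}_x$ hypothesis into a lemma that does not need it); $\chi_j^{\alpha\beta}\in L^2(Y)$, which is automatic from the weak formulation of the cell problem, already places $b_{(d+1)j}^{\alpha\beta}\in L^2(Y)$ and suffices to solve the Poisson equations in $H^2(Y)$.
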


\begin{proof}
This lemma was proved in \cite{Geng-Shen-2015}.
We give a proof here for reader's convenience.
By (\ref{corrector}) and (\ref{A}), $b_{ij}^{\alpha\beta} \in L^2 (Y)$ and 
\begin{equation}\label{1.8-1}
\int_{Y} b_{ij}^{\alpha \beta}=0
\end{equation}
for $1\le i\le d+1$.
It follows that  there exist $f_{ij}^{\alpha\beta} \in H^2 (Y)$ such that
\begin{equation}\label{1.8-2}
\begin{cases}
\Delta_{d+1} f_{ij}^{\alpha\beta}=b^{\alpha\beta}_{ij}~~~~\text{ in } \R^{d+1}, \\
f_{ij}^{\alpha\beta} ~~\text{is  1-periodic}~~~\text{ in } \R^{d+1}, 
\end{cases}
\end{equation}
where $\Delta_{d+1}$ denotes  the Laplacian in $\R^{d+1}$. Write 
\begin{align}\label{1.11}
b_{ij}^{\alpha\beta}=
\frac{\partial}{\partial y_k}\left\{\frac{\partial}{\partial y_k}f_{ij}^{\alpha\beta}
-\frac{\partial}{\partial y_i}f_{kj}^{\alpha\beta}\right\}+\frac{\partial}{\partial y_i}\left\{\frac{\partial}{\partial y_k}f_{kj}^{\alpha\beta}\right\},
\end{align}
where the index $k$ is summed from $1$ to $d+1$.
Note that by (\ref{corrector}),
 \begin{equation}\label{div}
 \sum_{i=1}^{d+1} \frac{\partial b^{\alpha\beta}_{ij}}{\partial y_i}
 =\sum_{i=1}^d \frac{\partial}{\partial y_i}b^{\alpha\beta}_{ij}-\frac{\partial}{\partial s}\chi_j^{\alpha\beta}=0.
 \end{equation}
In view of (\ref{1.8-2}) this implies that 
$$
\sum_{i=1}^{d+1}\frac{\partial }{\partial y_i}  f_{ij}^{\alpha\beta}
$$
is harmonic in $\R^{d+1}$. Since it is 1-periodic, it must be constant.
Consequently, by (\ref{1.11}), we obtain 
\begin{align}\label{1.13}
b_{ij}^{\alpha\beta}=\frac{\partial}{\partial y_k}(\phi_{kij}^{\alpha\beta}),
\end{align}
where
\begin{align}\label{1.14}
\phi_{kij}^{\alpha\beta}
=\frac{\partial}{\partial y_k}f_{ij}^{\alpha\beta}
-\frac{\partial}{\partial y_i}f_{kj}^{\alpha\beta}
\end{align}
is 1-periodic and belongs to $H^1(Y)$.
It is easy to see that $\phi_{kij}^{\alpha\beta}=-\phi_{ikj}^{\alpha\beta}$. 
\end{proof}

The 1-periodic functions $(\phi_{kij}^{\alpha\beta})$, given by Lemma \ref{1.15},
 are called dual correctors for
the family of parabolic operators $\partial_t +\mathcal{L}_\e$, $\e>0$.

\begin{lemma}\label{p-10}
Let $\phi= (\phi_{kij}^{\alpha\beta})$ be the dual correctors, given 
by Lemma \ref{1.15}.
Under the same assumption as in Lemma \ref{p-1},
one has $ \phi_{kij}^{\alpha\beta}\in L^\infty (Y)$.
\end{lemma}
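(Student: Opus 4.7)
The dual correctors were constructed in the proof of Lemma \ref{1.15} in the form
\[
\phi_{kij}^{\alpha\beta}=\frac{\partial f_{ij}^{\alpha\beta}}{\partial y_k}-\frac{\partial f_{kj}^{\alpha\beta}}{\partial y_i},
\]
where $f_{ij}^{\alpha\beta}$ is the unique $1$-periodic, mean-zero weak solution on the torus $Y$ of the scalar Poisson equation $\Delta_{d+1}f_{ij}^{\alpha\beta}=b_{ij}^{\alpha\beta}$. My plan is to push this representation through the $L^p$ theory for the Laplacian on the torus: if one can show $b_{ij}^{\alpha\beta}\in L^p(Y)$ for some $p>d+1$, then Calder\'on--Zygmund estimates give $f_{ij}^{\alpha\beta}\in W^{2,p}(Y)$, and the Sobolev embedding $W^{2,p}(Y)\hookrightarrow C^{1,\alpha}(Y)$ for $p>d+1$ yields $\nabla f_{ij}^{\alpha\beta}\in L^\infty(Y)$; from the displayed formula this immediately gives $\phi_{kij}^{\alpha\beta}\in L^\infty(Y)$.

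The remaining task is therefore to establish $b_{ij}^{\alpha\beta}\in L^p(Y)$ for some $p>d+1$. When $i=d+1$, $b_{(d+1)j}^{\alpha\beta}=-\chi_j^{\alpha\beta}$ is already in $L^\infty(Y)$ by Lemma \ref{p-1}. When $1\le i\le d$, the identity
\[
b_{ij}^{\alpha\beta}=a_{ij}^{\alpha\beta}+a_{ik}^{\alpha\gamma}\frac{\partial\chi_j^{\gamma\beta}}{\partial y_k}-\widehat{a}_{ij}^{\alpha\beta},
\]
together with $\|A\|_\infty\le\mu^{-1}$, reduces matters to showing $\nabla\chi\in L^p(Y)$ for some $p>d+1$. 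In the case $m\ge 2$ with $A\in \text{VMO}_x$, this is a direct consequence of the parabolic $W^{1,p}$ theory of \cite{Byun-2007, Krylov-2007} applied to (\ref{corrector}), which in fact allows any finite $p$. In the scalar case $m=1$, the reverse-H\"older-type estimate (\ref{p-2}) can be self-improved by a parabolic Gehring/Meyers argument to $\nabla\chi\in L^{2+\delta}(Y)$ for some $\delta>0$, and this gain can then be iterated through the corrector equation (\ref{corrector}) and the bound $\chi\in L^\infty(Y)$ from Lemma \ref{p-1} until the integrability exceeds $d+1$.

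I expect the principal obstacle to lie in the scalar case $m=1$: because $A$ is merely bounded measurable, the a priori control of $\nabla\chi$ is only at the $L^2$ level via (\ref{p-2}) and the Gehring self-improvement is small, so driving the integrability past the critical threshold $d+1$ requires a careful bootstrap that makes essential use of the $L^\infty$ bound on $\chi$ and the structure of (\ref{corrector}). Once $\nabla\chi\in L^p(Y)$ with $p>d+1$ is in hand, the remaining Calder\'on--Zygmund and Sobolev embedding steps for the scalar Laplacian on the periodic cell are entirely standard.
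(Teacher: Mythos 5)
Your strategy diverges from the paper's and contains a genuine gap in the scalar case.

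For $m\ge 2$ with $A\in\text{VMO}_x$, your argument is fine: the parabolic $W^{1,p}$ theory gives $\nabla\chi\in L^p(Y)$ for every finite $p$, hence $b_{ij}^{\alpha\beta}\in L^p$ for $p>d+1$, and the Calder\'on--Zygmund plus Sobolev chain you describe then delivers $\nabla f\in L^\infty$.

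The problem is the scalar case $m=1$. There, $A$ is merely bounded measurable, and the parabolic Meyers/Gehring improvement of (\ref{p-2}) yields only $\nabla\chi\in L^{2+\delta}(Y)$ with a \emph{small} $\delta>0$ depending on $\mu$; this exponent cannot in general be pushed beyond $2+\delta$, let alone past the threshold $d+1$. The ``bootstrap'' you invoke does not close: the corrector equation has the form $(\partial_s+\mathcal{L}_1)\chi=\operatorname{div}(F)$ with $F\in L^\infty$, and with only bounded measurable coefficients the integrability exponent for $\nabla\chi$ is fixed by the Meyers gain and cannot be iterated upward, even using $\chi\in L^\infty$. (In the elliptic setting this is precisely the phenomenon illustrated by the Meyers and Piccinini--Spagnolo examples, where the gradient integrability gain degenerates with the ellipticity ratio.) So $b_{ij}^{\alpha\beta}\in L^p(Y)$ with $p>d+1$ is not available in general when $m=1$, and the Sobolev-embedding step fails.

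The paper avoids this by using a Morrey--Campanato argument instead of an $L^p$ argument. The Nash H\"older estimate together with Caccioppoli gives the scale-invariant decay $\fint_{Q_r}|\nabla\chi|^2\le Cr^{2(\sigma-1)}$, i.e.\ a Morrey bound on $\nabla\chi$, and hence on $b$, which is neither implied by nor implies $b\in L^p$ with $p>d+1$. This Morrey decay is exactly what is needed to bound, pointwise in $(x,t)$, the Riesz-potential-type integral
\[
\int_Y\frac{|b_{ij}^{\alpha\beta}(y,s)|}{(|x-y|+|t-s|)^d}\,dy\,ds,
\]
after which the fundamental solution of $\Delta_{d+1}$ gives $\|\nabla_{y,s}f_{ij}^{\alpha\beta}\|_{L^\infty(Y)}<\infty$ directly, without passing through $W^{2,p}$. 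If you want to salvage your write-up, replace the $L^p$ bootstrap in the scalar case by this Morrey-to-potential estimate; the rest of your outline (representation $\phi=\partial_k f_{ij}-\partial_i f_{kj}$, reduction to bounding $\nabla f$) is consistent with the paper.
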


\begin{proof}
It follows from (\ref{p-2}) that if $(x, t)\in \R^{d+1}$ and $0<r<1$,
\begin{equation}\label{p-11}
\int_{Q_r (x, t)} |b_{ij}^{\alpha\beta}|^2 
\le C r^{d+2\sigma}
\end{equation}
for some $\sigma \in (0,1)$.
By covering the interval $(t-r, t)$ with intervals of length $r^2$, we
obtain 
$$
\int_{B_r (x, t)} |b_{ij}^{\alpha\beta}|^2 \le C r^{d-1 +2\sigma},
$$
where $B_r (x, t)=B(x, r)\times (t-r, t)$. 
Hence, by H\"older's inequality,
$$
\int_{B_r (x, t)} |b_{ij}^{\alpha\beta}|
\le C r^{d+\sigma}.
$$
Thus, for any $(x, t)\in Y$,
\begin{equation}\label{p-12}
\aligned
\int_Y \frac{|b_{ij}^{\alpha\beta} (y, s)|}{(|x-y| +|t-s|)^d}\, dyds
& \le
C \sum_{j=1}^\infty 2^{jd}
\int_{|y-x| +|t-s|\sim 2^{-j}}
 | b_{ij}^{\alpha\beta} (y, s)|\, dyds\\
&\le C.
\endaligned
\end{equation}
In view of (\ref{1.8-2}), by using the fundamental solution for $\Delta_{d+1}$ in $\R^{d+1}$,
we may show that
$$
\|\nabla_{y, s} f_{ij}^{\alpha\beta}\|_{L^\infty (Y)}
\le C \| \nabla_{y, s} f_{ij}^{\alpha\beta}\|_{L^2(Y)}
+\sup_{(x, t)\in Y} \int_{Y}
\frac{|b_{ij}^{\alpha\beta} (y, s)|}{ (|x-y| +|t-s|)^d}\, dyds,
$$
where $\nabla_{y, s}$ denotes the gradient in $\R^{d+1}$.
This, together with (\ref{p-12}), shows that $|\nabla_{y, s} f_{ij}^{\alpha\beta}|\in L^\infty (Y)$.
By (\ref{1.14}) we obtain $\phi_{kij}^{\alpha\beta}\in L^\infty(Y)$.
\end{proof}

\begin{remark}\label{re-2.0}
{\rm
Suppose $A=A(y, s)$ is H\"older continuous in $(y,s)$.
By (\ref{c-00}) and the standard  regularity theory for $\partial_s +\mathcal{L}_1$,
$\nabla \chi(y, s)$ is H\"older continuous in $(y, s)$.
It follows that $b_{ij}^{\alpha\beta} (y, s)$ is H\"older continuous in $(y, s)$.
In view of (\ref{1.8-2}) and (\ref{1.14}) one may deduce that
$\nabla_{y, s} \phi_{kij}^{\alpha\beta}$ is H\"older continuous in $(y, s)$.
This will be used in the proof of Theorems \ref{main-theorem-2} and \ref{main-theorem-3}.
}
\end{remark}

\begin{thm}\label{L-theorem}
Suppose that $A$ satisfies the conditions (\ref{ellipticity}) and (\ref{periodicity}).
If $m\ge 2$, we also assume $A\in \text{VMO}_x$.
Let $u_\e$ be a weak solution of $(\partial_t +\mathcal{L}_\e) u_\e =\text{\rm div} (f)$
in $Q_{2r}=Q_{2r}(x_0, t_0)$ for some $0<r<\infty$, where
$f=(f_i^\alpha)\in L^p(Q_{2r}; \R^{m\times d})$ for some $p>d+2$.
Then
\begin{equation}\label{L-estimate}
\|u_\e \|_{L^\infty(Q_r)}
\le C \left\{ \left(\fint_{Q_{2r}} |u_\e|^2\right)^{1/2}
+  r \left(\fint_{Q_{2r}} |f|^p\right)^{1/p} \right\},
\end{equation}
where $C$ depends only on $d$, $m$, $p$, $\mu$, and $A^\#$ in (\ref{VMO-x}) (if $m\ge 2$).
\end{thm}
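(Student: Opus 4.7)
Plan: I would derive (\ref{L-estimate}) as a consequence of a uniform-in-$\e$ interior H\"older estimate for the inhomogeneous equation, combined with the elementary observation that a H\"older seminorm plus an $L^2$ average controls the $L^\infty$ norm. By the parabolic rescaling $(x,t)\mapsto(rx,r^2t)$ (which replaces $\e$ by $\e/r$ but preserves the structural hypotheses and the family $\{A^\e\}$), it suffices to treat $r=1$.

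The substantive step is to establish, for some $\sigma\in(0,1)$ depending only on $d,m,p,\mu$, the uniform H\"older estimate
\[
[u_\e]_{C^{\sigma,\sigma/2}(Q_1)} \le C\Bigl(\fint_{Q_2}|u_\e|^2\Bigr)^{1/2} + C\Bigl(\fint_{Q_2}|f|^p\Bigr)^{1/p}.
\]
I would fix $(x_0,t_0)\in Q_1$ and $\rho\in(0,1/2]$, and on $Q_\rho(x_0,t_0)$ split $u_\e=v+w$ where $v$ is the weak solution of $(\partial_t+\mathcal{L}_\e)v=0$ with parabolic boundary data $u_\e$ and $w=u_\e-v$ vanishes on the parabolic boundary and satisfies $(\partial_t+\mathcal{L}_\e)w=\text{div}(f)$. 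For $v$, one has the uniform H\"older estimate (\ref{H-0}): in the scalar case this is Nash's classical bound; in the system case one invokes the estimate from \cite{Geng-Shen-2015} when $\rho\ge\e$ and the Byun--Krylov VMO$_x$ estimate \cite{Byun-2007,Krylov-2007} when $\rho<\e$, both with constants depending only on $\mu$, $p$, and $A^\#$. For $w$, the standard energy estimate plus Poincar\'e gives $\|w\|_{L^2(Q_\rho)}\le C\rho\|f\|_{L^2(Q_\rho)}$; H\"older's inequality then contributes a factor $|Q_\rho|^{1/2-1/p}\asymp\rho^{(d+2)(1/2-1/p)}$, so that
\[
\Bigl(\fint_{Q_\rho}|w|^2\Bigr)^{1/2} \le C\rho^{\,1-(d+2)/p}\Bigl(\fint_{Q_\rho}|f|^p\Bigr)^{1/p},
\]
with a positive exponent thanks to $p>d+2$. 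A Campanato iteration in $\rho$ combining the H\"older decay of $v$ with this improvement for $w$ then yields the displayed H\"older estimate.

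Given the H\"older estimate, the $L^\infty$ bound follows by elementary averaging: for any $(x_0,t_0),(y,s)\in Q_1$,
\[
|u_\e(x_0,t_0)| \le |u_\e(x_0,t_0)-u_\e(y,s)| + |u_\e(y,s)| \le C[u_\e]_{C^{\sigma,\sigma/2}(Q_1)} + |u_\e(y,s)|.
\]
Averaging over $(y,s)\in Q_1$ and bounding the $L^1$ average of $|u_\e|$ by its $L^2$ average (and in turn by the $L^2$ average over $Q_2$) yields
\[
\|u_\e\|_{L^\infty(Q_1)} \le C\Bigl(\fint_{Q_2}|u_\e|^2\Bigr)^{1/2} + C\Bigl(\fint_{Q_2}|f|^p\Bigr)^{1/p},
\]
and undoing the parabolic rescaling gives (\ref{L-estimate}).

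The main obstacle lies entirely in the H\"older estimate in the systems case $m\ge 2$: the Campanato iteration must switch character at scale $\rho=\e$, using only the VMO$_x$ modulus of $A$ for subepsilon radii (where \cite{Byun-2007,Krylov-2007} apply to $A^\e$ with constants depending on $A^\#$ but not on $\e$) and the periodic, compactness-based estimate of \cite{Geng-Shen-2015} for suprepsilon radii. Gluing these two regimes so that the final constant depends on $A$ only through $A^\#$ and $\mu$ and is uniform in $\e$ is the heart of the argument, and is exactly the scheme already carried out in \cite{Geng-Shen-2015}.
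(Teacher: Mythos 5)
Your proposal is correct and takes essentially the same approach as the paper: the paper's own proof is simply a citation to Nash's estimate for $m=1$ and to the uniform interior H\"older estimate of \cite[Theorem 1.1]{Geng-Shen-2015} for $m\ge 2$, and your Campanato-iteration reconstruction (switching at scale $\rho=\e$ between the Byun--Krylov $\text{VMO}_x$ estimate and the compactness-based suprascale estimate) is precisely the scheme underlying that cited theorem, which you acknowledge. One minor typo: in your $w$-estimate the factor $\rho^{1-(d+2)/p}$ is correct only if the right side carries $\bigl(\fint_{Q_2}|f|^p\bigr)^{1/p}$ rather than $\bigl(\fint_{Q_\rho}|f|^p\bigr)^{1/p}$; with the local average the bound is simply $C\rho\bigl(\fint_{Q_\rho}|f|^p\bigr)^{1/p}$, and one must pass to the fixed larger cube to extract the decaying power needed for the iteration.
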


\begin{proof}
If $m=1$, this follows from the well known Nash's estimate.
The periodicity is not needed.
If $m\ge 2$, (\ref{L-estimate}) follows from the uniform interior H\"older estimate
in \cite[Theorem 1.1]{Geng-Shen-2015}.
\end{proof}

Under the assumptions on $A$ in  Theorem \ref{L-theorem},
 the matrix of fundamental solutions for 
 $\partial_t +\mathcal{L}_\e$ in $\R^{d+1}$ exists and satisfies the Gaussian estimate (\ref{f-0-1}).
 This follows from the $L^\infty$ estimate (\ref{L-estimate}) by a general result in \cite{Hofmann-2004}
 (also see \cite{Auscher, Dong-2008}).
 
 \begin{thm}\label{Lip-theorem}
 Suppose that $A$ satisfies conditions (\ref{ellipticity}) and (\ref{periodicity}).
 Also assume that $A$ satisfies the H\"older condition (\ref{H}).
 Let $u_\e$ be a weak solution of $(\partial_t +\mathcal{L}_\e)u_\e=F$
 in $Q_{2r}=Q_{2r} (x_0, t_0)$ for some $0<r<\infty$, where $F\in L^p(Q_{2r}; \R^m)$ for some
 $p>d+2$. Then
 \begin{equation}\label{Lip-estimate}
 \|\nabla u_\e\|_{L^\infty(Q_r)}
 \le C\left\{ \frac{1}{r} \left(\fint_{Q_{2r}} |u_\e|^2\right)^{1/2}
+  r \left(\fint_{Q_{2r}} |F|^p\right)^{1/p} \right\},
\end{equation}
where $C$ depends only on $d$, $m$, $p$, $\mu$, and $(\lambda, \tau) $ in (\ref{H}).
\end{thm}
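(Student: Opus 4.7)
The plan is to adapt the three-step compactness argument of Avellaneda--Lin to the parabolic setting, leveraging the correctors $\chi$ and dual correctors $\phi$ from Lemmas \ref{p-1} and \ref{1.15} together with the Hölder hypothesis (\ref{H}). By the parabolic rescaling $v(x,t) = r^{-1} u_\e(x_0 + rx, t_0 + r^2 t)$, which replaces $A^\e$ by $A^{\e/r}$ and $F$ by $r F(x_0 + rx, t_0 + r^2 t)$, it suffices to prove the estimate for $r=1$, $Q_2 = Q_2(0,0)$, with the conclusion $\|\nabla v\|_{L^\infty(Q_1)} \le C\{(\fint_{Q_2}|v|^2)^{1/2} + (\fint_{Q_2} |F|^p)^{1/p}\}$.

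The first and central step is a compactness / excess-decay lemma: there exist $\theta \in (0,1/4)$, $\rho \in (0,1)$ and $\e_0 > 0$ such that for every $\e \in (0,\e_0]$ and every weak solution of $(\partial_t+\mathcal{L}_\e)u_\e = F$ in $Q_2$ with $\fint_{Q_2}|u_\e|^2 \le 1$ and $\|F\|_{L^p(Q_2)} \le 1$, one can find an affine function $P(x) = a + b\cdot x$ with $|a|+|b| \le C_0$ such that
\begin{equation*}
\Big(\fint_{Q_\theta} |u_\e - P|^2\Big)^{1/2} \le \theta^{1+\rho}.
\end{equation*}
This is proved by contradiction: given a putative counterexample sequence $\e_k \to 0$, $u_k$, $F_k$, the Caccioppoli inequality gives a uniform $H^1$ bound on $Q_{3/2}$, hence weak $H^1$ and strong $L^2$ convergence along a subsequence to some limit $u_0$. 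The key identification is that $u_0$ satisfies the homogenized equation $(\partial_t + \mathcal{L}_0)u_0 = 0$, obtained by passing to the limit in the weak formulation and using the correctors together with the skew-symmetric $\phi^{\alpha\beta}_{kij}$ to identify the weak limit of $A^{\e_k}\nabla u_{\e_k}$ as $\widehat{A}\nabla u_0$ in the spirit of the div-curl lemma. Classical Schauder for the constant-coefficient system $\partial_t + \mathcal{L}_0$ then produces an affine $P$ with $(\fint_{Q_\theta}|u_0 - P|^2)^{1/2} \le C\theta^{1+\rho}$, and choosing $\theta$ so that $C\theta^\rho \le \tfrac12$ contradicts the assumption via the strong $L^2$ convergence.

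The second step iterates the excess decay on dyadic scales down to the scale $\e$. Writing $H(r) = r^{-1}\inf_{P \text{ affine}} (\fint_{Q_r} |u_\e - P|^2)^{1/2}$ and applying the lemma to $v_k(x,t) = \theta^{-k} u_\e(\theta^k x, \theta^{2k} t)$ (which solves the equation with parameter $\e/\theta^k$ as long as this remains $\le \e_0$), one obtains
\begin{equation*}
H(\theta^{k+1}) \le \theta^\rho H(\theta^k) + C\theta^{k}\|F\|_{L^p(Q_{\theta^k})},
\end{equation*}
and hence $H(r) \le C r^\rho [H(1) + \|F\|_{L^p(Q_1)}]$ for all $r \in [\e/\e_0, 1]$. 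For scales below $\e/\e_0$, the blow-up $w(x,t) = \e^{-1} u_\e(\e x, \e^2 t)$ satisfies a parabolic system with the $\e$-free Hölder coefficient matrix $A(x,t)$ and right-hand side $\e F(\e x, \e^2 t)$; classical Schauder theory then gives a Lipschitz estimate for $w$, which rescales to control $H(r)$ on $r \le \e/\e_0$. Combining the two regimes yields the Campanato-type bound $\sup_{0<r\le 1} H(r) < \infty$ at every interior point, which by Campanato's characterization is equivalent to $\nabla u_\e \in L^\infty(Q_1)$ with the claimed estimate.

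The main obstacle is the compactness step, specifically the passage to the limit in $A^{\e_k}\nabla u_{\e_k}$: in the time-dependent setting the standard div-curl argument must absorb the derivative $\partial_s \chi$, and this is exactly where Lemma \ref{1.15} and the $L^\infty$ bound on $\phi^{\alpha\beta}_{kij}$ in Lemma \ref{p-10} enter, with the Hölder hypothesis (\ref{H}) (via Remark \ref{re-2.0}) providing the regularity needed to justify the manipulations. Once that homogenization step is in hand, the rest of the proof is a fairly mechanical iteration.
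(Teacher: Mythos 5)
The paper does not prove this theorem; it simply cites \cite[Theorem 1.2]{Geng-Shen-2015}, which establishes the uniform Lipschitz estimate by adapting the Avellaneda--Lin compactness scheme to the parabolic setting, so your overall strategy is the right one. The iteration step, however, is not correctly set up. You apply the one-step excess-decay lemma to $v_k(x,t)=\theta^{-k}u_\e(\theta^k x,\theta^{2k}t)$, but the lemma's hypothesis $\fint_{Q_2}|v_k|^2\le 1$ is not available -- nothing controls the renormalized $L^2$ norm of $u_\e$ on $Q_{\theta^k}$. One must instead apply the lemma to $u_\e$ minus the accumulated approximant from the previous step; and if that approximant is a \emph{purely affine} $P_k$, the difference $u_\e-P_k$ does not solve a parabolic system of the required form, since $(\partial_t+\mathcal{L}_\e)(u_\e-P_k)=F+\mathrm{div}(A^\e\nabla P_k)$ and $\nabla P_k$ is a fixed nonzero constant. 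The standard fix, which is missing from your sketch, is to subtract the \emph{corrected} affine function $P_k(x)+\e\chi(x/\e,t/\e^2)\nabla P_k$, which satisfies $(\partial_t+\mathcal{L}_\e)\big(\e\chi^\e\nabla P_k+P_k\big)=0$ by (\ref{c-00}) and rescaling; then $u_\e-P_k-\e\chi^\e\nabla P_k$ solves the same equation and the lemma applies. Tracking the $O(\e\|\chi\|_\infty|\nabla P_k|)$ discrepancy between the affine and corrected-affine approximants, together with the accumulated size of $|\nabla P_k|$, changes your recursion to something like
$H(\theta^{k+1})\le\theta^\rho H(\theta^k)+C\theta^{k(1-\frac{d+2}{p})}\|F\|_{L^p(Q_1)}+C\e\theta^{-k}$, whose correct consequence is $\sup_{\e\le r\le 1}H(r)\le C\big[H(1)+\|F\|_{L^p(Q_1)}\big]$ -- boundedness, not the decay $H(r)\le Cr^\rho[\cdots]$ that you claim. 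Indeed, decay of $H$ down to scale $r\sim\e$ would assert genuine $C^{1,\rho}$ regularity of $u_\e$ at the center, which is false in the oscillatory regime; only boundedness survives, and boundedness is exactly what the Campanato characterization of Lipschitz requires.

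Two smaller points. In the compactness step, the Caccioppoli bound alone does not give strong $L^2$ convergence; one also needs $\partial_t u_k$ bounded in $L^2(H^{-1})$ (which follows from the equation) and Aubin--Lions. And the Hölder hypothesis (\ref{H}) plays no role in the compactness lemma or the identification of the homogenized limit, contrary to what your last paragraph suggests; ellipticity and periodicity suffice there, together with $\chi\in H^1_{\rm per}$ and the $\phi$ structure of Lemma \ref{1.15}. Condition (\ref{H}) is used \emph{only} in the blow-up step at scales $r\lesssim\e$, where classical parabolic Schauder theory is applied to the $\e$-free rescaled equation.
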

 
 \begin{proof}
 This was proved in \cite[Theorem 1.2]{Geng-Shen-2015}.
 \end{proof}
 
 The Lipschitz estimate (\ref{Lip-estimate}) allows us to bound
 $\nabla_x \Gamma_\e (x, t; y, s)$,
 $\nabla_y \Gamma_\e (x, t; y, s)$
 and $\nabla_x\nabla_y \Gamma_\e (x, t; y, s)$.
 
 \begin{thm}\label{D-bound}
 Assume that $A$ satisfies the same conditions as in Theorem \ref{Lip-theorem}.
 Then
 \begin{equation}\label{D-1}
 | \nabla_x \Gamma_\e (x, t; y, s)|
 +| \nabla_y \Gamma_\e (x, t; y, s)|
 \le 
 \frac{C}{ (t-s)^{\frac{d+1}{2}}} \exp\left\{ -\frac{\kappa |x-y|^2}{t-s}\right\},
 \end{equation}
 \begin{equation}\label{D-2}
 | \nabla_x \nabla_y \Gamma_\e (x, t; y, s)|
 \le 
 \frac{C}{ (t-s)^{\frac{d+2}{2}}} \exp\left\{ -\frac{\kappa |x-y|^2}{t-s}\right\},
 \end{equation}
 for any $x, y\in \R^d$ and $-\infty<s<t<\infty$,
 where $\kappa>0$ depends only on $\mu$.
 The constant $C$ depends  on $d$, $m$, $\mu$, and $(\lambda, \tau) $ in (\ref{H}).
\end{thm}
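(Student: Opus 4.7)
The plan is to derive all three estimates by combining the uniform Lipschitz estimate of Theorem \ref{Lip-theorem} with the Gaussian bound (\ref{f-0-1}), using the fact that away from the pole, $\Gamma_\e(\,\cdot\,,\cdot\,;y,s)$ and its $y$-derivatives satisfy the homogeneous system. Concretely, for any $(x,t)$ with $t>s$, I would work on the parabolic cylinder $Q_{2r}(x,t)$ with $r=\tfrac14\sqrt{t-s}$, which lies in $\R^d\times(s,\infty)$ because $t-4r^2=s+\tfrac34(t-s)$, and hence avoids the pole.

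For the bound on $\nabla_x\Gamma_\e$, apply Theorem \ref{Lip-theorem} with $F\equiv 0$ to $u_\e(z,\tau)=\Gamma_\e(z,\tau;y,s)$ on $Q_{2r}(x,t)$ to get
\[
|\nabla_x\Gamma_\e(x,t;y,s)|\le \frac{C}{r}\left(\fint_{Q_{2r}(x,t)}|\Gamma_\e(z,\tau;y,s)|^2\,dz\,d\tau\right)^{1/2}.
\]
I would then insert (\ref{f-0-1}) pointwise and split into two regimes: if $|x-y|\ge\sqrt{t-s}$, then for $z\in B(x,2r)$ we have $|z-y|\ge |x-y|/2$, and combined with $\tau-s\sim t-s$ the Gaussian factor transfers to $\exp\{-\kappa'|x-y|^2/(t-s)\}$ with a smaller $\kappa'$; if $|x-y|<\sqrt{t-s}$, the desired exponential factor is bounded below by $e^{-\kappa}$ and may be inserted for free. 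This yields (\ref{D-1}) for $\nabla_x$. The bound on $\nabla_y\Gamma_\e$ then follows immediately from the adjoint identity (\ref{f-0-6}): since $\widetilde{A}$ satisfies the same hypotheses as $A$, the $\nabla_x$ bound applied to $\widetilde{\Gamma}_\e$ gives the $\nabla_y$ bound on $\Gamma_\e$ after the change of variables $(x,t)\leftrightarrow(y,-s)$, which preserves the Gaussian kernel.

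For (\ref{D-2}), I would fix $(y,s)$ and apply the same scheme to the function $v_\e(z,\tau):=\nabla_y\Gamma_\e(z,\tau;y,s)$ on $Q_{2r}(x,t)$. The Lipschitz estimate gives
\[
|\nabla_x\nabla_y\Gamma_\e(x,t;y,s)|\le \frac{C}{r}\left(\fint_{Q_{2r}(x,t)}|\nabla_y\Gamma_\e(z,\tau;y,s)|^2\,dz\,d\tau\right)^{1/2},
\]
and inserting the Gaussian bound on $\nabla_y\Gamma_\e$ just established, together with the same two-case argument on the exponential factor, produces the $(t-s)^{-(d+2)/2}$ rate with Gaussian decay in $|x-y|$. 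The main technical point is to justify that $v_\e$ is a weak solution of $(\partial_t+\mathcal{L}_\e)v_\e=0$ on $Q_{2r}(x,t)$; I would handle this by viewing $v_\e$ as the limit of the difference quotients $h^{-1}[\Gamma_\e(\cdot,\cdot;y+he_j,s)-\Gamma_\e(\cdot,\cdot;y,s)]$, each of which is a solution on the complement of the two poles, and passing to the limit in the weak formulation using the uniform Gaussian bound on $\nabla_y\Gamma_\e$ from the previous step to control the convergence on $Q_{2r}(x,t)$. This is the only nontrivial step; the rest reduces to bookkeeping of constants in the exponential factor.
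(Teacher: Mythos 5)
Your proof is correct and follows essentially the same route as the paper: apply the uniform Lipschitz estimate (Theorem \ref{Lip-theorem}) with zero right-hand side on a cylinder $Q_{2r}(x,t)$ of radius $r\sim\sqrt{t-s}$ that avoids the pole, insert the Gaussian bound (\ref{f-0-1}), use the adjoint identity (\ref{f-0-6}) for $\nabla_y$, and then bootstrap to $v_\e=\nabla_y\Gamma_\e$ for the mixed derivative. The two-regime argument you give to transfer the exponential factor, and the difference-quotient justification that $v_\e$ is a weak solution, are precisely the details the paper's terse proof leaves implicit.
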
 

\begin{proof}
Fix $x_0, y_0\in \R^d$ and $s_0<t_0$.
Let $u_\e (x, t)=\Gamma_\e (x, t; y_0, s_0)$.
Then $(\partial_t +\mathcal{L}_\e) u_\e =0$ in $Q_{2r} (x_0, t_0)$,
where $r=\sqrt{t_0-s_0}/8$.
The estimate for $|\nabla_x \Gamma (x_0, t_0; y_0, s_0)|$
now follows from (\ref{Lip-estimate}) and (\ref{f-0-1}) (with a different $\kappa$).
In view of (\ref{f-0-6}) this also gives the estimate for $|\nabla_y \Gamma_\e (x_0, t_0; y_0, s_0)|$.
Finally, to see (\ref{D-2}),
we let $v_\e (x, t)=\nabla_y \Gamma_\e (x, t; y_0, s_0)$.
Then $(\partial_t +\mathcal{L}_\e) v_\e =0$ in $Q_{2r} (x_0, t_0)$.
By applying (\ref{Lip-estimate}) to $v_\e$ and using the estimate
in (\ref{D-1}) for $\nabla_y \Gamma _\e (x, t; y, s)$, we obtain the desired estimate
for $|\nabla_x \nabla_y \Gamma_\e (x_0, t_0; y_0, s_0)|$.
\end{proof}

%%%%%%%%%%%%%%%%%%%%%%%%%%%%%%%%%%%%%%%%%%%%

%%%%%%%%%%%%%%%%%%%%%%%%%%%%

\section{A two-scale expansion}\label{section-3}

 Suppose that
 \begin{equation}\label{e-1}
 (\partial_t +\mathcal{L}_\e) u_\e
 =(\partial_t +\mathcal{L}_0) u_0
 \end{equation}
 in $\Omega \times (T_0, T_1)$, where $\Omega\subset \R^d$.
 Let $S_\e$ be a linear operator to be chosen later.
 Following \cite{Geng-Shen-2017},
 we consider the two-scale expansion $w_\e =(w_\e^\alpha)$, where
\begin{equation}\label{w}
\aligned
w_\e ^\alpha (x, t)  = u_\e^\alpha (x, t) -u_0^\alpha  (x, t) &  -\e \chi_j^{\alpha\beta} (x/\e, t/\e^2)
S_\e \left(\frac{\partial u_0^\beta}{\partial x_j}\right)\\
&-\e^2 \phi_{(d+1) ij}^{\alpha\beta} (x/\e, t/\e^2)
\frac{\partial}{\partial x_i} S_\e \left(\frac{\partial u_0^\beta}{\partial x_j}\right),
\endaligned
\end{equation}
and $\chi_j^{\alpha\beta}$, $\phi_{(d+1)ij}^{\alpha\beta}$
are the correctors and dual correctors introduced in the last section. 
The repeated indices $i, j$ in (\ref{w}) are summed from $1$ to $d$.

\begin{prop}\label{e-2}
Let $u_\e\in L^2(T_0, T_1; H^1(\Omega))$ and
$u_0 \in L^2(T_0, T_1; H^2(\Omega))$.
Let  $w_\e$ be defined by (\ref{w}).
 Assume (\ref{e-1}) holds in $\Omega \times (T_0, T_1)$.
Then
\begin{equation}\label{e-3}
(\partial_ t +\mathcal{L}_\e) w_\e =\e\, \text{\rm div}(F_\e)
\end{equation}
in $\Omega \times (T_0, T_1)$, where $F_\e = (F_{\e, i}^\alpha)$ and
\begin{equation}\label{e-4}
\aligned
F_{\e, i}^\alpha (x, t)
 = &  \e^{-1} \left( a_{ij}^{\alpha\beta} (x/\e, t/\e^2)-\widehat{a}_{ij}^{\alpha\beta} \right)
\left( \frac{\partial u_0^\beta}{\partial x_j} -S_\e \left(\frac{\partial u_0^\beta}{\partial x_j}\right)\right)\\
& + a_{ij}^{\alpha\beta} (x/\e, t/\e^2) \chi_k^{\beta\gamma} (x/\e, t/\e^2) 
\frac{\partial}{\partial x_j} S_\e \left(\frac{\partial u_0^\gamma}{\partial x_k}\right)\\
& +  \phi_{ikj}^{\alpha\beta} (x/\e, t/\e^2)
\frac{\partial}{\partial x_k} S_\e\left(\frac{\partial u_0^\beta}{\partial x_j}\right)\\
&+\e \phi_{i(d+1)j}^{\alpha\beta}(x/\e, t/\e^2) \partial_t S_\e \left(\frac{\partial u_0^\beta}{\partial x_j}\right)\\
& - a_{ij}^{\alpha\beta} (x/\e, t/\e^2) \left(\frac{\partial }{\partial x_j}
(\phi^{\beta\gamma}_{(d+1)\ell k}) \right)(x/\e, t/\e^2)
\frac{\partial}{\partial x_\ell} S_\e \left(\frac{\partial u_0^\gamma}{\partial x_k}\right)\\
& -\e a_{ij}^{\alpha\beta} (x/\e, t/\e^2)
\phi_{(d+1) \ell k}^{\beta\gamma} (x/\e, t/\e^2)
\frac{\partial^2}{\partial x_j \partial x_\ell}
S_\e \left(\frac{\partial u_0^\gamma}{\partial x_k}\right).
\endaligned
\end{equation}
The repeated indices  $i, j, k, \ell$ are summed from  $1$ to $d$.
\end{prop}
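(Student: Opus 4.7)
The plan is a direct computation: apply $\partial_t + \mathcal{L}_\e$ to each of the four summands in (\ref{w}), expand via the product rule, and repackage the result as the spatial divergence $\e\,\mathrm{div}(F_\e)$ by invoking the corrector equation (\ref{c-00}) together with the skew-symmetric potential from Lemma \ref{1.15}. First, by the hypothesis $(\partial_t + \mathcal{L}_\e)u_\e = (\partial_t + \mathcal{L}_0)u_0$, the leading piece is $(\partial_t + \mathcal{L}_\e)(u_\e - u_0)^\alpha = \partial_{x_i}[(a_{ij}^{\alpha\beta,\e} - \widehat{a}_{ij}^{\alpha\beta})\partial_{x_j}u_0^\beta]$; writing $h_j^\beta := S_\e(\partial_{x_j}u_0^\beta)$ and splitting $\partial_{x_j}u_0^\beta = (\partial_{x_j}u_0^\beta - h_j^\beta) + h_j^\beta$ under the divergence isolates the first term of $F_\e$ (its $\e^{-1}$ factor being matched by the outer $\e$).

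For the corrector summand $\e\chi_j^{\beta,\e}h_j^\beta$, the time derivative generates the singular factor $\e^{-1}(\partial_s\chi)^\e$, which is killed by the rescaled form of (\ref{c-00}):
\[
\e^{-1}(\partial_s\chi_j^{\alpha\beta})^\e = \partial_{x_i}\!\bigl[a_{ij}^{\alpha\beta,\e} + a_{ik}^{\alpha\gamma,\e}(\partial_{y_k}\chi_j^{\gamma\beta})^\e\bigr].
\]
Applying the product rule to $\mathcal{L}_\e(\e\chi_j^{\beta,\e}h_j^\beta)$ and combining with this identity, two cross terms of the form $\partial_{x_i}[a^\e(\partial_{y_k}\chi)^\e h]$ cancel exactly, and the pointwise coefficient of $\partial_{x_i}h_j^\beta$ collapses precisely to $b_{ij}^{\alpha\beta,\e}$ by (\ref{b}); the surviving divergence $\e\partial_{x_i}[a_{ij}^{\alpha\beta,\e}\chi_k^{\beta\gamma,\e}\partial_{x_j}h_k^\gamma]$ is the second term of $F_\e$. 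The core step is then to rewrite the residual $b_{ij}^{\alpha\beta,\e}\partial_{x_i}h_j^\beta$ as a divergence. Using Lemma \ref{1.15} in rescaled form,
\[
b_{ij}^{\alpha\beta,\e} = \e\sum_{k=1}^d \partial_{x_k}\phi_{kij}^{\alpha\beta,\e} + \e^2\,\partial_t\phi_{(d+1)ij}^{\alpha\beta,\e},
\]
the antisymmetry $\phi_{kij}^{\alpha\beta} = -\phi_{ikj}^{\alpha\beta}$ tested against the symmetric Hessian $\partial_{x_k}\partial_{x_i}h_j^\beta$ makes the spatial product-rule error vanish, producing (after relabeling) the third term of $F_\e$, while the time part contributes a factor $(\partial_s\phi_{(d+1)ij}^{\alpha\beta})^\e\partial_{x_i}h_j^\beta$ that is precisely what the $\partial_t$ piece of the dual-corrector summand $\e^2\phi_{(d+1)ij}^{\alpha\beta,\e}\partial_{x_i}h_j^\beta$ in $w_\e$ is designed to cancel --- this is the very reason the dual corrector was introduced in (\ref{w}). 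The $\mathcal{L}_\e$ piece of that summand expands into the last two terms of $F_\e$, and the remaining transport-type contributions $\e\chi_j^{\alpha\beta,\e}\partial_t h_j^\beta$ and $\e^2\phi_{(d+1)ij}^{\alpha\beta,\e}\partial_t\partial_{x_i}h_j^\beta$ are recognized as $\e\partial_{x_i}[\e\phi_{i(d+1)j}^{\alpha\beta,\e}\partial_t h_j^\beta]$ via the identity $\sum_{i=1}^d\partial_{y_i}\phi_{i(d+1)j}^{\alpha\beta} = b_{(d+1)j}^{\alpha\beta} = -\chi_j^{\alpha\beta}$ together with skew-symmetry, yielding the fourth term of $F_\e$.

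The main obstacle is bookkeeping: with matrix-valued indices, two correctors, the smoothing operator $S_\e$, and a distinguished ``$(d+1)$''-coordinate reflecting the parabolic scaling, the expansion contains many terms whose signs and index labels must be tracked with care. The three non-trivial cancellations to verify are (a) the $\e^{-1}$-singular term generated by $\partial_t\chi^\e$ being neutralized by the rescaled corrector equation, (b) the antisymmetry-based vanishing of $\sum_{k,i=1}^d\phi_{kij}^{\alpha\beta,\e}\partial_{x_k}\partial_{x_i}h_j^\beta$, and (c) the matching of the time-derivative residue against $\sum_{i=1}^d\partial_{y_i}\phi_{i(d+1)j}^{\alpha\beta} = -\chi_j^{\alpha\beta}$. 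Everything else is routine but tedious algebra.
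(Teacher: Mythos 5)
Your computation follows exactly the route the paper intends (the paper itself only cites \cite[Theorem 2.2]{Geng-Shen-2017} rather than carrying it out), and your identification of the three delicate cancellations is correct: (a) the rescaled corrector equation $\e^{-1}(\partial_s\chi_j^{\alpha\beta})^\e = \partial_{x_i}\bigl[(a_{ij}^{\alpha\beta}+a_{ik}^{\alpha\gamma}\partial_{y_k}\chi_j^{\gamma\beta})^\e\bigr]$ absorbing the $O(\e^{-1})$ singularity and leaving the coefficient $b_{ij}^{\alpha\beta,\e}$ on $\partial_{x_i}h_j^\beta$; (b) the antisymmetry $\phi_{kij}=-\phi_{ikj}$ killing the Hessian term; (c) the $\partial_s\phi_{(d+1)ij}$ residue from the $\phi$ summand exactly cancelling the time part of $b_{ij}^{\alpha\beta,\e}=\e\sum_{k\le d}\partial_{x_k}\phi^{\alpha\beta,\e}_{kij}+\e^2\partial_t\phi^{\alpha\beta,\e}_{(d+1)ij}$, together with $b_{(d+1)j}^{\alpha\beta}=-\chi_j^{\alpha\beta}=\sum_{k\le d}\partial_{y_k}\phi^{\alpha\beta}_{k(d+1)j}$ and another application of antisymmetry producing the fourth term of $F_\e$. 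This is exactly the argument one would write if one unpacked the cited result, so the approach is essentially the same.

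One small caution worth flagging: in the penultimate step you assert that the $\mathcal{L}_\e$ action on the (subtracted) dual--corrector summand ``expands into the last two terms of $F_\e$'' without tracking the sign. If you carry it through, $-\mathcal{L}_\e\bigl[\e^2\phi^{\alpha\beta,\e}_{(d+1)\ell k}\partial_{x_\ell}h_k^\beta\bigr]$ is a \emph{positive} divergence $+\e\,\partial_{x_i}\bigl[a_{ij}^{\alpha\beta,\e}(\partial_{y_j}\phi^{\beta\gamma}_{(d+1)\ell k})^\e\partial_{x_\ell}h_k^\gamma\bigr]+\e^2\partial_{x_i}\bigl[a_{ij}^{\alpha\beta,\e}\phi^{\beta\gamma,\e}_{(d+1)\ell k}\partial_{x_j}\partial_{x_\ell}h_k^\gamma\bigr]$, whereas (\ref{e-4}) prints the last two summands of $F_\e$ with minus signs. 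Either (\ref{e-4}) carries a typographical sign slip or there is a convention mismatch with \cite{Geng-Shen-2017}; in any case, it is worth verifying the signs explicitly rather than matching by pattern, since this is exactly where a bookkeeping error would hide. The discrepancy is harmless for the rest of the paper (Theorem \ref{e-10} and everything downstream only use $|F_\e|$), but a self-contained proof should get it right.
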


\begin{proof}
This proposition was proved in \cite[Theorem 2.2]{Geng-Shen-2017}.
\end{proof}

We now introduce a parabolic  smoothing operator.
Let 
$$
\mathcal{O}=\{ (x,t)\in \R^{d+1}: |x|^2 +|t|\le 1 \}.
$$
Fix a nonnegative function  $\theta=\theta (x,t)
\in C_0^\infty(\mathcal{O})$ such that 
$\int_{\mathbb{R}^{d+1}}\theta=1$. 
Let $\theta_\e (x, t)=\e^{-d-2} \theta (x/\e, t/\e^2)$.
Define
\begin{equation}\label{1.18}
S_\varepsilon(f)(x,t)=f* \theta_\e (x, t)=\int_{\mathbb{R}^{d+1}}f(x-y,t-s)
\theta_\e (y,s)\, dyds.
\end{equation}

\begin{lemma}\label{lemma-S-3}
Let $g=g(x,t)$ be a 1-periodic function in $(x,t)$ and
 $\psi=\psi (x)$ a bounded Lipschitz function in $\R^d$.
 Then
\begin{align}\label{1.22}
\| e^\psi g^\e S_\varepsilon (f)\|_{L^p({\mathbb{R}^{d+1}})}\leq C\, e^{\e \|\nabla \psi\|_\infty}
\| g \|_{L^p(Y)}  \| e^\psi f \|_{L^p({\mathbb{R}^{d+1}})}
\end{align}
for any $1\le p<\infty$,
where  $g^\e (x, t)= g(x/\e, t/\e^2)$ and $C$ depends only on $d$ and $p$.
\end{lemma}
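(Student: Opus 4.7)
The plan is to combine two ingredients: a pointwise reduction that strips off the exponential weight using the Lipschitz control on $\psi$, and an unweighted $L^p$ product estimate for the periodic factor against the smoothing operator. For the first ingredient, note that $\theta_\e$ is supported in the parabolic cylinder $\{(y,s): |y|\le \e,\ |s|\le \e^2\}$, so whenever $(y,s)$ lies in this support, $|\psi(x)-\psi(x-y)|\le \e\|\nabla\psi\|_\infty$. Hence pointwise
\begin{equation*}
e^{\psi(x)}|S_\e(f)(x,t)| \le e^{\e\|\nabla\psi\|_\infty}\, S_\e(e^\psi |f|)(x,t).
\end{equation*}
Multiplying by $|g^\e|$ and taking $L^p$-norms reduces (\ref{1.22}) to the unweighted estimate
\begin{equation*}
\|g^\e S_\e(h)\|_{L^p(\R^{d+1})}\le C\, \|g\|_{L^p(Y)}\, \|h\|_{L^p(\R^{d+1})},
\end{equation*}
which will then be applied with $h=e^\psi |f|$.

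To prove the unweighted estimate I would first invoke Jensen's inequality: since $\theta_\e\ge 0$ and $\int\theta_\e = 1$, the pointwise bound $|S_\e(h)|^p \le S_\e(|h|^p)$ holds. Combining this with Fubini and a translation of variables gives
\begin{equation*}
\int_{\R^{d+1}} |g^\e|^p |S_\e(h)|^p\, dxdt \le \int_{\R^{d+1}} |h(x,t)|^p\, K_\e(x,t)\,dxdt,
\end{equation*}
where $K_\e(x,t)=\int |g^\e(x+y,t+s)|^p\, \theta_\e(y,s)\, dyds$. It therefore suffices to establish the uniform pointwise bound $\|K_\e\|_{L^\infty(\R^{d+1})}\le C\|g\|_{L^p(Y)}^p$. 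Under the parabolic rescaling $(y,s)\mapsto(\e y',\e^2 s')$, this integral becomes
\begin{equation*}
K_\e(x,t) = \int_{\mathcal{O}} |g(x/\e + y',\, t/\e^2 + s')|^p\, \theta(y',s')\, dy' ds',
\end{equation*}
which, by the boundedness and compact support of $\theta$ together with the $1$-periodicity of $g$, is controlled uniformly in $(x,t)$ by a constant multiple of $\|g\|_{L^p(Y)}^p$ (the translated integration region covers a bounded number of unit periodicity cells).

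No step is particularly delicate; the only piece of bookkeeping worth highlighting is the parabolic scaling mismatch between $S_\e$, which averages over a time window of size $\e^2$, and $\psi$, which depends only on $x$ and is Lipschitz. This mismatch is precisely why the loss in the weight is $e^{\e\|\nabla\psi\|_\infty}$ rather than $e^{\e^2\|\nabla\psi\|_\infty}$ in the final bound.
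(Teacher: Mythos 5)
Your proof is correct and follows essentially the same route as the paper: strip the weight pointwise using the Lipschitz bound on $\psi$ together with the fact that $\theta_\e$ is supported where $|y|\le\e$, apply Jensen/H\"older to get $|S_\e(h)|^p\le S_\e(|h|^p)$, and then use Fubini plus the $1$-periodicity of $g$ to bound the resulting kernel (your $K_\e$, the paper's $\sup_{(y,s)}\int |g^\e|^p\,\theta_\e(\cdot-y,\cdot-s)$) uniformly by $C\|g\|_{L^p(Y)}^p$. The only difference is cosmetic ordering of the steps, so nothing further is needed.
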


\begin{proof}
 Using $\int_{\R^{d+1}} \theta_\e =1$ and H\"older's inequality, we obtain 
$$
|S_\e (e^{-\psi} f) (x, t)|^p
\le \int_{\R^{d+1}} |e^{-\psi (y)} f(y, s)|^p \, \theta_\e (x-y, t-s)\, dyds.
$$
It follows that 
$$
\aligned
|e^{\psi (x)} S_\e (e^{-\psi} f) (x, t)|^p
 & \le \int_{\R^{d+1}} |e^{\psi (x)-\psi (y)} f(y, s)|^p \, \theta_\e (x-y, t-s)\, dyds\\
& \le e^{ \e  p\|\nabla \psi\|_\infty} 
\int_{\R^{d+1}} | f(y, s)|^p \, \theta_\e (x-y, t-s)\, dyds,
\endaligned
$$
where we have used the facts that
$|\psi (x)-\psi(y)|\le \|\nabla \psi\|_\infty |x-y|$
and $\theta_\e (x-y, t-s)=0$ if $|x-y|>\e$, for the last step. Hence,
by Fubini's Theorem, 
$$
\aligned
 &\int_{\R^{d+1}} |g^\e(x,t)|^p | e^\psi S_\e (e^{-\psi} f)(x, t)|^p\, dx dt\\
& \le  e^{\e p \|\nabla \psi\|_\infty}
\sup_{(y, s)\in \R^{d+1}}
\int_{\R^{d+1}} | g^\e(x, t)|^p \theta_\e (x-y, t-s) \, dx dt \int_{\R^{d+1}} | f(y, s)|^p\, dyds\\
&\le C\,  e^{\e p\|\nabla \psi\|_\infty}
\|  g \|^p_{L^p(Y)} 
\| f \|_{L^p(\R^{d+1})}^p,
\endaligned
$$
where $C$ depends only on $d$.
This gives (\ref{1.22}).
\end{proof}

\begin{remark}\label{remark-S}
{\rm
Let $\Omega\subset \R^d$ and $(T_0, T_1)\subset \R$.
Define
\begin{equation}\label{O-e}
\Omega_\e =\big\{ x\in \R^d: \, \text{\rm dist}(x, \Omega)<\e\big\}.
\end{equation}
Observe that for $(x, t)\in \Omega\times (T_0, T_1)$,
$S_\e (f)(x, t)=S_\e (f\eta_\e)(x, t)$, 
where $\eta_\e =\eta_\e (x, t)$ is the 
characteristic function of $\Omega_\e \times (T_0-\e^2, T_1+\e^2)$.
By applying (\ref{1.22}) to the function $f\eta_\e$, 
one may deduce that
\begin{equation}\label{1.22-0}
\int_{T_0}^{T_1}\!\! \int_{\Omega} |e^{\psi} g^\e  S_\e (f) |^p \, dxdt
\le C e^{\e p \|\nabla \psi\|_\infty} 
\|g\|_{L^p(Y)}^p  \int_{T_0-\e^2}^{T_1+\e^2}\!\!\! \int_{\Omega_\e} |e^\psi f|^p\, dxdt.
\end{equation}
Using $\int_{\R^{d+1}} |\nabla \theta_\e |\, dxdt\le C\e^{-1}$,
the same argument as in the proof of Lemma \ref{lemma-S-3} also shows that
\begin{equation}\label{S-remark-1}
\int_{T_0}^{T_1}\!\!
 \int_{\Omega} |e^{\psi} g^\e  \nabla S_\e (f) |^p \, dxdt
\le C \e^{-p}e^{\e p \|\nabla \psi\|_\infty} 
\|g\|_{L^p(Y)}^p  \int_{T_0-\e^2}^{T_1+\e^2} \!\!\!\int_{\Omega_\e} |e^\psi f|^p\, dxdt
 \end{equation}
for $1\le p<\infty$, where $C$ depends only on $d$ and $p$.
}
\end{remark}

\begin{lemma}\label{lemma-S-2}
Let $S_\e$ be defined as in (\ref{1.18}). 
Let $1\le p<\infty$ and $\psi$ be a bounded Lipschitz function in $\R^d$. Then for 
$\Omega\subset \R^d$ and $(T_0, T_1)\subset \R$,
\begin{equation}\label{S-approx}
\aligned 
& \int_{T_0}^{T_1}\!\!\int_{\Omega}
| e^\psi \big(  S_\e (\nabla f) -\nabla f\big)|^p\, dxdt\\
& \quad
\le C \e^p  e^{\e p\| \nabla \psi\|_\infty}
 \int_{T_0-\e^2}^{T_1+\e^2}\!\!\!\int_{\Omega_\e}
|e^\psi \big(  |\nabla^2 f |
+ |\partial_t f | \big)|^p \, dx dt,
\endaligned
\end{equation}
where $\Omega_\e$ is given by (\ref{O-e}) and 
$C$ depends only on $d$ and $p$.
\end{lemma}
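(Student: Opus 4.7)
The plan is to decompose the error into a purely spatial piece and a mixed spatio-temporal piece, each of which can be estimated separately. Writing
\begin{equation*}
S_\e(\nabla f)(x,t)-\nabla f(x,t)=\int_{\R^{d+1}}\theta_\e(y,s)\bigl[(\nabla f)(x-y,t-s)-(\nabla f)(x,t)\bigr]\,dyds,
\end{equation*}
I would insert and subtract $(\nabla f)(x-y,t)$ to split this into $I_1+I_2$, where $I_2$ involves only the spatial difference $(\nabla f)(x-y,t)-(\nabla f)(x,t)$ and $I_1$ involves the temporal difference $(\nabla f)(x-y,t-s)-(\nabla f)(x-y,t)$.

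For $I_2$, the fundamental theorem of calculus gives
$(\nabla f)(x-y,t)-(\nabla f)(x,t)=-\int_0^1 y\cdot\nabla^2 f(x-\tau y,t)\,d\tau$, so that on the support of $\theta_\e$ one has the factor $|y|\le \e$, which is exactly where the gain of $\e$ comes from.

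The main obstacle is $I_1$: a naive application of FTC in time would produce $\nabla\partial_t f$ rather than $\partial_t f$, and then the $\tau^{-1}$ from the chain rule spoils integrability in $\tau\in[0,1]$. I would get around this by first integrating by parts in $y$, using the chain-rule identity $(\nabla f)(x-y,t-s)=-\nabla_y[f(x-y,t-s)]$, to land at
\begin{equation*}
I_1=\int_{\R^{d+1}}\nabla\theta_\e(y,s)\bigl[f(x-y,t-s)-f(x-y,t)\bigr]\,dyds,
\end{equation*}
which costs a factor of $\e^{-1}$ via $|\nabla\theta_\e|$. I would then apply FTC in time to write $f(x-y,t-s)-f(x-y,t)=-\int_0^1 s\,\partial_t f(x-y,t-\tau s)\,d\tau$, which gains a factor $|s|\le\e^2$. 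The net weight $|s\,\nabla\theta_\e(y,s)|$ satisfies $\int|s\,\nabla\theta_\e|\,dyds\le C\e$ by the rescaling $(y,s)\mapsto(\e y',\e^2 s')$, matching the bound $\int |y\,\theta_\e|\,dyds\le C\e$ used for $I_2$.

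With these two representations in hand, the estimate is obtained by Jensen's (or Hölder's) inequality applied to the probability-like measures $|y\,\theta_\e|/Z_2$ and $|s\,\nabla\theta_\e|/Z_1$ (with $Z_1,Z_2\le C\e$), which produces
\begin{equation*}
|e^{\psi(x)}I_j(x,t)|^p\le C\e^{p-1}\!\!\int_0^1\!\!\int \kappa_j(y,s)\,|e^{\psi(x)}G_j(x-a_\tau y,t-b_\tau s)|^p\,dyds\,d\tau,
\end{equation*}
where $G_1=\partial_t f$, $G_2=\nabla^2 f$ and $\kappa_j$ is the corresponding weight. Exactly as in the proof of Lemma \ref{lemma-S-3}, writing $e^{\psi(x)}\le e^{|\psi(x)-\psi(x-a_\tau y)|}e^{\psi(x-a_\tau y)}\le e^{\e\|\nabla\psi\|_\infty}e^{\psi(x-a_\tau y)}$ (since $|a_\tau y|\le\e$), integrating in $(x,t)\in\Omega\times(T_0,T_1)$, applying Fubini, and translating by $(a_\tau y,b_\tau s)$ (whose displacements lie in $\{|y'|\le\e\}\times\{|s'|\le\e^2\}$) confines the image into $\Omega_\e\times(T_0-\e^2,T_1+\e^2)$. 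The total mass estimates $Z_j\le C\e$ then upgrade $\e^{p-1}$ to the desired $\e^p$, completing the proof.
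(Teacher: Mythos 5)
Your decomposition into a spatial piece ($I_2$, estimated via FTC in $x$ giving $\nabla^2 f$) and a temporal piece ($I_1$, estimated by first integrating by parts in $y$ to move the gradient onto $\theta_\e$ and then applying FTC in $t$ giving $\partial_t f$) is exactly the paper's splitting into $J_2$ and $J_1$, and the subsequent Jensen/H\"older, Fubini, and weight-transfer steps match the paper's as well. The argument is correct and essentially identical to the paper's proof, differing only in minor bookkeeping (you normalize by the masses $Z_j\le C\e$ of weighted measures, while the paper bounds $|y|^p\le\e^p$ and $|s|^p\le\e^{2p}$ on the support directly).
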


\begin{proof} Write
$$
S_\e (\nabla f) (x, t) -\nabla f(x, t)=J_1 (x, t) +J_2 (x, t),
$$
where
$$
\aligned
J_1(x, t)&=\int_{\R^{d+1}} \theta_\e (y, s) \big( \nabla f (x-y, t-s) -\nabla f (x-y, t)\big)\, dyds,\\
J_2(x, t) &=
\int_{\R^{d+1}} \theta_\e (y, s) \big( \nabla f( x-y, t) -\nabla f (x, t)\big)\, dyds.
\endaligned
$$
To estimate $J_2$, we observe that by H\"older's inequality and the fact $\int_{\R^{d+1}} \theta_\e \, dyds=1$,
$$
|J_2(x, t)|^p 
\le \int_{\R^{d+1}} \theta_\e (y, s) | \nabla f (x-y, t) -\nabla f (x, t)|^p\, dyds,
$$
and
$$
\aligned
|\nabla f (x-y, t) -\nabla f (x, t)|
&=\Big|\int_0^1 \frac{\partial }{\partial \tau} 
\nabla f( x-\tau y, t) \, d\tau \Big|\\
& \le | y|\int_0^1 |\nabla^2 f(x-\tau y, t)|\, d\tau \\
&\le |y| \left(\int_0^1 |\nabla^2 f(x-\tau y, t)|^p\, d\tau\right)^{1/p}.
\endaligned
$$
It follows by Fubini's Theorem that
$$
\aligned
&\int_{T_0}^{T_1}\!\! \int_{\Omega} | e^{\psi (x)} J_2 (x, t)|^p\, dx dt\\
&\le \int_{T_0}^{T_1}\!\!\int_{\Omega}\int_{\R^{d+1}}\int_0^1
e^{p \psi (x)} \theta_\e (y, s) |y|^p |\nabla^2 f (x-\tau y, t)|^p\, d\tau dyds dxdt\\
&\le \e^p e^{ \e p \|\nabla \psi\|_\infty}
\int_{T_0}^{T_1}\!\! \int_{\Omega}\int_{\R^{d+1}}\int_0^1
e^{p \psi (x-\tau y)} \theta_\e (y, s)  |\nabla^2 f (x-\tau y, t)|^p\, d\tau dyds dxdt\\
&\le  \e^p e^{\e p\|\nabla \psi\|_\infty}
\int_{T_0}^{T_1} \!\!\int_{\Omega_\e} | e^\psi \nabla^2 f|^p\, dx dt,
\endaligned
$$
where we have used the facts that
$|\psi (x)-\psi (x-\tau y)|\le |\tau| |y|\|\nabla \psi\|_\infty$ and
$\theta_\e (y, s)=0$ if $|y|>\e$.

Finally, to estimate $J_1$, we first use integration by parts to obtain
$$
|J_1(x, t)|\le \int_{\R^{d+1}}
|\nabla \theta_\e (y, s)| | f(x-y, t-s)- f(x-y, t)|\, dyds.
$$
By H\"older's inequality,
$$
|J_1(x, t)|^p \le  C \e^{1-p}\int_{\R^{d+1}}
|\nabla \theta_\e (y, s)| | f(x-y, t-s)- f(x-y, t)|^p\, dyds,
$$
where we have also used the fact $\int_{\R^{d+1}} |\nabla \theta_\e| \, dyds \le C \e^{-1}$.
Using
$$
\aligned
|f (x-y, t-s)- f(x-y, t)|
&\le \Big| \int_0^1 \frac{\partial}{\partial \tau} f(x-y, t-\tau s)\, d\tau \Big|\\
&\le |s| \left(\int_0^1 |\partial_t f (x-y, t-\tau s)|^p \, d\tau \right)^{1/p},
\endaligned
$$
we see that by Fubini's Theorem, 
$$
\aligned
& \int_{T_0}^{T_1}\!\!\int_{\Omega} | e^{\psi (x)} J_1 (x, t)|^p\, dx dt\\
&\le  C\e^{1-p} \int_{T_0}^{T_1}\!\int_{\Omega}\int_{\R^{d+1}}\int_0^1
e^{p \psi (x)}|\nabla  \theta_\e (y, s)| |s|^p |\partial_t f (x- y, t-\tau s)|^p\, d\tau dyds dxdt\\
&\le C \e^{1+p} e^{ \e p \|\nabla \psi\|_\infty}
\int_{T_0}^{T_1}\!\int_{\Omega}\int_{\R^{d+1}}\int_0^1
e^{p \psi (x- y)} |\nabla \theta_\e (y, s)|  |\partial_t f (x- y, t-\tau s)|^p\, d\tau dyds dxdt\\
&\le  C \e^p e^{\e p\|\nabla \psi\|_\infty}
\int_{T_0-\e^2}^{T_1+\e^2} \!\!\! \int_{\Omega_\e} |e^\psi \partial_t f|^p\, dxdt,
\endaligned
$$
where we have used the facts that 
$|\psi (x)-\psi (x-y)|\le \|\nabla \psi\|_\infty |y|$ and
$\theta_\e (y,s)=0$ if $|y|>\e$ or $|s|>\e^2$.
This, together with the estimate for $J_2$, completes the proof.
\end{proof}

\begin{thm}\label{e-10}
Let $F_\e =(F_{\e, i}^\alpha)$ be given by (\ref{e-4}) and $1\le p<\infty$.
Then for any $\Omega \subset \R^d$ and $(T_0, T_1)\subset \R$,
\begin{equation}\label{e-11}
\int_{T_0}^{T_1} \!\!\int_\Omega
|e^\psi F_\e  |^p\, dx dt
\le C e^{\e p \|\nabla \psi\|_\infty}
\int_{T_0-\e^2}^{T_1+\e^2} \!\!\int_{\Omega_\e}
\Big\{
|e^\psi \nabla^2 u_0|^p +
| e^\psi \partial_t u_0|^p \Big\} dx dt,
\end{equation}
 where 
$\Omega_\e$ is given by (\ref{O-e}) and
$C$ depends only on $d$, $m$, $p$ and $\mu$.
\end{thm}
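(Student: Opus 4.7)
The plan is to decompose $F_{\e,i}^\alpha$ into the six summands in (\ref{e-4}), bound each separately in the weighted $L^p$-norm, and then sum via the elementary inequality $(a_1+\cdots+a_6)^p\le C_p\sum_k a_k^p$. The workhorse is that convolution commutes with differentiation, so a derivative in front of $S_\e(f)$ may be moved onto $f$; conversely, estimate (\ref{S-remark-1}) trades a derivative of $S_\e$ for a factor $\e^{-1}$, which is exactly what is needed to cancel the prefactor $\e$ (or match $\e^{-1}$) sitting in front of several summands.

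For the first summand, $|a^\e_{ij}-\widehat{a}_{ij}|$ is pointwise bounded by $\mu^{-1}+\mu_1$, so after pulling this out one applies Lemma \ref{lemma-S-2} to $f=u_0$; the $\e$-gain in (\ref{S-approx}) cancels the $\e^{-1}$ prefactor and yields precisely the right-hand side of (\ref{e-11}). The second, third and fifth summands all have the form $g^\e\cdot (\partial/\partial x_\ell)\, S_\e(\nabla u_0)$; commuting the outer derivative through $S_\e$ rewrites them as $g^\e\cdot S_\e(\nabla^2 u_0)$, and Lemma \ref{lemma-S-3} then applies with $g$ a product of periodic functions drawn from $\{a,\chi,\phi,\nabla_y\phi\}$. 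The fourth summand $\e\,\phi^\e\,\partial_t S_\e(\nabla u_0)$ I would rewrite, after swapping $\partial_t$ and $\partial_{x_j}$, as $\e\,\phi^\e\,(\partial/\partial x_j) S_\e(\partial_t u_0)$, and then invoke (\ref{S-remark-1}) with $f=\partial_t u_0$; the factors $\e$ and $\e^{-1}$ cancel. The sixth summand is handled identically: write $(\partial^2/\partial x_j\partial x_\ell)\, S_\e(\nabla u_0)=(\partial/\partial x_j)\,S_\e(\nabla^2 u_0)$ and apply (\ref{S-remark-1}) with $g=a\phi$ and $f=\nabla^2 u_0$. Summing the six bounds produces (\ref{e-11}).

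The $L^p(Y)$-norms of the periodic coefficients $g$ used above are bounded in terms of $d$, $m$, $\mu$, $p$ alone: the $L^\infty(Y)$-bounds on $\chi$ and $\phi$ come from Lemmas \ref{p-1} and \ref{p-10}. The single delicate point I expect is the $L^p(Y)$-integrability of $\nabla_y\phi$ needed in the fifth summand, since Lemma \ref{1.15} only asserts $\phi\in H^1(Y)$; I would extract this from the identity (\ref{1.14}) by applying $L^p$ elliptic regularity on the torus to the Poisson equation (\ref{1.8-2}), together with an $L^p(Y)$-bound on $b=A+A\nabla\chi-\widehat{A}$ obtained from Meyers' higher integrability of $\nabla\chi$ in the scalar case, or from the uniform $W^{1,p}$-estimate for correctors under the $\text{VMO}_x$ assumption in the system case. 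No separate bookkeeping of the exponential weight is required, because each lemma invoked already carries the factor $e^{\e p\|\nabla\psi\|_\infty}$ that appears on the right-hand side of (\ref{e-11}).
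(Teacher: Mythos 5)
Your proof follows the paper's argument essentially verbatim: the same six-term decomposition of $F_\e$ as in (\ref{e-12}), the same commutation identities $\nabla S_\e(\nabla u_0)=S_\e(\nabla^2 u_0)$ and $\partial_t S_\e(\nabla u_0)=\nabla S_\e(\partial_t u_0)$, and the same three tools (Lemma \ref{lemma-S-2} for the first summand, estimate (\ref{1.22-0}) from Remark \ref{remark-S} for the second, third, and fifth, and (\ref{S-remark-1}) for the fourth and sixth). One small notational point: for the localized statement you actually want (\ref{1.22-0}) rather than the global Lemma \ref{lemma-S-3}, but they are interchangeable here.

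The delicate point you flag about the fifth summand is legitimate and, notably, is passed over in silence by the paper, whose proof applies (\ref{1.22-0}) with $g=a\,\nabla_y\phi$ without any verification that $\nabla_y\phi\in L^p(Y)$; Lemma \ref{1.15} gives only $\phi\in H^1(Y)$ and Lemma \ref{p-10} only $\phi\in L^\infty(Y)$. Your Calder\'on--Zygmund patch on (\ref{1.8-2}) is the right one and reduces the matter to $\nabla\chi\in L^p(Y)$. Under $A\in\mathrm{VMO}_x$ the interior $W^{1,p}$ estimates of \cite{Byun-2007,Krylov-2007} give $\nabla\chi\in L^p(Y)$ for every finite $p$, so the argument closes. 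But for $m=1$ with merely bounded measurable $A$, Meyers reaches only $L^{2+\delta}(Y)$ for a small $\delta>0$ depending on $d,\mu$, which does not cover the full range $1\le p<\infty$ with $C$ depending only on $d,m,p,\mu$ as the theorem asserts. This is a lacuna shared with the paper rather than a defect of your proposal relative to it, but you should be aware that the route you describe (like the paper's) does not justify the statement as literally written in the scalar, non-$\mathrm{VMO}_x$ case for large $p$, which is precisely the regime in which (\ref{e-11}) is later used (with $p>d+2$) in Theorem \ref{w-1-9}.
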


\begin{proof}
Observe that 
\begin{equation}\label{e-12}
\aligned
 \int_{T_0}^{T_1} \!\!\int_\Omega
|e^\psi F_\e  |^p\, dx dt 
& \le  C \e^{-p}  \int_{T_0}^{T_1}\! \int_{\Omega}
|\nabla u_0 -S_\e (\nabla u_0)|^p e^{p \psi} \, dxdt\\
& \quad+ C  \int_{T_0}^{T_1} \!\int_{\Omega} |\chi^\e |^p | S_\e (\nabla^2 u_0)|^p e^{p\psi}\, dxdt\\
&\quad+C   \int_{T_0}^{T_1}\! \int_{\Omega} |\phi^\e|^p | S_\e (\nabla^2 u_0)|^p e^{p\psi}\, dxdt\\
&\quad + C \e^{p} \int_{T_0}^{T_1} \!\int_{\Omega}
|\phi^\e |^p |\nabla S_\e (\partial_t u_0)|^p e^{p\psi} dxdt\\
&\quad+C  \int_{T_0}^{T_1} \!\int_{\Omega} |(\nabla \phi)^\e|^p
| S_\e (\nabla^2 u_0)|^p e^{p\psi} dx dt\\
&\quad 
 + C \e^{p} \int_{T_0}^{T_1} \!\int_{\Omega} |\phi^\e |^p |\nabla S_\e (\nabla^2 u_0)|^p e^{p\psi} dxdt,
\endaligned
\end{equation}
where $C$ depends only on $d$ and $\mu$.
In (\ref{e-12}) we have also used the observation that
$\partial_t S_\e (\nabla u_0)=\nabla S_\e (\partial_t u_0)$
and $\nabla S_\e (\nabla u_0)=S_\e (\nabla^2 u_0)$.

We now proceed to bound each term in the RHS of (\ref{e-12}), using  Lemma \ref{lemma-S-2} and
Remark \ref{remark-S}.
By Lemma \ref{lemma-S-2}, the first term in the RHS of (\ref{e-12}) is bounded by
\begin{equation}\label{e-13}
C  e^{ p\e \|\nabla \psi\|_\infty}
\int_{T_0-\e^2}^{T_1+\e^2}\!\!\! \int_{\Omega_\e}
|e^\psi (|\nabla^2 u_0| +|\partial_t u_0|)|^p\, dx dt.
\end{equation}
Using  (\ref{1.22-0})  we may bound the second, third, fifth terms in the RHS of (\ref{e-12})  by
\begin{equation}\label{e-14}
C  e^{p\e \|\nabla \psi\|_\infty}
\int_{T_0-e^2}^{T_1+\e^2}\!\!\! \int_{\Omega_\e}
|e^\psi \nabla^2 u_0|^p\, dx dt.
\end{equation}
Finally, by (\ref{S-remark-1}), the fourth and sixth terms in the RHS of (\ref{e-12}) are bounded by
(\ref{e-13}).
This completes the proof.
\end{proof}

%%%%%%%%%%%%%%%%%%%%%%%%%%%%%%%%%%%%%%%%%%%%%%

%%%%%%%%%%%%%%%%%%%%%%%%%%%%%%%%%%%%%%%%%%%%%%%

\section{Weighted estimates for $\partial_t +\mathcal{L}_0$}\label{section-4}

Recall that  $\Gamma_0 (x, t; y, s)$ denotes the matrix of fundamental solutions 
for the homogenized operator $\partial_t +\mathcal{L}_0$ in $\R^{d+1}$.
Since $\mathcal{L}_0$ is a second-order elliptic operator with constant coefficients,
it is well known that $\Gamma_0 (x, t; y, s)=\Gamma_0 (x-y, t-s; 0, 0)$ and 
\begin{equation}\label{f-const}
|\nabla_x^M\partial_t^N \Gamma_0 (x, t; y, s)|
\le 
 \frac{C}{ (t-s)^{\frac{d+M+2N}{2}}} \exp\left\{ -\frac{\kappa |x-y|^2}{t-s}\right\}
\end{equation}
for any $M, N\ge 0$,
where $\kappa>0$ depends only on $\mu$, and
$C$ depends on $d$, $m$, $M$, $N$ and $\mu$.

Let $\psi: \R^d\to \R$ be a bounded Lipschitz function and
\begin{equation}\label{w-0-1}
u_0(x, t)=\int_{-\infty}^t \int_{\R^d} \Gamma_0 (x, t; y, s) f(y, s) e^{-\psi (y)}\, dyds, 
\end{equation}
where $f\in C_0^\infty (\R^{d+1}; \R^m)$.
Then
\begin{equation}\label{w-0-0}
\left(\partial_t +\mathcal{L}_0\right) u_0 =e^{-\psi} f \quad \text{ in } \R^{d+1}.
\end{equation}
The goal of this section is to prove the following.

\begin{thm}\label{w-0-2}
Let $u_0$ be defined by (\ref{w-0-1}). 
Suppose that $f(x, t)=0$ for $t\le s_0$.
Then
\begin{equation}\label{w-0-3}
\int_{s_0}^{t}\! \int_{\R^d}
\big|e^{\psi} \big( |\nabla^2 u_0| +|\partial_t u_0|\big)\big|^2\, dx dt
\le C  e^{\kappa (t-s_0) \|\nabla \psi\|_\infty^2}
 \int_{s_0}^{t}\! \int_{\R^d} | f|^2\, dxdt
\end{equation}
for any $s_0<t<\infty$,
where $\kappa>0$ depends only on $\mu$ and
$C$  depend only on $d$ and $\mu$.
\end{thm}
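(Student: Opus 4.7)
\medskip
\noindent\textbf{Proof proposal.}
The plan is to prove the three weighted bounds
$\sup_s \|\phi u_0(\cdot,s)\|_{L^2} + \|\phi\nabla u_0\|_{L^2}$,
$\|\phi\partial_t u_0\|_{L^2}$, and finally $\|\phi\nabla^2 u_0\|_{L^2}$,
where $\phi=e^\psi$ and $M=\|\nabla\psi\|_\infty$, by a sequence of weighted
energy estimates applied directly to the equation
$(\partial_t+\mathcal{L}_0)u_0=e^{-\psi}f$ with zero initial data. The weight
$\phi^2$ only produces first-order commutators (factors of $\nabla\phi=\phi\nabla\psi$),
so every integration-by-parts leaves a harmless term of size $M$ that is
absorbed by Cauchy--Schwarz; the cumulative effect across time is
packaged into the exponential factor $e^{\kappa(t-s_0)M^2}$ by Gronwall's
lemma. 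The substitution $v=e^\psi u_0$ is avoided because $\psi$ is only
Lipschitz, so $\nabla^2 v$ need not be a function; working with $u_0$
directly with weighted test functions sidesteps this issue.

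\medskip
\noindent\emph{Steps 1--2 (control of $u_0$, $\nabla u_0$, $\partial_t u_0$).}
First I would test the equation against $\phi^2 u_0$ (in $\mathbb R^m$ inner
product) and integrate over $\R^d$. The ellipticity of $\widehat A$ yields the coercive
term $\mu\int\phi^2|\nabla u_0|^2$, while the commutator
$2\int\phi\nabla\phi\cdot\widehat A\nabla u_0\cdot u_0$ is controlled by
$\tfrac{\mu}{2}\int\phi^2|\nabla u_0|^2+CM^2\int\phi^2|u_0|^2$ via Young.
The source term $\int\phi u_0\cdot f$ is handled by Young with a small parameter.
Gronwall (using $u_0(\cdot,s_0)=0$) produces
$\sup_{s}\int\phi^2|u_0|^2 + \int_{s_0}^t\int\phi^2|\nabla u_0|^2
\le Ce^{\kappa M^2(t-s_0)}\int_{s_0}^t\int|f|^2$.
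Then I test against $\phi^2\partial_t u_0$; after integration by parts
$\int\phi^2 \partial_t u_0\cdot\mathcal{L}_0 u_0
 = \int\phi^2 \widehat A\partial_t\nabla u_0\cdot\nabla u_0
 + 2\int\phi\nabla\phi\cdot\widehat A\nabla u_0\,\partial_t u_0$.
Splitting $\widehat A=\widehat A^s+\widehat A^a$, the symmetric piece becomes a clean
$\tfrac12\partial_t\int\phi^2\widehat A^s\nabla u_0\cdot\nabla u_0$, while
the antisymmetric piece, after a further integration by parts in $x$,
loses its top-order contribution because $\widehat a^{a,\alpha\beta}_{ij}\partial_i\partial_j u_0^\alpha$
vanishes by combined antisymmetry in $(i,\alpha)\leftrightarrow(j,\beta)$
and symmetry of mixed partials, leaving only terms $\lesssim M\int\phi|\nabla u_0||\partial_t u_0|$.
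Absorbing $\frac12\int\phi^2|\partial_t u_0|^2$ on the left and integrating in $t$
gives $\int_{s_0}^t\int\phi^2|\partial_t u_0|^2 \le Ce^{\kappa M^2(t-s_0)}\int_{s_0}^t\int|f|^2$.

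\medskip
\noindent\emph{Step 3 (weighted $L^2$ bound on $\nabla^2 u_0$).}
From the equation, $-\text{div}(\widehat A\nabla u_0)=e^{-\psi}f-\partial_t u_0$
for each $t$. Since $\widehat A$ is a constant-coefficient elliptic matrix
satisfying the Legendre condition, the standard unweighted estimate
$\|\nabla^2 w\|_{L^2(\R^d)}\le C\|\mathcal{L}_0 w\|_{L^2(\R^d)}$
holds (by Fourier multiplier). To pass to a weighted version,
I would localize on the scale at which $\phi$ is essentially constant:
cover $\R^d$ by balls $\{B_k=B(x_k,r)\}$ with $r=1/(M+1)$ and bounded
overlap, on each of which $\phi\sim \phi(x_k)$. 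Apply the Caccioppoli-type
$L^2$ estimate for the constant-coefficient elliptic system
$\|\nabla^2 u_0\|_{L^2(B_k)}^2 \le C\|\mathcal{L}_0 u_0\|_{L^2(2B_k)}^2
+ Cr^{-2}\|\nabla u_0\|_{L^2(2B_k)}^2 + Cr^{-4}\|u_0\|_{L^2(2B_k)}^2$,
multiply by $\phi(x_k)^2$, and sum. Using bounded overlap and
$\phi^2 e^{-2\psi}=1$, this yields
$\int\phi^2|\nabla^2 u_0|^2 \le C\int\phi^2|\partial_t u_0|^2 + C\int|f|^2
+ C(M+1)^2\int\phi^2|\nabla u_0|^2 + C(M+1)^4\int\phi^2|u_0|^2$.
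Integrating in $t$ and plugging in Steps 1--2 closes the argument; any
polynomial factor in $M^2(t-s_0)$ picked up along the way is absorbed
into $e^{\kappa M^2(t-s_0)}$ after a slight enlargement of $\kappa$.

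\medskip
\noindent\emph{Main obstacle.}
The delicate step is Step 2 for systems with $m\ge 2$: the homogenized
matrix $\widehat A$ need not be symmetric, so the identity
$\int\phi^2\widehat A\partial_t\nabla u_0\cdot\nabla u_0
=\tfrac12\partial_t\int\phi^2\widehat A\nabla u_0\cdot\nabla u_0$
fails and one must show that the antisymmetric part contributes only
lower-order terms; careful index manipulation using
$\widehat a^{a,\alpha\beta}_{ij}=-\widehat a^{a,\beta\alpha}_{ji}$
and the symmetry of $\partial_i\partial_j$ is required.
A secondary technical point is ensuring that every constant in Gronwall
depends only on $d,\mu$ so that the standalone factor $(t-s_0)$ never
appears outside the exponential---achieved by calibrating the Young
parameter so that the absorbed $\|f\|_{L^2}$ constant is independent
of $M$ and $t-s_0$.
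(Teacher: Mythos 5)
Your Step 2 contains a genuine gap, and it is exactly at the point you flag as the main obstacle. The antisymmetric part of $\widehat A$ is antisymmetric only under the \emph{simultaneous} swap $(i,\alpha)\leftrightarrow(j,\beta)$, i.e. $\widehat a^{a,\alpha\beta}_{ij}=-\widehat a^{a,\beta\alpha}_{ji}$; for fixed $(\alpha,\beta)$ the coefficients $\widehat a^{a,\alpha\beta}_{ij}$ need not be antisymmetric in $(i,j)$, so the term $\widehat a^{a,\alpha\beta}_{ij}\partial_i\partial_j u_0^\beta$ does \emph{not} vanish when $m\ge 2$ (take $m=d=2$ with $\widehat a^{12}_{11}=1$, $\widehat a^{21}_{11}=-1$: the antisymmetric part contributes $\partial_1^2u_0^2$). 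The surviving top-order term $\int\phi^2\,\widehat a^{a,\alpha\beta}_{ij}\,\partial_i\partial_j u_0^\beta\,\partial_t u_0^\alpha$ can only be controlled by $\|e^\psi\nabla^2u_0\|_{L^2}$, which in your scheme is produced in Step 3 \emph{from} the Step 2 output; trying to close the two steps simultaneously fails because both absorption constants are $O(1)$ and cannot be made small. (Your cancellation is correct in the scalar case, but the theorem is stated for systems.) The paper sidesteps the symmetry issue entirely: it differentiates the equation in $x_k$ and tests with $e^{2\psi}\partial_k u_0$, so the time term is $\tfrac12\tfrac{d}{dt}\int e^{2\psi}|\nabla u_0|^2$ (no $\widehat A$ appears there), the Legendre condition (\ref{A-ellipticity}) applied with $\xi_i^\alpha=\partial_i\partial_k u_0^\alpha$ gives coercivity directly on the full Hessian, and $\partial_t u_0$ is then read off algebraically from the equation $\partial_t u_0=e^{-\psi}f-\mathcal{L}_0u_0$.

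There is also a quantitative problem in Steps 1 and 3. The Step 1 conclusion as stated is false without a factor $(t-s_0)$: already for $\psi=0$ and low-frequency, time-independent $f$, one has $\sup_t\|u_0\|_{L^2}^2\sim (t-s_0)\int\!\!\int|f|^2$ and $\int\!\!\int|\nabla u_0|^2\sim (t-s_0)\int\!\!\int|f|^2$, so no bound uniform in $t-s_0$ holds. Such factors are harmless only when they come multiplied by powers of $M^2=\|\nabla\psi\|_\infty^2$, since $(t-s_0)M^2e^{\kappa_1(t-s_0)M^2}\le Ce^{\kappa(t-s_0)M^2}$; this is precisely how the paper uses its Lemma \ref{w-00}, whose right-hand side carries $(t-s_0)$ but enters the final energy inequality only multiplied by $\|\nabla\psi\|_\infty^2$. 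Your Step 3 localization at scale $1/(M+1)$ breaks this structure: it produces lower-order terms with factors $(M+1)^2$ and $(M+1)^4$, so for small $M$ you are left with terms like $C\int\!\!\int e^{2\psi}|u_0|^2\sim(t-s_0)^2\int\!\!\int|f|^2$ that cannot be absorbed into $Ce^{\kappa(t-s_0)M^2}\int\!\!\int|f|^2$. A repair would require localizing at scale $1/M$ (using the global unweighted second-order estimate when $M=0$) together with the corrected, $(t-s_0)$-explicit Step 1 bounds; but the paper's route --- kernel bounds for the lower-order term, then the differentiated-equation energy estimate --- reaches (\ref{w-0-3}) without Steps 2--3 at all and with no symmetry assumption on $\widehat A$.
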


We start with an estimate on a lower order term.

\begin{lemma}\label{w-00}
Let $u_0$ be defined by (\ref{w-0-1}). 
Suppose that $f(x, t)=0$ for $t<s_0$.
Then
\begin{equation}\label{w-00-1}
\int_{s_0}^{t}\!\int_{\R^d}
 | e^{\psi} \nabla u_0|^2 \, dxdt
 \le C (t-s_0) e^{\kappa_1 (t-s_0) \|\nabla \psi\|_\infty^2}
 \int_{s_0}^{t}\!\int_{\R^d} | f|^2\, dxdt
 \end{equation}
 for any $s_0<t<\infty$,
where $\kappa_1>0$ depends only on $\mu$ and
$C$  depend only on $d$ and  $\mu$.
\end{lemma}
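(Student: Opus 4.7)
The plan is to exploit the explicit Gaussian bound for $\nabla_x \Gamma_0$. Since $\widehat A$ has constant coefficients, (\ref{f-const}) with $M=1$, $N=0$ gives $|\nabla_x \Gamma_0(x,t;y,s)| \le C(t-s)^{-(d+1)/2} \exp\{-\kappa|x-y|^2/(t-s)\}$. Differentiating (\ref{w-0-1}) under the integral sign and multiplying by $e^{\psi(x)}$, I would write
\begin{equation*}
e^{\psi(x)} \nabla u_0(x,t) = \int_{s_0}^t \!\!\int_{\R^d} e^{\psi(x)-\psi(y)}\, \nabla_x \Gamma_0(x,t;y,s)\, f(y,s)\, dy\, ds.
\end{equation*}

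The key step is to absorb the exponential weight into the Gaussian decay. Using the Lipschitz bound $|\psi(x)-\psi(y)| \le \|\nabla\psi\|_\infty |x-y|$ together with the Young-type splitting
\begin{equation*}
\|\nabla\psi\|_\infty |x-y| \le \frac{\kappa |x-y|^2}{2(t-s)} + \frac{(t-s)\, \|\nabla\psi\|_\infty^2}{2\kappa},
\end{equation*}
and dominating $(t-s)$ by $(t-s_0)$ in the second term, one obtains the pointwise estimate
\begin{equation*}
|e^{\psi(x)} \nabla u_0(x,t)| \le C\, e^{\kappa_1 (t-s_0)\|\nabla\psi\|_\infty^2} \int_{s_0}^t\!\! \int_{\R^d} \frac{\exp\{-\kappa |x-y|^2/(2(t-s))\}}{(t-s)^{(d+1)/2}}\, |f(y,s)|\, dy\, ds.
\end{equation*}

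To finish, I would apply Minkowski's inequality in $s$ followed by Young's inequality in $y$. Since the spatial $L^1_y$-norm of the residual Gaussian kernel equals $C(t-s)^{-1/2}$, one arrives at
\begin{equation*}
\|e^\psi \nabla u_0(\cdot,t)\|_{L^2(\R^d)} \le C\, e^{\kappa_1(t-s_0)\|\nabla\psi\|_\infty^2} \int_{s_0}^t (t-s)^{-1/2}\, \|f(\cdot,s)\|_{L^2(\R^d)}\, ds.
\end{equation*}
Squaring, integrating the outer variable over $(s_0,t)$, and applying Young's convolution inequality in time with the kernel $\tau^{-1/2}\mathbf{1}_{(0,t-s_0)}$ (whose $L^1$ norm is $2(t-s_0)^{1/2}$), produces the linear prefactor $(t-s_0)$ and hence the claimed bound.

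There is no genuine PDE obstacle here, only careful bookkeeping, since the fundamental solution of $\partial_t+\mathcal L_0$ is explicit and all Gaussian moments are classical. The point that does require care is arranging the Young splitting so the Lipschitz constant of $\psi$ contributes a prefactor of the precise form $e^{\kappa_1(t-s_0)\|\nabla\psi\|_\infty^2}$; a naive energy argument (multiplying the equation by $e^{2\psi}u_0$) would instead produce an extraneous $e^{C(t-s_0)}$ from the coupling term $\int e^\psi u_0 \cdot f$, which does not match the advertised form. The ``halve the Gaussian'' trick in the second step is what eliminates this excess.
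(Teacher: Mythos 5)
Your proposal is correct and follows essentially the same route as the paper: the key step, absorbing $e^{\psi(x)-\psi(y)}$ into the Gaussian bound for $\nabla_x\Gamma_0$ via the splitting $\|\nabla\psi\|_\infty|x-y|\le \frac{\kappa|x-y|^2}{2(t-s)}+\frac{(t-s)\|\nabla\psi\|_\infty^2}{2\kappa}$, is exactly the paper's argument. The only difference is cosmetic bookkeeping at the end (Minkowski plus Young's convolution inequality instead of the paper's Cauchy--Schwarz with respect to the kernel measure followed by Fubini), and both yield the prefactor $C(t-s_0)$.
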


\begin{proof}
It follows from (\ref{f-const}) that for $x, y\in \R^d$ and $t>s$,
$$
\aligned
e^{\psi (x)-\psi (y)} |\nabla _x \Gamma_0 (x, t; y, s)|
 &\le \frac{C}{(t-s)^{\frac{d+1}{2}}}
\exp \left\{ \psi (x)-\psi(y) -\frac{\kappa |x-y|^2}{t-s}\right\}\\
&  \le \frac{C}{(t-s)^{\frac{d+1}{2}}}
\exp \left\{ \|\nabla \psi\|_\infty |x-y|-\frac{\kappa |x-y|^2}{t-s}\right\}.
\endaligned
$$
This, together with the inequality,
\begin{equation}\label{w-00-2}
|\nabla \psi\|_\infty |x-y|
\le 
\frac{ (t-s)\|\nabla \psi\|^2_\infty}{2\kappa}
+\frac{\kappa |x-y|^2}{2 (t-s)},
\end{equation}
yields 
\begin{equation}\label{w-00-3}
e^{\psi (x)-\psi (y)} |\nabla _x \Gamma_0 (x, t; y, s)|
\le C e^{\frac{ (t-s)\|\nabla \psi\|^2_\infty}{2\kappa}}
\cdot \frac{1}{(t-s)^{\frac{d+1}{2}}}
e^{-\frac{\kappa |x-y|^2}{2 (t-s)}}.
\end{equation}
It follows that
$$
\aligned
|e^{\psi (x)} \nabla u_0 (x, t)|
 &\le \int_{s_0}^t\! \int_{\R^d}
 e^{\psi (x)-\psi (y)} |\nabla_x \Gamma_0 (x, t; y, s)| | f(y, s)|\, dyds\\
 &\le C  e^{\frac{ (t-s_0)\|\nabla \psi\|^2_\infty}{2\kappa}}
 \int_{s_0}^t\! \int_{\R^d} 
\frac{1}{(t-s)^{\frac{d+1}{2}}}
e^{-\frac{\kappa |x-y|^2}{2 (t-s)}} | f(y, s)|\, dy ds\\
&\le 
C  e^{\frac{ (t-s_0)\|\nabla \psi\|^2_\infty}{2\kappa}}
(t-s_0)^{\frac{1}{4}}
 \left(\int_{s_0}^t \!\int_{\R^d} 
\frac{1}{(t-s)^{\frac{d+1}{2}}}
e^{-\frac{\kappa |x-y|^2}{2 (t-s)}} | f(y, s)|^2\, dy ds\right)^{1/2},
\endaligned
$$
where we have used H\"older's inequality for the last step.
The estimate (\ref{w-00-1}) now follows by
Fubini's Theorem.
\end{proof}

\begin{proof}[\bf Proof of Theorem \ref{w-0-2}]

In view of (\ref{w-0-0}) we have
$$
(\partial_t +\mathcal{L}_0) \frac{\partial u_0}{\partial x_k} =\frac{\partial}{\partial x_k} (e^{-\psi} f )
$$
in $\mathbb{R}^{d+1}$.
It follows that
$$
\int_{\R^d} \partial_t \nabla u_0 \cdot (\nabla u_0) e^{2\psi}\, dx
-\int_{\R^d} \widehat{a}_{ij}^{\alpha\beta} \frac{\partial^3 u^\beta_0}{\partial x_i \partial x_j \partial x_k} \cdot \frac{\partial u_0^\alpha}
{\partial x_k}  e^{2\psi}\, dx
=\int_{\R^d} \frac{\partial}{\partial x_k} (e^{-\psi} f^\alpha )
\frac{\partial u_0^\alpha}{\partial x_k} e^{2\psi}\, dx.
$$
Using integration  by parts, we obtain 
$$
\aligned
&\frac12 \frac{d}{dt}\int_{\R^d} |\nabla u_0|^2 e^{2\psi}\, dx
+\int_{\R^d} \widehat{a}_{ij}^{\alpha\beta} \frac{\partial^2 u^\beta_0}{ \partial x_j \partial x_k} \cdot \frac{\partial^2 u_0^\alpha}
{\partial x_i\partial x_k}  e^{2\psi}\, dx\\
&=-\int_{\R^d}  f \cdot (\Delta u_0) e^{\psi}\, dx
-\int_{\R^d} e^{-\psi}  f^\alpha \frac{\partial u_0^\alpha}{\partial x_k} \frac{\partial e^{2\psi}}{\partial x_k}\, dx
-\int_{\R^d} \widehat{a}_{ij}^{\alpha\beta} \frac{\partial^2 u^\beta_0}{\partial x_j \partial x_k} \cdot \frac{\partial u_0^\alpha}
{\partial x_k}  \frac{\partial e^{2\psi}}{\partial x_i} \, dx.
\endaligned
$$
By the ellipticity of $\mathcal{L}_0$,
this yields
$$
\aligned
& \frac12 \frac{d}{dt}\int_{\R^d} |\nabla u_0|^2 e^{2\psi}\, dx
+\mu \int_{\R^d} |\nabla^2 u_0|^2 e^{2\psi}\, dx\\
&
\quad \le C \int_{\R^d} |f| |\nabla^2 u_0| e^\psi \, dx
+ C \int_{\R^d} |f|^2\, dx
+ C \|\nabla \psi\|_\infty^2 \int_{\R^d} |\nabla u_0|^2 e^{2\psi}\, dx\\
& \qquad\qquad\qquad
+ C \|\nabla \psi\|_\infty \int_{\R^d} |\nabla^2 u_0| |\nabla u_0| e^{2\psi}\, dx,
\endaligned
$$
where $C$ depends only on $d$ and $\mu$.
Using Cauchy inequality, we may further deduce that 
$$
\aligned
& \frac12 \frac{d}{dt}\int_{\R^d} |\nabla u_0|^2 e^{2\psi}\, dx
+\frac{\mu}{2} \int_{\R^d} |\nabla^2 u_0|^2 e^{2\psi}\, dx\\
&
\quad \le
 C \int_{\R^d} |f|^2\, dx
+ C \|\nabla \psi\|_\infty^2 \int_{\R^d} |\nabla u_0|^2 e^{2\psi}\, dx.\\
\endaligned
$$
We now integrate the inequality above in $t$ over the interval $(s_0, s_1)$.
This leads to 
\begin{equation}\label{w-00-10}
\aligned
& \frac12 \int_{\R^d} |\nabla u_0 (x, s_1)|^2 e^{2\psi}\, dx
+\frac{\mu}{2} \int_{s_0}^{s_1} \!\!\int_{\R^d} |\nabla^2 u_0|^2 e^{2\psi}\, dxdt \\
&
\quad \le
 C \int_{s_0}^{s_1}\!\!\int_{\R^d} |f|^2\, dxdt
+ C \|\nabla \psi\|_\infty^2\int_{s_0}^{s_1} \!\!\int_{\R^d} |\nabla u_0|^2 e^{2\psi}\, dxdt\\
&\quad \le C e^{\kappa (s_1-s_0)\|\nabla \psi\|_\infty^2}
\int_{s_0}^{s_1} \!\!\int_{\R^d} |f|^2\, dx dt,
\endaligned
\end{equation}
where we have used (\ref{w-00-1}) for the last inequality.
Estimate (\ref{w-0-3}) follows readily from (\ref{w-00-10}).
\end{proof}

%%%%%%%%%%%%%%%%%%%%%%%%%%%%%%%%%%

%%%%%%%%%%%%%%%%%%%%%%%%%%%%%%%%%%%%

\section{Proof of Theorem \ref{main-theorem-1}}\label{section-5}

We start with some weighted estimates.

\begin{lemma}\label{w-1-1}
Suppose that
\begin{equation}\label{w-1-2}
\left\{
\aligned
(\partial_t +\mathcal{L}_\e) w_\e  & = \e\, \text{\rm div} (F_\e) & \quad & \text{ in } \R^d \times (s_0, \infty),\\
w_\e  & =0 & \quad & \text{ on } \R^d \times \{t=s_0\}.
\endaligned
\right.
\end{equation}
Let $\psi: \R^d \to \R$ be a bounded Lipschitz function.
Then for any $t>s_0$,
\begin{equation}\label{w-1-3}
\int_{\R^d} |w_\e (x, t)|^2 e^{2\psi (x)}\, dx
\le C\e^2  e^{\kappa (t-s_0)\|\nabla \psi\|^2_\infty}
\int_{s_0}^{t}\!\int_{\R^d} |F_\e (x, s)|^2 e^{2\psi (x)} \, dx ds,
\end{equation}
where $\kappa>0$  and $C>0$ depends only on  $\mu$.
\end{lemma}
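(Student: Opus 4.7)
The plan is a standard weighted energy argument. I would multiply the equation $(\partial_t +\mathcal{L}_\e) w_\e = \e \,\text{\rm div}(F_\e)$ by $w_\e \cdot e^{2\psi}$ and integrate over $\R^d$. The time term yields $\frac{1}{2}\frac{d}{dt}\int_{\R^d} |w_\e|^2 e^{2\psi}\, dx$, while integration by parts in the spatial part produces the coercive piece $\int_{\R^d} A^\e \nabla w_\e \cdot \nabla w_\e \cdot e^{2\psi}\, dx$ together with a cross term $2\int_{\R^d} A^\e \nabla w_\e \cdot \nabla \psi \cdot w_\e\, e^{2\psi}\, dx$. The right-hand side, after moving the divergence onto $w_\e e^{2\psi}$, becomes $-\e\int_{\R^d} F_\e \cdot \bigl[(\nabla w_\e) e^{2\psi} + 2 w_\e\, \nabla \psi\, e^{2\psi}\bigr] dx$.

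The core of the argument is absorbing all three ``bad'' terms by using the ellipticity (\ref{ellipticity}) and Cauchy's inequality with carefully chosen weights. The $A^\e$ cross term I would bound by $\frac{\mu}{4}\int |\nabla w_\e|^2 e^{2\psi} + C\|\nabla\psi\|_\infty^2 \int |w_\e|^2 e^{2\psi}$; the term $\e \int F_\e \cdot \nabla w_\e\, e^{2\psi}$ by $\frac{\mu}{4}\int |\nabla w_\e|^2 e^{2\psi} + C\e^2 \int |F_\e|^2 e^{2\psi}$; and the most delicate term $2\e \int F_\e \cdot w_\e\, \nabla\psi\, e^{2\psi}$ I would split directly as
$$ 2\e\, \|\nabla \psi\|_\infty |F_\e|\, |w_\e| \leq \|\nabla \psi\|_\infty^2 |w_\e|^2 + \e^2 |F_\e|^2, $$
which is the one spot that requires attention, since if one instead bounded it by $\e \|\nabla\psi\|_\infty(|w_\e|^2 + |F_\e|^2)$ the exponential in (\ref{w-1-3}) would pick up a spurious $\|\nabla \psi\|_\infty$ rather than $\|\nabla \psi\|_\infty^2$. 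Combining these bounds, dropping the non-negative $\int |\nabla w_\e|^2 e^{2\psi}$ on the left, and using $\mu \le 1$ yields the differential inequality
$$ \frac{d}{dt}\int_{\R^d} |w_\e|^2 e^{2\psi}\, dx \leq C \|\nabla \psi\|_\infty^2 \int_{\R^d} |w_\e|^2 e^{2\psi}\, dx + C\e^2 \int_{\R^d} |F_\e|^2 e^{2\psi}\, dx, $$
with $C$ depending only on $\mu$.

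Finally, since $w_\e(\cdot, s_0) = 0$, Gronwall's inequality gives
$$ \int_{\R^d} |w_\e(x,t)|^2 e^{2\psi(x)}\, dx \leq C\e^2 \int_{s_0}^{t} e^{C\|\nabla \psi\|_\infty^2 (t-s)} \int_{\R^d} |F_\e(x,s)|^2 e^{2\psi(x)}\, dx\, ds, $$
which implies (\ref{w-1-3}) with $\kappa = C$. The only non-routine point is the choice of weights in Cauchy's inequality above; the formal computation can be justified by a standard truncation/approximation of $\psi$ to a compactly supported Lipschitz function, or by first working with $F_\e \in C_0^\infty$ and then passing to the limit via density.
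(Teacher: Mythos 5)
Your proposal is correct and follows essentially the same route as the paper's proof: a weighted energy identity for $I(t)=\int_{\R^d}|w_\e|^2 e^{2\psi}\,dx$, absorption of the cross terms via ellipticity and Cauchy's inequality with weights chosen so that only $\|\nabla\psi\|_\infty^2$ (not $\|\nabla\psi\|_\infty$) appears, and then Gronwall's inequality with $I(s_0)=0$. Nothing essential is missing.
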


\begin{proof}
Let
\begin{equation}\label{w-1-4}
I(t)=\int_{\R^d} | w_\e (x, t)|^2 e^{2\psi (x)} \, dx.
\end{equation}
Note that
$$
\aligned
I^\prime (t)
&=2\int_{\R^d} \langle \partial_t w_\e, e^{2\psi} w_\e \rangle \, dx\\
&=-2 \int_{\R^d} \langle \mathcal{L}_\e (w_\e), e^{2\psi} w_\e \rangle\, dx
+2\e \int_{\R^d} \langle \text{\rm div} (F_\e), e^{2\psi} w_\e \rangle\, dx\\
&=-2\int_{\R^d}
A^\e  \nabla w_\e \cdot \nabla \left(e^{2\psi} w_\e \right)\, dx
-2 \e \int_{\R^d} F_\e \cdot \nabla \left(e^{2\psi} w_\e \right)\, dx\\
&=-2\int_{\R^d}
A^\e \nabla w_\e \cdot (\nabla  w_\e) e^{2\psi}\, dx
-2\int_{\R^d}
A^\e \nabla w_\e \cdot \nabla (e^{2\psi}) w_\e \, dx\\
& \qquad
-2\e \int_{\R^d} F_\e \cdot (\nabla  w_\e) e^{2\psi}\, dx
-2 \e\int_{\R^d} F_\e \cdot \nabla (e^{2\psi}) w_\e\, dx,
\endaligned
$$
where $\langle\, ,\,  \rangle$ denotes the pairing in $H^{-1} (\R^d; \R^m) \times H^1(\R^d; \R^m)$.
It follows that
$$
\aligned
I^\prime (t)
 & \le -2\mu \int_{\R^d} |\nabla w_\e|^2 e^{2\psi}\, dx
+ \kappa \|\nabla \psi\|_\infty \int_{\R^d} |\nabla w_\e | |w_\e| e^{2\psi}\, dx\\
&\qquad
+2 \e \int_{\R^d} |\nabla w_\e| |F_\e| e^{2\psi}\, dx
+ 4\e  \|\nabla \psi\|_\infty \int_{\R^d} |w_\e | |F_\e| e^{2\psi}\, dx,
\endaligned
$$
where $\kappa>0 $ depends only on $\mu$.
By Cauchy inequality this implies that
\begin{equation}\label{w-1-5}
I^\prime (t)
\le \kappa \|\nabla \psi\|_\infty^2 I (t) +\kappa \e^2   \int_{\R^d} |F_\e(x, t)|^2 e^{2\psi}\, dx,
\end{equation}
where $\kappa>0 $ depends only on $\mu$.
Hence,
$$
\frac{d}{dt}
\left\{ I(t) e^{-\kappa  (t-s_0) \|\nabla \psi\|_\infty^2} \right\}
\le C \e^2 e^{-\kappa  (t-s_0)\|\nabla \psi\|^2_\infty}
\int_{\R^d} |F_\e (x, t)|^2 e^{2\psi}\, dx.
$$
Since $I(s_0)=0$,
it follows that
$$
\aligned
I(t) &\le 
C \e^2\int_{s_0}^t\! \int_{\R^d}
e^{\kappa (t-s)\|\nabla \psi\|_\infty^2}
 |F_\e (x, s)|^2 e^{2\psi} \, dx ds\\
 &\le  C\e^2 e^{\kappa (t-s_0) \|\nabla \psi\|_\infty^2}
 \int_{s_0}^t\! \int_{\R^d} |F_\e (x, s)|^2 e^{2\psi}\, dx ds.
 \endaligned
$$
This completes the proof.
\end{proof}

\begin{lemma}\label{w-1-6}
Suppose that $u_\e \in L^2((-\infty, T); H^1(\R^d))$ and $u_0\in L^2((-\infty, T);  H^2(\R^d))$ for any $T\in \R$,
 that 
$$
\left\{
\aligned
& (\partial_t +\mathcal{L}_\e) u_\e= (\partial_t +\mathcal{L}_0) u_0 & \quad & \text{ in } \R^{d+1},\\
& u_\e (x, t)=u_0 (x, t)=0 &\quad & \text{ for } t\le s_0.
\endaligned
\right.
$$
Let $w_\e$ be defined by (\ref{w}), where the operator $S_\e$ is given by (\ref{1.18}).
Then for any $t>s_0$,
\begin{equation}\label{w-1-7}
\aligned
& \int_{\R^d} |w_\e (x, t)|^2 e^{2\psi (x)}\, dx\\
& \le C \e^2 e^{2 \e \|\nabla \psi\|_\infty + \kappa (t-s_0)\|\nabla \psi\|^2_\infty}
\int_{s_0}^{t+\e^2}\!\!\!\! \int_{\R^d}
\Big\{ |\nabla^2 u_0 (x, s)|^2 +|\partial_s u_0 (x, s)|^2 \Big\} e^{2\psi(x)}  dx ds,
\endaligned
\end{equation}
where $\psi$ is a bounded Lipschitz function in $\R^d$,
$\kappa$ depends only on $\mu$,  and $C$ depends only on $d$, $m$ and $\mu$.
\end{lemma}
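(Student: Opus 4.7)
The plan is to chain together the three preceding results: Proposition \ref{e-2} furnishes the equation satisfied by $w_\e$, the weighted energy estimate in Lemma \ref{w-1-1} converts that equation into a bound on $w_\e$, and Theorem \ref{e-10} controls the resulting right-hand side by the second-order quantities $|\nabla^2 u_0|$ and $|\partial_t u_0|$.

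First I would verify that $w_\e$ has zero initial data at the slightly earlier time $s_0-\e^2$. Since $u_\e\equiv u_0\equiv 0$ on $\R^d\times(-\infty, s_0]$, the difference $u_\e-u_0$ vanishes at $t=s_0-\e^2$. For the corrector and dual-corrector terms in (\ref{w}), note that $\theta_\e(y,s)$ is supported in $\{|s|\le\e^2\}$, so both $S_\e(\nabla u_0)(x, s_0-\e^2)$ and $\nabla S_\e(\nabla u_0)(x, s_0-\e^2)$ only sample $\nabla u_0$ at times in $[s_0-2\e^2, s_0]$, on which $\nabla u_0\equiv 0$. Hence $w_\e(\cdot, s_0-\e^2)\equiv 0$. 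This backward shift is forced on us by the noncausality of the smoothing operator $S_\e$.

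With the initial condition in hand, Proposition \ref{e-2} gives $(\partial_t+\mathcal{L}_\e) w_\e=\e\,\text{\rm div}(F_\e)$ on $\R^d\times(s_0-\e^2,\infty)$ with $F_\e$ defined by (\ref{e-4}). Applying Lemma \ref{w-1-1} with initial time $s_0-\e^2$ yields
\begin{equation*}
\int_{\R^d} |w_\e(x,t)|^2 e^{2\psi}\,dx \le C\e^2 e^{\kappa(t-s_0+\e^2)\|\nabla\psi\|_\infty^2} \int_{s_0-\e^2}^{t}\!\int_{\R^d} |F_\e(x,s)|^2 e^{2\psi}\,dxds,
\end{equation*}
and Theorem \ref{e-10} (with $\Omega=\R^d$, $T_0=s_0-\e^2$, $T_1=t$, $p=2$) bounds the inner integral by
\begin{equation*}
C e^{2\e\|\nabla\psi\|_\infty} \int_{s_0-2\e^2}^{t+\e^2}\!\int_{\R^d} \big\{|\nabla^2 u_0|^2+|\partial_s u_0|^2\big\} e^{2\psi}\,dxds.
\end{equation*}
Since $u_0\equiv 0$ for $s\le s_0$, the lower limit may be raised to $s_0$, matching the integration range in (\ref{w-1-7}).

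The only real obstacle is the initial-condition step: because $S_\e$ is noncausal in time, one cannot take $t=s_0$ as the starting time for $w_\e$ directly, so the $\e^2$ backward shift above is essential to make all four terms of the expansion (\ref{w}) vanish simultaneously. After that the proof is a direct concatenation of the three cited results, with only the mild bookkeeping of absorbing the harmless factor $e^{\kappa\e^2\|\nabla\psi\|_\infty^2}$ into the constant $C$ (when $\e\|\nabla\psi\|_\infty$ is bounded) or into the exponent $\kappa(t-s_0)\|\nabla\psi\|_\infty^2$ by enlarging $\kappa$.
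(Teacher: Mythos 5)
Your argument follows exactly the route the paper intends: concatenate Proposition \ref{e-2}, Lemma \ref{w-1-1}, and Theorem \ref{e-10} with $p=2$. The one genuine point you add --- and it is a real one, which the paper's ``follows readily'' glosses over --- is that $w_\e(\cdot,s_0)$ need not vanish: since $\theta_\e$ is supported in $\{|s|\le\e^2\}$ without a one-sided restriction, $S_\e(\nabla u_0)(\cdot,s_0)$ samples $\nabla u_0$ on $[s_0,s_0+\e^2]$ where it is nonzero. Starting the energy inequality of Lemma \ref{w-1-1} at $s_0-\e^2$, where all four terms of (\ref{w}) do vanish, is the correct fix, and Proposition \ref{e-2} applies on $\R^d\times(s_0-\e^2,\infty)$ since the equation $(\partial_t+\mathcal{L}_\e)u_\e=(\partial_t+\mathcal{L}_0)u_0$ holds in all of $\R^{d+1}$.

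The only loose thread is the resulting exponent $\kappa(t-s_0+\e^2)\|\nabla\psi\|^2_\infty$ versus the stated $\kappa(t-s_0)\|\nabla\psi\|^2_\infty$. The extra factor $e^{\kappa\e^2\|\nabla\psi\|^2_\infty}=e^{\kappa(\e\|\nabla\psi\|_\infty)^2}$ cannot be absorbed into $C$ or into $e^{2\e\|\nabla\psi\|_\infty}$ uniformly in $\e\|\nabla\psi\|_\infty$, since the quadratic outgrows the linear term. Enlarging $\kappa$ absorbs it only when $t-s_0\ge\e^2$. As you observe, that is the only regime in which Lemma \ref{w-1-6} is ever invoked (in the proof of Theorem \ref{main-theorem-1} one has $t-s_0\gtrsim r^2$ with $\e<r$), so this is harmless; but strictly speaking your chain proves (\ref{w-1-7}) with $\kappa(t-s_0)$ replaced by $\kappa(t-s_0+\e^2)$, and the clean way to recover the literal statement would be to fix $\theta$ supported in $\{s\ge 0\}$, making $S_\e$ causal and $w_\e(\cdot,s_0)=0$ outright.
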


\begin{proof}
This follows readily from Lemma \ref{w-1-1} and Theorem \ref{e-10} with $p=2$.
\end{proof}

The next theorem gives a weighted $L^\infty$ estimate.

\begin{thm}\label{w-1-9}
Assume that $A$ is 1-periodic and satisfies (\ref{ellipticity}).
If $m\ge 2$, we also assume that $A\in \text{\rm VMO}_x$.
Suppose that 
$
(\partial_t +\mathcal{L}_\e) u_\e =(\partial_t +\mathcal{L}_0)u_0
$
in $B(x_0,3r)\times (t_0-5r^2, t_0+r^2)$ for some $(x_0, t_0)\in \R^{d+1}$ and
$\e\le r<\infty$.
Then 
\begin{equation}\label{w-1-9.0}
\aligned
& \|e^\psi (u_\e-u_0)\|_{L^\infty(Q_r (x_0, t_0))}\\
& \le C e^{3r\|\nabla \psi\|_\infty}
 \left(\fint_{Q_{2r} (x_0, t_0)} |e^\psi (u_\e -u_0)|^2\right)^{1/2}\\
& \quad+C \e r e^{3r \|\nabla \psi\|_\infty}
\| e^\psi (|\nabla^2 u_0|  +|\partial_t u_0|) \|_{L^\infty( B(x_0, 3r)\times (t_0-5r^2, t_0 +r^2))}\\
& \quad+ C \e  e^{3r\|\nabla \psi\|_\infty} \| e^\psi \nabla u_0\|_{L^\infty( B(x_0, 3r)\times (t_0-5r^2, t_0 +r^2))},
\endaligned
\end{equation}
where $\psi$ is a Lipschitz function in $\R^d$ and
$C$ depends only on $d$, $m$, $\mu$ and $A^\#$ (if $m\ge 2$).
\end{thm}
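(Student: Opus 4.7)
The plan is to apply the uniform interior $L^\infty$ estimate of Theorem~\ref{L-theorem} to the two-scale expansion $w_\e$ defined by (\ref{w}), and then transfer the resulting unweighted bound to a weighted one by exploiting the Lipschitz nature of $\psi$. Because $\e\le r$, the parabolic mollifier $\theta_\e$ entering the definition of $S_\e$ has $(x,t)$-support of size at most $\e\le r$, so $w_\e$ is well defined on $Q_{2r}(x_0,t_0)$ using only values of $u_0$ on the enlarged cylinder $\mathcal{Q}^*:=B(x_0,3r)\times(t_0-5r^2,t_0+r^2)$ where the equation $(\partial_t+\mathcal{L}_\e)u_\e=(\partial_t+\mathcal{L}_0)u_0$ holds. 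Proposition~\ref{e-2} then gives $(\partial_t+\mathcal{L}_\e)w_\e=\e\,\mathrm{div}(F_\e)$ on $Q_{2r}(x_0,t_0)$.

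First I would fix some $p>d+2$ and apply Theorem~\ref{L-theorem} to $w_\e$ on $Q_r(x_0,t_0)$ to get
\begin{equation*}
\|w_\e\|_{L^\infty(Q_r)}\le C\Bigl(\fint_{Q_{2r}}|w_\e|^2\Bigr)^{1/2}+C\e r\Bigl(\fint_{Q_{2r}}|F_\e|^p\Bigr)^{1/p}.
\end{equation*}
Using the identity $u_\e-u_0=w_\e+\e\chi^\e S_\e(\nabla u_0)+\e^2\phi^\e S_\e(\nabla^2 u_0)$ together with the $L^\infty$ bounds on $\chi,\phi$ from Lemmas~\ref{p-1} and~\ref{p-10} and the trivial pointwise bound $|S_\e g(x,t)|\le\|g\|_{L^\infty(B(x,\e)\times(t-\e^2,t+\e^2))}$, one replaces $w_\e$ by $u_\e-u_0$ both in $\|\cdot\|_{L^\infty(Q_r)}$ and inside $(\fint_{Q_{2r}}|\cdot|^2)^{1/2}$, picking up corrector contributions of size $\e\|\nabla u_0\|_{L^\infty(\mathcal{Q}^*)}$ and $\e^2\|\nabla^2 u_0\|_{L^\infty(\mathcal{Q}^*)}\le\e r\|\nabla^2 u_0\|_{L^\infty(\mathcal{Q}^*)}$. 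For the $F_\e$ term I would work in $L^p$ rather than pointwise, since in the $\mathrm{VMO}_x$ setting $\nabla\phi$ belongs to $L^q_{\mathrm{loc}}$ for all finite $q$ but not to $L^\infty$. Examining the six terms of (\ref{e-4}) using the identities $\nabla S_\e=S_\e\nabla$ and $\partial_t S_\e(\nabla u_0)=\nabla S_\e(\partial_t u_0)$, the bound $\int|\nabla\theta_\e|\le C\e^{-1}$ (to control the fourth and sixth terms), the periodic rescaling estimate $(\fint_{Q_{2r}}|(\nabla\phi)^\e|^p)^{1/p}\le C\|\nabla\phi\|_{L^p(Y)}$, and the pointwise analog of Lemma~\ref{lemma-S-2}
\begin{equation*}
|S_\e(\nabla u_0)(x,t)-\nabla u_0(x,t)|\le C\e\bigl(\|\nabla^2 u_0\|_{L^\infty(B(x,\e)\times(t-\e^2,t+\e^2))}+\|\partial_t u_0\|_{L^\infty(B(x,\e)\times(t-\e^2,t+\e^2))}\bigr),
\end{equation*}
which follows by reading the proof of that lemma pointwise rather than in $L^p$, one arrives at
\begin{equation*}
\Bigl(\fint_{Q_{2r}}|F_\e|^p\Bigr)^{1/p}\le C\bigl(\|\nabla^2 u_0\|_{L^\infty(\mathcal{Q}^*)}+\|\partial_t u_0\|_{L^\infty(\mathcal{Q}^*)}\bigr).
\end{equation*}

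Finally, to insert the weight, I would use that $\psi$ is Lipschitz on $\R^d$ and every spatial point that appears lies within $B(x_0,3r)$, so any ratio $e^{\psi(x)-\psi(y)}$ with $x\in B(x_0,r)$ and $y\in B(x_0,3r)$ is bounded by $e^{Cr\|\nabla\psi\|_\infty}$; multiplying the unweighted estimate through by $e^{\psi(x)}$ and pulling $e^\psi$ inside each norm on the right costs only this factor, and absorbing the universal constant into $C$ gives exactly the $e^{3r\|\nabla\psi\|_\infty}$ prefactor of (\ref{w-1-9.0}). The main obstacle I anticipate is the bookkeeping of the nested regions $Q_r\subset Q_{2r}\subset\mathcal{Q}^*$, with $S_\e$ extending each by a further $\e\le r$, combined with the term-by-term weighted analysis of $F_\e$ (in particular the $\nabla\phi$ term, which must be handled through its $L^p$-norm on $Y$ rather than an $L^\infty$ bound); once this is done, the three resulting contributions match the three terms on the right-hand side of (\ref{w-1-9.0}).
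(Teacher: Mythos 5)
Your argument follows the paper's own route: form the two-scale expansion $w_\e$ from (\ref{w}), invoke Proposition \ref{e-2} to get $(\partial_t+\mathcal{L}_\e)w_\e=\e\,\mathrm{div}(F_\e)$, apply the uniform interior $L^\infty$ estimate of Theorem \ref{L-theorem}, convert back to $u_\e-u_0$ using the $L^\infty$ bounds on $\chi,\phi$ from Lemmas \ref{p-1} and \ref{p-10} together with the size-$\e$ support of $\theta_\e$, bound $F_\e$ term by term using the same ingredients that go into the proof of Theorem \ref{e-10} (treating the $(\nabla\phi)^\e$ term in $L^p$, as you correctly observe must be done under $\mathrm{VMO}_x$), and finally insert the weight through the Lipschitz bound on $\psi$. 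In substance this is the paper's proof, merely re-deriving the content of Theorem \ref{e-10} rather than citing it.

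One step is stated imprecisely. When you compare $e^{\psi(x)}$ for $x\in B(x_0,r)$ against $e^{\psi(y)}$ for $y\in B(x_0,3r)$, you have $|x-y|\le 4r$, so your argument produces a factor $e^{4r\|\nabla\psi\|_\infty}$, and a constant sitting inside an exponent cannot be ``absorbed into $C$'': an estimate with $e^{4r\|\nabla\psi\|_\infty}$ is not of the form $C\,e^{3r\|\nabla\psi\|_\infty}$. The paper gets the smaller constant by first inserting the weight into the $F_\e$ integral over $Q_{2r}$ (cost $e^{2r\|\nabla\psi\|_\infty}$) and only then enlarging the domain by $\e\le r$ through the \emph{weighted} Theorem \ref{e-10} (additional cost $e^{\e\|\nabla\psi\|_\infty}\le e^{r\|\nabla\psi\|_\infty}$). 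The discrepancy is harmless in the end — in the proof of Theorem \ref{main-theorem-1} the paper itself only keeps track of a generic $e^{cr\|\nabla\psi\|_\infty}$, and what matters downstream is that the exponent is $O(r\|\nabla\psi\|_\infty)$ so that it can be beaten by the Gaussian after optimizing in $\gamma$ — but you should not assert that you recover the literal constant $3$ in (\ref{w-1-9.0}).
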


\begin{proof}
Let $w_\e$ be defined by (\ref{w}).
Then $(\partial_t +\mathcal{L}_\e)w_\e=\e\, \text{\rm div}(F_\e)$ in $Q_{2r}(x_0, t_0)$,
where $F_\e$ is given by (\ref{e-4}).
It follows by Theorem \ref{L-theorem} that
\begin{equation}\label{w-1-10}
\|w_\e\|_{L^\infty(Q_r(x_0, t_0))}
\le C \left\{ \left(\fint_{Q_{2r} (x_0, t_0)} |w_\e|^2\right)^{1/2}
+\e r \left(\fint_{Q_{2r} (x_0, t_0)} |F_\e|^p\right)^{1/p}\right\},
\end{equation}
where $p>d+2$.
This leads to 
$$
\aligned
\|u_\e -u_0\|_{L^\infty(Q_r(x_0, t_0))}
  \le & C  \left(\fint_{Q_{2r} (x_0, t_0)} | u_\e-u_0|^2\right)^{1/2}
 + C \e r
 \left(\fint_{Q_{2r} (x_0, t_0)} |F_\e|^p\right)^{1/p}\\
& + C \e   \| S_\e (\nabla u_0)\|_{L^\infty(Q_{2r}(x_0, t_0))}
 +C \e^2   \|S_\e (\nabla^2 u_0)\|_{L^\infty (Q_{2r}(x_0, t_0))},
\endaligned
$$
where we have used the boundedness of $\chi$ and $\phi$ in Lemmas \ref{p-1} and \ref{p-10}.
Hence, using $|\psi (x)-\psi (y)|\le 2r\|\nabla \psi\|_\infty$ for $x, y\in B(x_0, 2r)$, we obtain
\begin{equation}\label{w-1-10.1}
\aligned
\|e^{\psi}(u_\e -u_0)\|_{L^\infty(Q_r(x_0, t_0))}
  \le & C e^{2r\|\nabla \psi\|_\infty}  \left(\fint_{Q_{2r} (x_0, t_0)} |e^\psi (u_\e-u_0)|^2\right)^{1/2}\\
& + C\e  re^{2r \|\nabla \psi\|_\infty}
 \left(\fint_{Q_{2r} (x_0, t_0)} |e^\psi F_\e|^p\right)^{1/p}\\
& + C \e  e^{2r\|\nabla \psi\|_\infty} \|e^\psi S_\e (\nabla u_0)\|_{L^\infty(Q_{2r}(x_0, t_0))}\\
& +C \e^2  e^{2r\|\nabla \psi\|_\infty} \|e^\psi S_\e (\nabla^2 u_0)\|_{L^\infty (Q_{2r}(x_0, t_0))}.
\endaligned
\end{equation}
Finally, we use Theorem \ref{e-10} to bound the second term in the
RHS of (\ref{w-1-10.1}).
This yields 
$$
\aligned
& \|e^{\psi}(u_\e -u_0)\|_{L^\infty(Q_r(x_0, t_0))}\\
   &\le  C e^{2r\|\nabla \psi\|_\infty}  \left(\fint_{Q_{2r} (x_0, t_0)} |e^\psi (u_\e-u_0)|^2\right)^{1/2}\\
& \quad + C\e  re^{3r \|\nabla \psi\|_\infty}
 \left(\fint_{t_0-5r^2}^{t_0+r^2}\fint_{B(x_0, 3r)}
 \Big\{ |e^\psi \nabla^2 u_0|^p 
 +|e^\psi \partial_t u_0|^p \Big\} \right)^{1/p}\\
& \quad+ C \e  e^{2r\|\nabla \psi\|_\infty} \|e^\psi S_\e (\nabla u_0)\|_{L^\infty(Q_{2r}(x_0, t_0))}
+C \e^2  e^{2r\|\nabla \psi\|_\infty} \|e^\psi S_\e (\nabla^2 u_0)\|_{L^\infty (Q_{2r}(x_0, t_0))},
\endaligned
$$
where $p>d+2$ and we also used the assumption $\e\le r$.
Estimate (\ref{w-1-9.0}) now follows.
\end{proof}

We are now in a position to give the proof of Theorem \ref{main-theorem-1}.

\begin{proof}[\bf Proof of Theorem \ref{main-theorem-1}]
We begin by fixing $x_0, y_0\in \R^{d+1}$ and $s_0<t_0$.
We may assume that 
$$
\e< r=(t_0-s_0)^{1/2}/100.
$$
 For otherwise the desired estimate (\ref{f-0-3}) follows
directly from (\ref{f-0-1}).

For $f\in C_0^\infty (Q_{r} (y_0, s_0); \R^m)$, define
$$
\aligned
u_\e (x, t) & =\int_{-\infty}^t \int_{\R^d} e^{-\psi(y)}\Gamma_\e (x, t; y, s) f(y, s)\, dyds,\\
u_0 (x, t) &=\int_{-\infty}^t \int_{\R^d} e^{-\psi(y)}\Gamma_0 (x, t; y, s)  f(y, s) \, dyds,
\endaligned
$$
where $\psi$ is a bounded  Lipschitz function in $\R^d$ to be chosen later.
Then
$$
(\partial_t +\mathcal{L}_\e) u_\e = (\partial_t +\mathcal{L}_0) u_0 =e^{-\psi} f \quad \text{ in } \R^{d+1}
$$
and $u_\e (x, t)=u_0 (x, t)=0$ if $t\le s_0$.
Let $w_\e$ be defined by (\ref{w}).
It follows from Lemma \ref{w-1-6} and Theorem \ref{w-0-2} that
\begin{equation}\label{main-t}
\aligned
\int_{\R^d} |w_\e (x, t)|^2 e^{2\psi (x)}\, dx
\le C \e^2 e^{2\e \|\nabla \psi\|_\infty + \kappa (t-s_0+\e^2)\|\nabla \psi\|_\infty^2}
\int_{s_0}^{t+\e^2}\!\! \!\int_{\R^d} |f|^2\, dx ds
\endaligned
\end{equation}
for any $t>s_0$.

Next, we use (\ref{w-1-9.0}) to obtain 
\begin{equation}\label{w-1-15}
\aligned
& \|e^\psi (u_\e-u_0)\|_{L^\infty(Q_r (x_0, t_0))}\\
& \le C e^{3r\|\nabla \psi\|_\infty}
 \left(\fint_{Q_{2r} (x_0, t_0)} |e^\psi w_\e|^2\right)^{1/2}\\
& \quad+C \e r e^{3r \|\nabla \psi\|_\infty}
\| e^\psi (|\nabla^2 u_0|  +|\partial_t u_0|) \|_{L^\infty( B(x_0, 3r)\times (t_0-5r^2, t_0 +r^2))}\\
& \quad+ C \e  e^{3r\|\nabla \psi\|_\infty} \| e^\psi \nabla u_0\|_{L^\infty( B(x_0, 3r)\times (t_0-5r^2, t_0 +r^2))}.
\endaligned
\end{equation}
Since supp$(f)\subset Q_{r} (y_0, s_0)$, it follows from the estimate (\ref{f-const}) 
for $\Gamma_0 (x, t; y, s)$ that
\begin{equation}\label{w-1-16}
\aligned
 |\nabla^2 u_0 (x, t)| &
+|\partial_t u_0 (x, t)| + r^{-1}|\nabla u_0 (x, t)|\\
& \le C \exp\left\{-\frac{\kappa |x_0-y_0|^2}{ t_0-s_0}\right\}
\fint_{Q_{r}(y_0, s_0)} |fe^{-\psi}|\, dyds
\endaligned
\end{equation}
for any $x\in B(x_0, 3r)$ and $|t-t_0|\le 5 r^2$, where $\kappa>0$ depends only on $\mu$.
Thus, by (\ref{w-1-15}), we obtain 
\begin{equation}\label{w-1-22}
\aligned
& \| e^\psi (u_\e -u_0)\|_{L^\infty(Q_{r}(x_0, t_0))}\\
&\le C e^{3r \|\nabla \psi\|_\infty}
 \left(\fint_{Q_{2r}(x_0, t_0)} |e^\psi w_\e|^2\right)^{1/2}\\
&\qquad +  \e r e^{c(|x_0-y_0|+r)\|\nabla \psi\|_\infty}
\exp\left\{-\frac{\kappa |x_0-y_0|^2}{t_0-s_0 }\right\}
\fint_{Q_{r}(y_0, s_0)} |f|\, dyds\\
&\le C \e r e^{c r \|\nabla \psi\|_\infty}
\left\{ e^{cr^2 \|\nabla \psi\|_\infty^2}
+ e^{c|x_0-y_0|\|\nabla \psi\|_\infty}
\exp\left\{-\frac{\kappa |x_0-y_0|^2}{t_0-s_0}\right\}\right\}\\
&\qquad\qquad
\cdot \left(\fint_{Q_{r} (y_0, s_0)} |f|^2 \right)^{1/2},
\endaligned
\end{equation}
where we have used (\ref{main-t}) for the last step.
By duality this implies that
\begin{equation}\label{w-1-23}
\aligned
&\left(\fint_{Q_{r}(y_0, s_0)} 
\big| e^{\psi (x)-\psi(y)}
\big( \Gamma_\e (x, t,; y, s) -\Gamma_0 (x, t; y, s) \big) \big|^2\, dyds\right)^{1/2}\\
&\le 
 C \e r^{-d-1} e^{c r \|\nabla \psi\|_\infty}
 \left\{ e^{cr^2 \|\nabla \psi\|_\infty^2}
+ e^{c|x_0-y_0|\|\nabla \psi\|_\infty}
\exp\left\{-\frac{\kappa |x_0-y_0|^2}{t_0-s_0}\right\}\right\}
\endaligned
\end{equation}
for any $(x, t)\in Q_{r}(x_0, t_0)$.

To deduce the $L^\infty$ bound for
$$
e^{\psi (x)-\psi (y)} \big(\Gamma_\e (x, t; y, s)-\Gamma_0 (x, t; y, s)\big)
$$
from its $L^2$ bound in (\ref{w-1-23}), we apply Theorem \ref{w-1-9} (with $\psi$ replaced by $-\psi$ and
$A$ replaced by $\widetilde{A}=\widetilde{A} (y, s)=A^*(y, -s)$) to the functions
$$
v_\e (y, s)=\Gamma_\e (x_0, t_0; y, -s) \quad
\text{ and } \quad v_0 (y,s)=\Gamma_0 (x_0, t_0, y, -s).
$$
Note that  $(\partial_t +\widetilde{\mathcal{L}}_\e)v_\e= (\partial_t +\widetilde{\mathcal{L}}_0)v_0=0$ in
$B(y_0, 3r)\times (-s_0-5r^2, -s_0+r^2)$.
Since $\widetilde{A}$ satisfies the same conditions as $A$, we obtain 
\begin{equation}\label{w-1-25}
\aligned
& |e^{\psi (x_0)-\psi (y_0)} (v_\e (y_0,- s_0)- v_0(y_0, -s_0))|\\
 & \le C e^{3r\|\nabla \psi\|_\infty} 
\left(\fint_{Q_r (y_0, -s_0)} |e^{\psi (x_0)-\psi (y)} (v_\e -v_0)|^2\, dy ds \right)^{1/2}\\
&\qquad + C \e r^{-d-1} e^{cr \|\nabla \psi\|_\infty} e^{\psi (x_0) -\psi (y_0)}
\exp\left\{-\frac{\kappa |x_0-y_0|^2}{t_0-s_0}\right\}\\
& = C e^{3r\|\nabla \psi\|_\infty} 
\left(\fint_{Q_r (y_0, s_0+r^2)} |e^{\psi (x_0)-\psi (y)} 
(\Gamma_\e (x_0, t_0; y, s)  -\Gamma_0 (x_0, t_0; y, s))|^2\, dy ds \right)^{1/2}\\
&\qquad + C \e r^{-d-1} e^{cr \|\nabla \psi\|_\infty} e^{\psi (x_0) -\psi (y_0)}
\exp\left\{-\frac{\kappa |x_0-y_0|^2}{t_0-s_0}\right\}\\
&\le 
 C \e r^{-d-1} e^{c r \|\nabla \psi\|_\infty}
 \left\{ e^{cr^2 \|\nabla \psi\|_\infty^2}
+ e^{c|x_0-y_0|\|\nabla \psi\|_\infty}
\exp\left\{-\frac{\kappa |x_0-y_0|^2}{t_0-s_0}\right\}\right\},
\endaligned
\end{equation}
where we have used (\ref{w-1-23}) for the last inequality.

Finally, as in \cite{Hofmann-2004, Dong-2008}, we let $\psi (y)= \gamma \psi_0 (|y-y_0|)$, where $\gamma\ge 0$ is to be chosen,
$\psi_0 (\rho)=\rho$ if $ \rho\le |x_0-y_0|$, and 
$\psi_0(\rho)=|x_0-y_0|$ if $\rho> |x_0-y_0|$.
Note that $\|\nabla \psi\|_\infty=\gamma$ and
$\psi (x_0)-\psi(y_0)=\gamma |x_0-y_0|$.
It follows from (\ref{w-1-25}) that
\begin{equation}\label{w-1-26}
\aligned
& |\Gamma_\e (x_0, t_0; y_0, s_0) -\Gamma_0 (x_0, t_0; y_0, s_0)|\\
&\le C \e r^{-d-1}
e^{-\gamma |x_0-y_0| +c \gamma  \sqrt{t_0-s_0}}
\left\{ e^{c \gamma^2 (t_0-s_0)}
+e^{c\gamma |x_0-y_0|} 
\exp\left\{-\frac{\kappa |x_0-y_0|^2}{t_0-s_0}\right\}\right\},
\endaligned
\end{equation}
where $c>0$ depends at most on $\mu$.
If $|x_0 -y_0|\le 2c  \sqrt{t_0-s_0}$, we may simply choose $\gamma=0$.
This gives
$$
\aligned
 |\Gamma_\e (x_0, t_0; y_0, s_0) -\Gamma_0 (x_0, t_0; y_0, s_0)
& \le C \e r^{-d-1}\\
&\le C \e (t_0-s_0)^{-\frac{d+1}{2}}\exp\left\{-\frac{\kappa |x_0-y_0|^2}{t_0-s_0}\right\}.
\endaligned
$$
If $|x_0-y_0|> 2c\sqrt{t_0-s_0}$, we choose
$$
\gamma=\frac{\delta |x_0-y_0|}{t_0-s_0}.
$$
Note that
$$
\aligned
 &-\gamma |x_0-y_0| +c\gamma \sqrt{t_0-s_0} +c\gamma^2 (t_0-s_0)\\
& =-\delta (1-c\delta) \frac{ |x_0-y_0|^2}{t_0-s_0} 
+ c\delta \frac{|x_0-y_0|}{\sqrt{t_0-s_0}}\\
&\le \Big\{ -\delta (1-c\delta) +(1/2)\delta  \Big\}
\frac{|x_0-y_0|^2}{t_0-s_0}\\
&\le \frac{-\delta |x_0-y_0|^2}{4(t_0-s_0)},
\endaligned
$$
if $ \delta \le (1/4)c^{-1}$. Also, observe that
$$
\aligned
 c\gamma \sqrt{t_0-s_0} +c\gamma |x_0 -y_0| -\frac{\kappa |x_0-y_0|^2}{t_0-s_0}
&\le \Big\{ (1/2) \delta +c\delta -\kappa \Big\} \frac{|x_0-y_0|^2}{t_0-s_0}\\
& \le -\frac{\kappa |x_0-y_0|^2}{2(t_0-s_0)},
\endaligned
$$
if $\delta\le (1/2)(c+1/2)^{-1}\kappa$. Recall that $r=(100)^{-1}\sqrt{t_0-s_0}$.
As a result, we have proved that there exists $\kappa_1>0$, depending only on $\mu$, such that
$$
|\Gamma_\e (x_0, t_0; y_0, s_0)-\Gamma_0 (x_0, t_0; y_0, s_0)|
\le \frac{C \e}{(t_0-s_0)^{\frac{d+1}{2}}}
\exp \left\{ -\frac{\kappa_1 |x_0-y_0|^2}{t_0-s_0}\right\}.
$$
This completes the proof of Theorem \ref{main-theorem-1}.
\end{proof}

%%%%%%%%%%%%%%%%%%%%%%%%%%%%%%%%%%%

%%%%%%%%%%%%%%%%%%%%%%%%%%%%%%%%%%%

\section{Proof of Theorem \ref{main-theorem-2}}\label{section-6}

Define
$$
\| F\|_{C^{\lambda, 0} (K)} =\sup \left\{ \frac{|F(x, t)-F(y, t)|}{|x-y|^\lambda}: \
(x, t), (y, t)\in K \text{ and } x\neq y \right\}.
$$
The proof of Theorem \ref{main-theorem-2} relies on the following Lipschitz estimate.

\begin{thm}\label{w-Lip}
Assume  that $A$ satisfies conditions (\ref{ellipticity}), (\ref{periodicity}) and (\ref{H}).
Suppose that 
$$
(\partial_t +\mathcal{L}_\e)u_\e =(\partial_t +\mathcal{L}_0)u_0
$$
in $Q_{2r} (x_0, t_0)$ for some $(x_0, t_0)\in \R^{d+1}$ and $\e \le r <\infty$.
Then
\begin{equation}\label{w-Lip-1}
\aligned
& \| \nabla u_\e -\nabla u_0 -(\nabla\chi)^\e \nabla u_0\|_{L^\infty (Q_r(x_0, t_0))}\\
&\le C  r^{-1} 
\left(\fint_{Q_{2r} (x_0, t_0)} |u_\e -u_0|^2 \right)^{1/2}
+C \e r^{-1} \|\nabla u_0\|_{L^\infty(Q_{2r} (x_0, t_0))}\\
& \qquad
+C \e \ln  \big[ \e^{-1} r +2\big]  
\|  |\nabla^2 u_0|
+\e |\partial_t \nabla u_0|
+\e |\nabla^3 u_0| \|_{L^\infty (Q_{2r} (x_0, t_0))} \\
&\qquad +  C \e^{1+\lambda}
\|  |\nabla^2 u_0|
+ \e |\partial_ t \nabla u_0|
+\e  |\nabla^3 u_0| \|_{C^{\lambda, 0} (Q_{2r} (x_0, t_0))},
\endaligned
\end{equation}
where $C$ depends only on $d$, $m$, $\mu$ and $(\lambda, \tau)$ in (\ref{H}).
\end{thm}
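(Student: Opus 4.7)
The plan is to run the two-scale expansion in a gradient form, reducing everything to controlling the gradient of the remainder $w_\varepsilon$ from Section \ref{section-3}. A direct calculation gives
\begin{equation*}
\nabla u_\varepsilon - \nabla u_0 - (\nabla \chi)^\varepsilon \nabla u_0
= \nabla w_\varepsilon + (\nabla \chi)^\varepsilon \bigl(S_\varepsilon(\nabla u_0) - \nabla u_0\bigr) + \varepsilon \chi^\varepsilon \nabla S_\varepsilon(\nabla u_0) + \varepsilon (\nabla \phi)^\varepsilon \nabla S_\varepsilon(\nabla u_0) + \varepsilon^2 \phi^\varepsilon \nabla^2 S_\varepsilon(\nabla u_0),
\end{equation*}
where $\chi, \nabla\chi, \phi, \nabla \phi$ are all bounded on $Y$ (Lemmas \ref{p-1}, \ref{p-10} and Remark \ref{re-2.0}). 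A pointwise version of Lemma \ref{lemma-S-2} bounds the ``auxiliary'' terms on the right by a constant multiple of $\varepsilon (\|\nabla^2 u_0\|_\infty + \varepsilon \|\partial_t \nabla u_0\|_\infty + \varepsilon \|\nabla^3 u_0\|_\infty)$ on $Q_{2r}$, which already sits below the RHS of (\ref{w-Lip-1}). Thus the whole problem reduces to estimating $\|\nabla w_\varepsilon\|_{L^\infty(Q_r)}$, with $(\partial_t + \mathcal{L}_\varepsilon) w_\varepsilon = \varepsilon \operatorname{div}(F_\varepsilon)$ in $Q_{2r}$ by Proposition \ref{e-2}.

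To treat the div-form source, I would split $w_\varepsilon = W_1 + W_2$ on $Q_{3r/2}(x_0,t_0)$, where
\begin{equation*}
W_2(x,t) = -\varepsilon \iint_{Q_{3r/2}(x_0,t_0)} \nabla_y \Gamma_\varepsilon(x,t;y,s) \cdot F_\varepsilon(y,s)\, dy\, ds
\end{equation*}
is the Cauchy solution with source $\varepsilon \operatorname{div}(F_\varepsilon \mathbf{1}_{Q_{3r/2}})$ and zero initial data, and $W_1 = w_\varepsilon - W_2$ is $(\partial_t + \mathcal{L}_\varepsilon)$-harmonic in $Q_{3r/2}$. For $W_1$, the uniform Lipschitz estimate of Theorem \ref{Lip-theorem} (applied in the homogeneous case with $F = 0$) gives $\|\nabla W_1\|_{L^\infty(Q_r)} \le C r^{-1} (\fint_{Q_{3r/2}} |W_1|^2)^{1/2}$, and expanding the definition of $w_\varepsilon$ together with the Gaussian bound on $\nabla_y \Gamma_\varepsilon$ from Theorem \ref{D-bound} converts the $L^2$ norm of $W_1$ into the first two terms on the RHS of (\ref{w-Lip-1}).

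The hard part is controlling $\nabla W_2$. Differentiating under the integral and applying the Gaussian estimate (\ref{D-2}) for $\nabla_x \nabla_y \Gamma_\varepsilon$ yields
\begin{equation*}
|\nabla W_2(x,t)| \le C\varepsilon \iint_{Q_{3r/2}} (t-s)^{-(d+2)/2} \exp\!\left(-\tfrac{\kappa |x-y|^2}{t-s}\right) |F_\varepsilon(y,s)|\, dy\, ds.
\end{equation*}
The $y$-integration gives $(t-s)^{-1}$, and integrating this in $s$ over $(t-r^2,t)$ is logarithmically divergent at $s = t$. I would therefore split the $s$-integration at $|t-s| = \varepsilon^2$. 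On $\{|t-s| \ge \varepsilon^2\}$ a crude $L^\infty$ bound on $F_\varepsilon$ (read off from (\ref{e-4}) using boundedness of the correctors) yields $C\varepsilon \log(\varepsilon^{-1} r + 2) \|\,|\nabla^2 u_0| + \varepsilon|\partial_t \nabla u_0| + \varepsilon |\nabla^3 u_0|\,\|_{L^\infty(Q_{2r})}$. On $\{|t-s| < \varepsilon^2\}$, I would subtract $F_\varepsilon(x,t)$ inside the integral, writing $F_\varepsilon(y,s) = F_\varepsilon(x,t) + (F_\varepsilon(y,s) - F_\varepsilon(x,t))$. The constant piece produces an integral $\iint \nabla_x\nabla_y\Gamma_\varepsilon\, dy\,ds$ that is harmless by standard cancellation, while the H\"older-in-$x$ continuity of $F_\varepsilon$ (which follows from the hypothesis (\ref{H}), together with the H\"older regularity of $\chi, \phi, \nabla\chi, \nabla\phi$ noted in Remark \ref{re-2.0}) converts the Gaussian kernel singularity into a convergent integral, producing the $\varepsilon^{1+\lambda}\|F\|_{C^{\lambda,0}}$--type term on the RHS of (\ref{w-Lip-1}).

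The main obstacle is this last step: quantifying the cancellation on the small-time region $|t-s|<\varepsilon^2$ and tracking the exact dependence on the H\"older norms of $F_\varepsilon$ to produce the final $\varepsilon^{1+\lambda}$ correction and the sharp $\log(\varepsilon^{-1} r + 2)$ factor. Everything else (boundedness of correctors, smoothing lemmas, Gaussian bounds, homogeneous Lipschitz estimates) has already been recorded in Sections \ref{section-2} and \ref{section-3}.
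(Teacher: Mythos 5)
Your overall plan follows the paper's at a high level (Duhamel-type representation for the remainder, split into a ``regular'' piece and a singular-kernel piece, near/far decomposition with a H\"older cancellation on the near region), but the key technical move is off, and the discrepancy matters.

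The paper does not truncate the integration domain; it multiplies $w_\e$ by a cut-off $\varphi$ supported in $Q_{7r/4}(x_0,t_0)$ and represents $\varphi w_\e$ via the whole-space Duhamel formula. The crucial step is then to subtract $F_\e(x,\emph{s})$ --- \emph{same time variable $s$ as the integration variable}, not $F_\e(x,t)$ --- so that
\[
J(x,t)=-\e\int\nabla_y\{\Gamma_\e(x,t;y,s)\varphi(y,s)\}\bigl(F_\e(y,s)-F_\e(x,s)\bigr)\,dyds + \text{(boundary term)},
\]
using the exact identity $\int_{\R^d}\nabla_y\{\Gamma_\e(x,t;y,s)\varphi(y,s)\}\,dy=0$ (valid because $\Gamma_\e\varphi$ has compact $y$-support). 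This cancellation is exact, and it also kills the term produced when $\nabla_x$ hits the subtracted $F_\e(x,s)$. More importantly, the difference that appears, $|F_\e(y,s)-F_\e(x,s)|$, involves only the space variable, so it is bounded by $|x-y|^\lambda\|F_\e\|_{C^{\lambda,0}}$ --- precisely the space-only seminorm appearing in (\ref{w-Lip-1}).

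Your proposal truncates the domain to $Q_{3r/2}$ (so the cancellation $\iint\nabla_x\nabla_y\Gamma_\e\,dyds=0$ is \emph{not} exact; you pick up Gaussian-tail boundary contributions that must be quantified) and, more seriously, subtracts $F_\e(x,t)$. On the near region $|t-s|<\e^2$ you must then control $|F_\e(y,s)-F_\e(x,t)|$, which requires H\"older continuity in \emph{time} as well as in space. But $F_\e$ contains fast factors such as $\chi(x/\e,t/\e^2)$ that oscillate on time scale $\e^2$: for $|t-s|\sim\e^2$ the time increment $|F_\e(x,s)-F_\e(x,t)|$ is $O(1)$, not $O(\e^\lambda)$. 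The space-only seminorm $\|\cdot\|_{C^{\lambda,0}}$ in (\ref{w-Lip-1}) gives no control over this, so your decomposition does not yield the stated bound. The paper's choice of subtracting $F_\e(x,s)$ is therefore not cosmetic; it is what makes the $C^{\lambda,0}$ seminorm (rather than a parabolic H\"older seminorm) sufficient. A smaller deviation: you keep the smoothing operator $S_\e$ in the two-scale expansion, which generates the extra ``auxiliary'' terms you then have to dispatch by a pointwise analogue of Lemma \ref{lemma-S-3}; the paper simply takes $S_\e$ to be the identity at this stage (the mollifier was only needed for the $L^2$ argument behind Theorem \ref{main-theorem-1}), which shortens the argument.
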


\begin{proof}
Let
\begin{equation}\label{w-Lip-2}
w_\e =u_\e -u_0 -\e \chi_j^\e  \frac{\partial u_0}{\partial x_j} -\e^2 \phi^\e_{(d+1)ij} \frac{\partial^2 u_0}{\partial x_i\partial x_j},
\end{equation}
where $\chi_j^\e (x, t)=\chi_j (x/\e, t/\e^2)$ and $\phi_{(d+1)ij}^\e (x, t) = \phi_{(d+1)ij}(x/\e, t/\e^2)$.
It follows by Proposition 3.1 that
$ (\partial_t +\mathcal{L}_\e) w_\e = \e\, \text{\rm div} (F_\e)$ in $Q_{2r} (x_0, t_0)$,
where $F_\e$ is given by (\ref{e-4}) with $S_\e$ being the identity operator.
Choose a cut-off function $\varphi \in C(\R^{d+1})$ such that
$$
\left\{
\aligned
& 0\le \varphi\le 1, \ \ \varphi =1 \text{ in } Q_{3r/2} (x_0, t_0),\\
& \varphi (x, t)=0 \text{  if } |x-x_0|\ge (7r/4) \text{ or } t<t_0-(7r/4)^2, \\
& |\nabla \varphi|\le Cr^{-1}, \ |\nabla^2 \varphi| +|\partial_t \varphi|\le C r^{-2}.
\endaligned
\right.
$$
Using
$$
\aligned
(\partial_t +\mathcal{L}_\e)(\varphi w_\e)
&=(\partial_t \varphi) w_\e +\e\, \text{\rm div}(\varphi F_\e)
-\e F_\e (\nabla \varphi)\\
& \qquad-\text{\rm div} (A^\e (\nabla \varphi) w_\e)
-A^\e \nabla w_\e  (\nabla \varphi),
\endaligned
$$
where $A^\e (x, t)=A(x/\e, t/\e^2)$,
we may deduce that for any $(x, t)\in Q_r (x_0, t_0)$,
$$
\aligned
 w_\e (x, t)
&= \int_{-\infty}^t \int_{\R^d}  \Gamma_\e (x, t; y, s)
\big\{ (\partial_s \varphi ) w_\e
-\e F_\e (\nabla \varphi)
- A^\e \nabla w_\e (\nabla \varphi)\big\}\, dyds\\
&\qquad-
\int_{-\infty}^t \int_{\R^d} \nabla_y \Gamma_\e (x, t; y, s) 
\big\{ \e \varphi F_\e 
+ A^\e (\nabla \varphi) w_\e \big\}\, dyds\\
&=I(x, t) +J(x, t),
\endaligned
$$
where
$$
J(x, t)=-\e \int_{-\infty}^t \int_{\R^d} \nabla_y \Gamma_\e (x, t; y, s) \varphi (y, s) F_\e (y, s)\, dyds.
$$
Since $\varphi=1$ in $Q_{3r/2}(x_0, t_0)$,
we see that  for $(x, t)\in Q_r(x_0, t_0)$,
$$
\aligned
|\nabla I(x, t)|
&\le C \int_{-\infty}^t \int_{\R^d}
|\nabla_x \Gamma_\e (x, t; y, s)|
\big\{ |\partial_s \varphi| |w_\e|
+\e |F_\e| |\nabla \varphi|
+ |\nabla w_\e | |\nabla \varphi|\big\} dyds\\
&\qquad +C \int_{-\infty}^ t \int_{R^d} |\nabla_x\nabla_y \Gamma_\e (x, t; y, s)|
 |\nabla \varphi| |w_\e|\, dyds\\
&\le 
C\left\{
\frac{1}{r} \fint_{Q_{2r} (x_0, t_0)} |w_\e|
+\e \fint_{Q_{2r}(x_0, t_0)} |F_\e|
+ \fint_{Q_{7r/4}(x_0, t_0)} |\nabla w_\e| \right\}\\
&\le 
C\left\{
\frac{1}{r} \left(\fint_{Q_{2r} (x_0, t_0)} |w_\e|^2\right)^{1/2}
+\e \left(\fint_{Q_{2r}(x_0, t_0)} |F_\e|^2\right)^{1/2}\right\},
\endaligned
$$
where we have used (parabolic) Cacciopoli's inequality  for the last step.
In view of (\ref{e-4}) with $S_\e$ being the identity operator,
$$
|F_\e|\le C \big\{ |\nabla^2 u_0| +\e |\partial_t \nabla u_0| + \e |\nabla^3 u_0| \big\},
$$
where we have used the boundedness of $\nabla \phi$ (see Remark \ref{re-2.0}).
It follows that $\|\nabla I\|_{L^\infty (Q_r(x_0, t_0))}$ is bounded by the RHS of (\ref{w-Lip-1}).

Finally, to estimate $J(x, t)$, we write
$$
\aligned
J(x, t)
= &-\e \int_{-\infty}^t \int_{\R^d} \nabla_y \big\{ \Gamma_\e (x, t; y, s)\varphi (y, s)\big\}
\big(F_\e (y, s)-F_\e (x, s) \big)\, dyds\\
&\qquad
+\e \int_{-\infty}^t \int_{\R^d} \Gamma_\e (x, t; y, s) (\nabla \varphi) (y, s) F_\e (y, s)\, dyds.
\endaligned
$$
It follows that for $(x, t)\in Q_{r}(x_0, t_0)$,
\begin{equation}\label{6-10}
\aligned
|\nabla J(x, t)|
&\le \e \int_{Q_{2r} (x_0, t_0)} 
|\nabla_x\nabla_y \big\{\Gamma_\e (x, t; y, s)\varphi (y, s)\big\} |
|F_\e (y, s)-F_\e (x, s) |\, dyds\\
&\qquad + \e \int_{Q_{2r}(x_0, t_0)} 
|\nabla_x \Gamma_\e (x, t; y, s)| |\nabla \varphi (y, s)| |F_\e (y,s)|\, dyds\\
&\le  C \e \int_{Q_{2r} (x_0, t_0)}
\frac{|F_\e (y, s)-F_\e (x, s)|}{\left(|x-y| +|t-s|^{1/2}\right)^{d+2}}\, dyds
+ C \e \fint_{Q_{2r}(x_0, t_0)} |F_\e|
\endaligned
\end{equation}
To bound the first integral in the RHS of (\ref{6-10}), we subdivide   the domain
$Q_{2r} (x_0, t_0)$ into $Q_\e (x, t)$ and $Q_{2r} (x_0, t_0)\setminus Q_\e (x, t)$.
On $Q_{2r} (x_0, t_0)\setminus Q_\e (x, t)$, we use the bound
$$
|F_\e (y, s)-F_\e (x, s)|\le 2\| F_\e\|_{L^\infty(Q_{2r}(x_0, t_0))},
$$
while for $Q_\e (x, t)$, we use
$$
|F_\e (y, s)-F_\e (x, s)|\le |x-y|^\lambda  \| F\|_{C^{\lambda, 0} (Q_{2r}(x_0, t_0))}.
$$
This leads to 
$$
\aligned
|\nabla J(x, t)|
&\le C \e \ln \big[ \e^{-1} r +1\big] \|F_\e\|_{L^\infty(Q_{2r}(x_0, t_0))}
+ C \e^{1+\lambda} \|F_\e \|_{C^{\lambda, 0}(Q_{2r} (x_0, t_0))}\\
&\le C \e \ln  \big[ \e^{-1} r +1\big]  
\|  |\nabla^2 u_0|
+\e |\partial_t \nabla u_0|
+\e |\nabla^3 u_0| \|_{L^\infty (Q_{2r} (x_0, t_0))} \\
&\quad +  C \e^{1+\lambda}
\|  |\nabla^2 u_0|
+ \e |\partial_ t \nabla u_0|
+\e  |\nabla^3 u_0| \|_{C^{\lambda, 0} (Q_{2r} (x_0, t_0))}.
\endaligned
$$
Thus, in view of the estimate for $\nabla I (x, t)$, we have proved
that $\|\nabla w_\e\|_{L^\infty(Q_r(x_0, t_0))}$ is bounded by the RHS of (\ref{w-Lip-1}).
Since
$$
\|\nabla w_\e - \big\{ \nabla u_\e -\nabla u_0 -(\nabla \chi)^\e \nabla u_0\big\} \|_{L^\infty(Q_r(x_0, t_0))}
\le C \e \| |\nabla^2 u_0| +\e |\nabla^3 u_0| \|_{L^\infty(Q_r(x_0, t_0))},
$$
the estimate (\ref{w-Lip-1}) follows.
\end{proof}

To prove Theorem \ref{main-theorem-2},
we fix $x_0, y_0\in \R^d$ and $s_0<t_0$.
We may assume that $ \e < (t_0-s_0)/8$.
For otherwise the estimate (\ref{f-0-4}) follows directly from (\ref{D-1}).
We apply Theorem \ref{w-Lip} to the functions
$$
u_\e (x, t)=\Gamma_\e (x, t; y_0, s_0) \quad
\text{ and } \quad
u_0 (x, t)= \Gamma_0 (x, t; y_0, s_0)
$$
in $Q_{2r} (x_0, t_0)$, where $r= (t_0 -s_0)/8$.
Note that $(\partial_t +\mathcal{L}_\e)u_\e =(\partial_t +\mathcal{L}_0) u_0=0$
in $Q_{4r}(x_0, t_0)$.
To bound the first term in the RHS of (\ref{w-Lip-1}), we use the estimate (\ref{f-0-3})
in Theorem \ref{main-theorem-1}.
All other terms in the RHS of (\ref{w-Lip-1}) may be handled easily by 
using the estimates(\ref{f-const}) for $\Gamma_0(x, , t; y, s)$.
We leave the details to the reader.

%%%%%%%%%%%%%%%%%%%%%%%%%%%%%%%%%%%%%%%

%%%%%%%%%%%%%%%%%%%%%%%%%%%%%%%%%%%%%%%

\section{Proof of Theorem \ref{main-theorem-3}}\label{section-7}

To prove Theorem \ref{main-theorem-3},
we fix $x_0, y_0\in \R^d$ and $s_0<t_0$.
As before, we may assume that $\e<(t_0-s_0)/8$.
For otherwise the estimate (\ref{f-0-9}) follows directly from (\ref{D-2}).

Let $r=(t_0-s_0)/8$. Fix $1\le j\le d$ and $1\le \beta\le m$.
We apply Theorem \ref{w-Lip} to the functions $u_\e =(u_\e^\alpha)$ and
$u_0=(u_0^\alpha)$ in $Q_{2r} (x_0, t_0)$, where
$$
\aligned
u_\e^\alpha (x, t)  & =\frac{\partial }{\partial y_j} \Big\{ \Gamma_\e^{\alpha\beta} \Big\} (x, t; y_0, s_0)\\
u_0^\alpha (x, t)& =\frac{\partial}{\partial y_\ell} \Big\{ \Gamma_0^{\alpha\sigma} \Big\}
(x, t; y_0, s_0) \cdot \left\{ \delta^{\beta \sigma} \delta_{j\ell} 
+\frac{\partial }{\partial y_j} (\widetilde{\chi}_\ell^{\beta \sigma}) (y_0/\e, -s_0/\e^2) \right\},
\endaligned
$$
where $\widetilde{\chi}$ denotes the correctors for $\partial_t  +\widetilde{\mathcal{L}}_\e$.
Observe that $(\partial_t +\mathcal{L}_\e)u_\e =(\partial_t +\mathcal{L}_0) u_0=0$
in $Q_{4r}(x_0, t_0)$.
To bound the first term in the RHS of (\ref{w-Lip-1}),
we use the estimate (\ref{f-0-8}).
As in the proof of Theorem \ref{main-theorem-1},
all other terms in the RHS of (\ref{w-Lip-1}) may be handled readily by using estimate (\ref{f-const})
for $\Gamma_0(x, t; y, s)$.

 \bibliographystyle{amsplain}
 
\bibliography{Geng-Shen-2017.bbl}

\providecommand{\bysame}{\leavevmode\hbox to3em{\hrulefill}\thinspace}
\providecommand{\MR}{\relax\ifhmode\unskip\space\fi MR }
% \MRhref is called by the amsart/book/proc definition of \MR.
\providecommand{\MRhref}[2]{%
  \href{http://www.ams.org/mathscinet-getitem?mr=#1}{#2}
}
\providecommand{\href}[2]{#2}
\begin{thebibliography}{10}

\bibitem{Aronson}
D.~G. Aronson, \emph{Bounds for the fundamental solution of a parabolic
  equation}, Bull. Amer. Math. Soc. \textbf{73} (1967), 890--896.

\bibitem{Auscher}
P.~Auscher, \emph{Regularity theorems and heat kernel for elliptic operators},
  J. London Math. Soc. (2) \textbf{54} (1996), no.~2, 284--296.

\bibitem{AL-1991}
M.~Avellaneda and F.~Lin, \emph{{$L^p$} bounds on singular integrals in
  homogenization}, Comm. Pure Appl. Math. \textbf{44} (1991), no.~8-9,
  897--910.

\bibitem{BLP-2011}
A.~Bensoussan, J.-L. Lions, and G.~Papanicolaou, \emph{Asymptotic analysis for
  periodic structures}, AMS Chelsea Publishing, Providence, RI, 2011, Corrected
  reprint of the 1978 original [MR0503330].

\bibitem{Byun-2007}
S.~Byun, \emph{Optimal {$W^{1,p}$} regularity theory for parabolic equations in
  divergence form}, J. Evol. Equ. \textbf{7} (2007), no.~3, 415--428.

\bibitem{Dong-2008}
S.~Cho, H.~Dong, and S.~Kim, \emph{On the {G}reen's matrices of strongly
  parabolic systems of second order}, Indiana Univ. Math. J. \textbf{57}
  (2008), no.~4, 1633--1677.

\bibitem{Davies-1987}
E.~B. Davies, \emph{Explicit constants for {G}aussian upper bounds on heat
  kernels}, Amer. J. Math. \textbf{109} (1987), no.~2, 319--333.

\bibitem{Davies-1987a}
\bysame, \emph{Heat kernel bounds for second order elliptic operators on
  {R}iemannian manifolds}, Amer. J. Math. \textbf{109} (1987), no.~3, 545--569.

\bibitem{Fabes-1986}
E.~B. Fabes and D.~W. Stroock, \emph{A new proof of {M}oser's parabolic
  {H}arnack inequality using the old ideas of {N}ash}, Arch. Rational Mech.
  Anal. \textbf{96} (1986), no.~4, 327--338.

\bibitem{Geng-Shen-2015}
J.~Geng and Z.~Shen, \emph{Uniform regularity estimates in parabolic
  homogenization}, Indiana Univ. Math. J. \textbf{64} (2015), 697--733.

\bibitem{Geng-Shen-2017}
\bysame, \emph{Convergence rates in parabolic homogenization with
  time-dependent periodic coefficients}, J. Funct. Anal. \textbf{272} (2017),
  no.~5, 2092--2113.

\bibitem{Hofmann-2004}
S.~Hofmann and S.~Kim, \emph{Gaussian estimates for fundamental solutions to
  certain parabolic systems}, Publ. Mat. \textbf{48} (2004), no.~2, 481--496.

\bibitem{JKO-1994}
V.~V. Jikov, S.~M. Kozlov, and O.~A. Oleinik, \emph{Homogenization of
  differential operators and integral functionals}, Springer-Verlag, Berlin,
  1994, Translated from the Russian by G. A. Yosifian [G. A. Iosifyan].

\bibitem{KLS-2014}
C.~Kenig, F.~Lin, and Z.~Shen, \emph{Periodic homogenization of {G}reen and
  {N}eumann functions}, Comm. Pure Appl. Math. \textbf{67} (2014), no.~8,
  1219--1262.

\bibitem{KS-2011}
C.~Kenig and Z.~Shen, \emph{Layer potential methods for elliptic homogenization
  problems}, Comm. Pure Appl. Math. \textbf{64} (2011), no.~1, 1--44.

\bibitem{Krylov-2007}
N.~V. Krylov, \emph{Parabolic and elliptic equations with {VMO} coefficients},
  Comm. Partial Differential Equations \textbf{32} (2007), no.~1-3, 453--475.

\bibitem{Nash}
J.~Nash, \emph{Continuity of solutions of parabolic and elliptic equations},
  Amer. J. Math. \textbf{80} (1958), 931--954.

\end{thebibliography}

\bigskip

\begin{flushleft}
Jun Geng,
School of Mathematics and Statistics,
Lanzhou University,
Lanzhou, P.R. China.

E-mail:gengjun@lzu.edu.cn
\end{flushleft}

\begin{flushleft}
Zhongwei Shen,
Department of Mathematics,
University of Kentucky,
Lexington, Kentucky 40506,
USA.

E-mail: zshen2@uky.edu
\end{flushleft}

\bigskip

\medskip

\end{document}